\theoremstyle{plain}
\newtheorem{theorem}                {Theorem}      [section]
\newtheorem{proposition}  [theorem]  {Proposition}
\newtheorem{lemma}        [theorem]  {Lemma}
\theoremstyle{definition}
\newtheorem{example}      [theorem]  {Example}
\newtheorem{remark}       [theorem]  {Remark}
\newtheorem{definition}   [theorem]  {Definition}
\numberwithin{equation}{section}
\def \t{\mbox{${\mathbb T}$}}
\def \R{{\mathbb R}}
\def \s{{\mathbb S}}
\def \z{{\mathbb Z}}
\def \n{{\mathbb N}}
\def \C{{\mathbb C}}
\def \link {~}
\def \1 {\`}
\DeclareMathOperator{\trace}{trace}
\def \1{\mbox{${\mathbf 1}$}}
\def \r{\mbox{${\mathbb R}$}}
\def \s{\mbox{${\mathbb S}$}}
\def \n{\mbox{${\mathbb N}$}}
\DeclareMathOperator{\Index}{Index}
\DeclareMathOperator{\nul}{Nullity}
\numberwithin{equation}{section}
\begin{document}

\title[]{Index and nullity of proper biharmonic maps in spheres}

\author{S.~Montaldo}
\address{Universit\`a degli Studi di Cagliari\\
Dipartimento di Matematica e Informatica\\
Via Ospedale 72\\
09124 Cagliari, Italia}
\email{montaldo@unica.it}

\author{C.~Oniciuc}
\address{Faculty of Mathematics\\ ``Al.I. Cuza'' University of Iasi\\
Bd. Carol I no. 11 \\
700506 Iasi, ROMANIA}
\email{oniciucc@uaic.ro}

\author{A.~Ratto}
\address{Universit\`a degli Studi di Cagliari\\
Dipartimento di Matematica e Informatica\\
Via Ospedale 72\\
09124 Cagliari, Italia}
\email{rattoa@unica.it}

\begin{abstract}
In recent years, the study of the bienergy functional has attracted the attention of a large community of researchers, but there are not many examples where the second variation of this functional has been thoroughly studied. We shall focus on this problem and, in particular, we shall compute the exact index and nullity of some known examples of proper biharmonic maps. Moreover, we shall analyse a case where the domain is not compact. More precisely, we shall prove that a large family of proper biharmonic maps $\varphi: \R \to \s^2$ is strictly stable with respect to compactly supported variations.
In general, the computations involved in this type of problems are very long. For this reason, we shall also define and apply to specific examples a suitable notion of index and nullity with respect to equivariant variations. 
\end{abstract}

\subjclass[2000]{Primary: 58E20; Secondary: 53C43.}

\keywords{Biharmonic maps, second variation, index, nullity}

\thanks{The first and the last authors  were supported by Fondazione di Sardegna (project STAGE) and Regione Autonoma della Sardegna (Project KASBA); the second author was supported by a project funded by the Ministry of
Research and Innovation within Program 1 - Development of the national RD
system, Subprogram 1.2 - Institutional Performance - RDI excellence
funding projects, Contract no. 34PFE/19.10.2018.}
\maketitle

\section{Introduction}\label{intro}
{\it Harmonic maps} are the critical points of the {\em energy functional}
\begin{equation}\label{energia}
E(\varphi)=\frac{1}{2}\int_{M}\,|d\varphi|^2\,dv_M \,\, ,
\end{equation}
where $\varphi:M\to N$ is a smooth map from a compact Riemannian
manifold $(M,g)$ to a Riemannian
manifold $(N,h)$. In particular, $\varphi$ is harmonic if it is a solution of the Euler-Lagrange system of equations associated to \eqref{energia}, i.e.
\begin{equation}\label{harmonicityequation}
  - d^* d \varphi =   {\trace} \, \nabla d \varphi =0 \,\, .
\end{equation}
The left member of \eqref{harmonicityequation} is a vector field along the map $\varphi$ or, equivalently, a section of the pull-back bundle $\varphi^{-1} TN$: it is called {\em tension field} and denoted $\tau (\varphi)$. In addition, we recall that, if $\varphi$ is an \textit{isometric immersion}, then $\varphi$ is a harmonic map if and only if it defines a \textit{minimal submanifold} of $N$ (see \cite{EL1, EL83} for background).

A related topic of growing interest is the study of {\it biharmonic maps}: these maps, which provide a natural generalisation of harmonic maps, are the critical points of the {\it bienergy functional} (as suggested in \cite{EL83}, \cite{ES})
\begin{equation}\label{bienergia}
    E_2(\varphi)=\frac{1}{2}\int_{M}\,|d^*d\varphi|^2\,dv_M=\frac{1}{2}\int_{M}\,|\tau(\varphi)|^2\,dv_M\,\, .
\end{equation}
There have been extensive studies on biharmonic maps. We refer to \cite{Chen, Jiang, SMCO, Ou} for an introduction to this topic and to \cite{MOR2, Mont-Ratto2, Mont-Ratto3, Mont-Ratto6} for a collection of examples which shall be studied in this paper in the context of second variation.
We observe that, obviously, any harmonic map is trivially biharmonic and an absolute minimum for the bienergy. Therefore, we say that a biharmonic map is {\it proper} if it is not harmonic and, similarly, a biharmonic isometric immersion is {\it proper} if it is not minimal.
As a general fact, when the ambient has nonpositive sectional curvature there are several results which assert that, under suitable conditions, a biharmonic submanifold is minimal, but the Chen conjecture that any biharmonic submanifold of $\R^n$ must be minimal is still open (see \cite{Chen, Chen2}). 

The aim of this paper is to compute the index and the nullity of certain biharmonic maps.
It shall be clear from our analysis that, in general, despite the simplicity of the involved maps, this is a hudge task (for this reason some of the computations have also been  checked with the aid of Mathematica$^{\footnotesize \textregistered}$). Therefore, in some cases, we shall focus on reduced index and nullity (i.e., index and nullity which arise from the restriction to equivariant variations).

Now we want to prepare the ground to state our main results. To this purpose, first of all we need to explain some basic facts about the iterated Jacobi operator $I_2(V)$ and the definition of index and nullity. More specifically, let $\varphi:M\to N$ be a biharmonic map between two Riemannian manifolds $(M,g)$, $(N,h)$. We shall consider a two-parameter smooth variation $\left \{ \varphi_{t,s} \right \}$ $(-\varepsilon <t,s < \varepsilon,\,\varphi_{0,0}=\varphi)$ and denote by $V,W$ its associated vector fields:
\begin{align}\label{V-W}
& V(x)= \left . \frac{d}{dt}\right |_{t=0} \varphi_{t,0}  \in T_{\varphi(x)}N \\ \nonumber
& W(x)= \left . \frac{d}{ds}\right |_{s=0} \varphi_{0,s} \in T_{\varphi(x)}N\,.
\end{align}
Note that $V$ and $W$ are sections of $\varphi^{-1}TN$. The {\it Hessian} of the bienergy functional $E_2$ at its critical point $\varphi$ is defined by
\begin{equation}\label{Hessian-definition}
H(E_2)_\varphi (V,W)= \left . \frac{\partial^2}{\partial t \partial s}\right |_{(t,s)=(0,0)}  E_2 (\varphi_{t,s}) \, .
\end{equation}
The following theorem was obtained by Jiang and translated by Urakawa \cite{Jiang}:
\begin{theorem}\label{Hessian-Theorem} Let $\varphi:M\to N$ be a biharmonic map between two Riemannian manifolds $(M,g)$ and $(N,h)$, where $M$ is compact. Then the Hessian of the bienergy functional $E_2$ at a critical point $\varphi$ is given by
\begin{equation}\label{Operator-Ir}
H(E_2)_\varphi (V,W)= \int_M \langle I_2(V),W \rangle \, dv_M \,\,,
\end{equation}
where $I_2 \,:\mathcal{C}\left(\varphi^{-1} TN\right) \to \mathcal{C}\left(\varphi^{-1} TN\right)$ is a semilinear elliptic operator of order $4$.
\end{theorem}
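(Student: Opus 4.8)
The plan is to identify $I_2$ as the self-adjoint fourth-order operator obtained by linearising the Euler--Lagrange operator of $E_2$ (the bitension field) at the critical point $\varphi$. The conceptual engine is a standard, but here decisive, observation: at a critical point the mixed partial \eqref{Hessian-definition} reduces to the derivative of the first-variation integrand, and the contribution of the \emph{second-order} (acceleration) part of the variation is annihilated by the criticality condition. This is precisely what makes $H(E_2)_\varphi$ depend only on $V$ and $W$ and not on the particular variation $\{\varphi_{t,s}\}$ realising them. Throughout I would work in the pull-back bundle $\Phi^{-1}TN$ for $\Phi(x,t,s)=\varphi_{t,s}(x)$, with induced connection $\nabla^{\Phi}$, using repeatedly the torsion-free symmetry $\nabla^{\Phi}_{\partial_t}d\Phi(X)=\nabla^{\Phi}_X d\Phi(\partial_t)$ and the curvature commutation $\nabla^{\Phi}_{\partial_t}\nabla^{\Phi}_X Z-\nabla^{\Phi}_X\nabla^{\Phi}_{\partial_t}Z=R^N(d\Phi(\partial_t),d\Phi(X))Z$ for $X$ tangent to $M$.

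First I would establish the first-variation formula. Differentiating \eqref{bienergia} in $s$ gives
\[
\frac{\partial}{\partial s}E_2(\varphi_{t,s})=\int_M\langle\nabla^{\Phi}_{\partial_s}\tau(\varphi_{t,s}),\tau(\varphi_{t,s})\rangle\,dv_M,
\]
and after commuting $\nabla^{\Phi}_{\partial_s}$ with the spatial trace (producing one curvature term) and integrating by parts twice on the closed manifold $M$ (self-adjointness of the rough Laplacian $\Delta^{\varphi}=-\trace(\nabla^{\varphi}\nabla^{\varphi}-\nabla^{\varphi}_{\nabla^M})$, no boundary terms since $M$ is compact), this becomes
\[
\frac{\partial}{\partial s}E_2(\varphi_{t,s})=-\int_M\langle\tau_2(\varphi_{t,s}),d\Phi(\partial_s)\rangle\,dv_M,
\]
where $\tau_2(\varphi)=-\Delta^{\varphi}\tau(\varphi)-\trace R^N(\tau(\varphi),d\varphi)d\varphi$ is the bitension field. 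Here compactness of $M$ is essential.

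Next I would differentiate this expression in $t$ and evaluate at $(t,s)=(0,0)$, obtaining
\[
H(E_2)_\varphi(V,W)=-\int_M\Big[\langle\nabla^{\Phi}_{\partial_t}\tau_2(\varphi_{t,s}),d\Phi(\partial_s)\rangle+\langle\tau_2(\varphi_{t,s}),\nabla^{\Phi}_{\partial_t}d\Phi(\partial_s)\rangle\Big]_{(0,0)}\,dv_M.
\]
Since $\varphi$ is biharmonic, $\tau_2(\varphi)=0$, so the second summand — the one carrying the acceleration field $\nabla^{\Phi}_{\partial_t}d\Phi(\partial_s)|_0$ — vanishes, and with $d\Phi(\partial_s)|_0=W$ we are left with $H(E_2)_\varphi(V,W)=\int_M\langle I_2(V),W\rangle\,dv_M$, where $I_2(V):=-\nabla^{\Phi}_{\partial_t}\tau_2(\varphi_t)|_0$ is the linearisation of the bitension field in the direction $V$. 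Reading off the leading term: $\tau_2$ has principal part $-\Delta^{\varphi}\tau=-\Delta^{\varphi}\trace\nabla d\varphi$, which is of fourth order in $\varphi$, and its linearisation in $V$ has principal part $\Delta^{\varphi}(\Delta^{\varphi}V)$, the bi-Laplacian. Hence $I_2$ is linear in $V$ with $\varphi$-dependent coefficients, of order $4$, with principal symbol $|\xi|^4\,\Id$; in particular it is a semilinear elliptic operator of order $4$, and its formal self-adjointness is consistent with $H(E_2)_\varphi$ being a symmetric bilinear form.

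I expect the obstacle to be entirely computational rather than conceptual: the two integrations by parts in the first-variation step, and especially the full linearisation $I_2(V)=-\nabla^{\Phi}_{\partial_t}\tau_2(\varphi_t)|_0$, generate a sizeable collection of lower-order curvature contractions (terms built from $R^N$, $\nabla^N R^N$, $d\varphi$, $\tau(\varphi)$ and their traces) whose bookkeeping, via the repeated commutation identities above, is delicate and error-prone. The only structural point one must secure is that every term involving the acceleration of the variation is eliminated by $\tau_2(\varphi)=0$, which is what I have used to pass to the operator form; establishing the exact expression of the curvature remainder in $I_2$ is then routine but long, and is the part one would naturally verify by symbolic computation.
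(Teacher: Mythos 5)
Your argument is correct and is the standard derivation of the second variation of the bienergy: compute the first variation to obtain the bitension field $\tau_2(\varphi)$, differentiate once more, and observe that the term carrying the acceleration $\nabla^{\Phi}_{\partial_t}d\Phi(\partial_s)$ is killed by the criticality condition $\tau_2(\varphi)=0$, leaving $I_2(V)=-\nabla^{\Phi}_{\partial_t}\tau_2(\varphi_{t,0})\big|_{t=0}$ with principal part $\overline{\Delta}^2 V$. The paper does not prove this theorem itself --- it quotes it from Jiang (in Urakawa's translation) and only records the explicit expression of $I_2$ for spherical targets --- and your route is precisely the one followed in that cited source, so there is nothing to compare beyond noting that your sign conventions for the curvature term in $\tau_2$ differ harmlessly from the paper's.
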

Now we want to give an explicit description of the operator $I_2$. To this purpose, let $\nabla^M, \nabla^N$ and $\nabla^{\varphi}$ be the induced connections on the bundles $TM, TN$ and $\varphi ^{-1}TN$ respectively. Then the \textit{rough Laplacian} on sections of $\varphi^{-1} TN$, denoted by $\overline{\Delta}$, is defined by
\begin{equation}\label{roughlaplacian}
    \overline{\Delta}=d^* d =-\sum_{i=1}^m\Big\{\nabla^{\varphi}_{e_i}
    \nabla^{\varphi}_{e_i}-\nabla^{\varphi}_
    {\nabla^M_{e_i}e_i}\Big\}\,\,,
\end{equation}
where $\{e_i\}_{i=1}^m$ is a local orthonormal frame field tangent to $M$. 
In the present paper, we shall only need the explicit expression of $I_2(V)$ in the case that the target manifold is $\s^n$. This relevant formula, which was first given in \cite{Onic} and can be deduced from a general formula in \cite{Jiang}, is the following:
\begin{eqnarray}\label{I2-general-case}
\nonumber
I_2(V)&=&\overline{\Delta}^2 V+\overline{\Delta}\left ( {\rm trace}\langle V,d\varphi \cdot \rangle d\varphi \cdot - |d\varphi|^2\,V \right)+2\langle d\tau(\varphi),d\varphi \rangle V+|\tau(\varphi)|^2 V\\ \nonumber
&&-2\,{\rm trace}\langle V,d\tau(\varphi) \cdot \rangle d\varphi \cdot-2 \,{\rm trace} \langle \tau(\varphi),dV \cdot \rangle d\varphi \cdot - \langle \tau(\varphi),V \rangle \tau(\varphi)\\ 
&&+{\rm trace} \langle d\varphi \cdot,\overline{\Delta}V \rangle d\varphi \cdot +{\rm trace}\langle d\varphi \cdot,\left ( {\rm trace}\langle
V,d\varphi \cdot \rangle d\varphi \cdot \right ) \rangle d\varphi \cdot -2 |d\varphi|^2\, {\rm trace}\langle d\varphi \cdot,V \rangle
 d\varphi\cdot \\ \nonumber
&&+2 \langle dV,d\varphi\rangle \tau(\varphi) -|d\varphi|^2 \,\overline{\Delta}V+|d\varphi|^4 V\,, \nonumber
\end{eqnarray}
where $\cdot$ denotes trace with respect to a local orthonormal frame field on $M$. Next, it is important to recall from the general theory that, since $M$ is compact, the spectrum
\begin{equation}
\lambda_1 < \lambda_2 < \ldots < \lambda_i < \ldots
\end{equation}
of the iterated Jacobi operator $I_2(V)$ is \textit{discrete} and tends to $+\infty$ as $i$ tends to $+ \infty$. We denote by $\mathcal{V}_i$ the eigenspace associated to the eigenvalue $\lambda_i$. Then we define
\begin{equation}\label{Index-definition}
{\rm Index}(\varphi) = \sum_{\lambda_i <0} \dim(\mathcal{V}_i)\,.
\end{equation}
The nullity of $\varphi$ is defined as
\begin{equation}\label{Nullity-definition}
{\rm Nullity}(\varphi) =  \dim \left \{ V \in\mathcal{C}\left(\varphi^{-1} TN\right) \, : \, I_2(V)=0\right \} \,.
\end{equation}
We say that a map $\varphi: M\to N$ is {\it stable} if ${\rm Index}(\varphi)=0$.

The index and the nullity of certain proper biharmonic maps have been computed in \cite {BFO,LO, TAMS} where, apart from one case, only estimates have been produced.  In this paper we continue this program of study of the second variation of the bienergy and now we are in the right position to describe the specific examples that we shall investigate: each of them contains a short description of the biharmonic maps under consideration and the corresponding result concerning their exact index and nullity. In the first examples the domain of the map is a flat torus or a circle, for which the full description of its spectrum is well known, and the pull-back bundle of the map is parallelizable. Moreover, in the first example, where the domain is the flat torus $\mathbb{T}^2$, we shall also give an explicit geometric description of the space where the Hessian is negative definite. In the last example we shall consider a case where the domain is \textit{not} compact: in this context it is meaningful to study \textit{stability with respect to compactly supported variations}. In particular, we shall prove the existence of a large family of \textit{strictly stable} proper biharmonic maps $\varphi: \R \to \s^2$. The proofs of the results shall be given in Section\link\ref{proofs}. Finally, in the last section we shall define and study a \textit{reduced} index and nullity.

We shall now give a detailed description of the results.

\begin{example}\label{example-mapstoro-to-sfera}
We write the flat 2-torus ${\mathbb T}^2$ as
\begin{equation}\label{toro}
{\mathbb T}^2 = \left ( \s ^1 \times \s^1, d\gamma^2+ d\vartheta^2 \right ) \,\, , \qquad 0 \leq \gamma, \vartheta \leq 2 \pi \,\,.
\end{equation}
Next, we describe the 2-sphere $\s^2$ by means of spherical coordinates:
\begin{equation}\label{2sfera}
\s^2 = \left ( \s^1 \times [0,\pi], \, \sin^2 \alpha \, \, dw^2+ d\alpha^2 \right ) \,\, , \qquad 0 \leq w \leq 2 \pi \,, \,\, 0 \leq \alpha\leq \pi \,\, .
\end{equation}
We embed $\s^2$ in the canonical way into $\R^3$ and we consider equivariant maps $\varphi_{k} : {\mathbb T}^2 \to \s^2$ of the following form:
\begin{equation}\label{equivdatoroasfera}
    \left ( \gamma, \, \vartheta \right ) \mapsto \left ( \sin \alpha(\vartheta) \cos(k\gamma), \sin \alpha(\vartheta) \sin(k\gamma),\cos  \alpha (\vartheta) \right ) \,\, ,
\end{equation}
where $k\in \z^*$ is a fixed integer and $\alpha(\vartheta)$ is a differentiable, periodic function of period equal to $2\pi$.
The condition of biharmonicity (see \cite{Mont-Ratto3}) for $\varphi_{k}$ reduces to:
\begin{equation}\label{biarmoniatorosfera}
    \alpha^{(4)} - {\alpha}'' \, \left [ 2 \, k^2 \, \cos (2 \alpha)\right ] + ({\alpha'})^2 \, \left [ 2 \, k^2 \, \sin (2 \alpha)\right ] + \, \frac{k^4}{2} \, \sin (2\alpha) \, \cos (2 \alpha) \, = \, 0 \,\, .
\end{equation}
In particular, \eqref{biarmoniatorosfera} admits the following constant solutions:
\begin{equation}\label{trivialsolutions}
    {\rm (i)}\,\, \alpha \, \equiv \ell \, \frac {\pi}{2} \,\,, {\rm where} \,\,  \ell =0,\,1,\,2  \, \, ; \qquad \rm {(ii)}\,\, \alpha\, \equiv \frac {\pi}{4} \,\,\, {\rm or}\, \,\,  \alpha \, \equiv \frac {3\, \pi}{4} \,\, .
\end{equation}
The solutions in \eqref{trivialsolutions}(i) are not interesting because they give rise to harmonic maps which are absolute minima for the bienergy. By contrast, the solutions in \eqref{trivialsolutions}(ii) represent {proper biharmonic maps} and here we study their second variation operator $I_2$. Despite the apparently simple structure of these critical points, the study of their index and nullity requires a rather accurate analysis. Since index and nullity are invariant with respect to composition with an isometry of either the domain or the target, it is not restrictive to assume that $k\in \n^*$ in \eqref{equivdatoroasfera} and $\alpha=\pi /4$ in \eqref{trivialsolutions}. We shall prove the following result:
\begin{theorem}\label{Spectrum-theorem-toro-sfera} Let $\varphi_{k} : {\mathbb T}^2 \to \s^2$ be the proper biharmonic map
\begin{equation}\label{equivdatoroasfera-bis*}
    \left ( \gamma, \, \vartheta \right ) \mapsto \left ( \sin (\alpha^*) \cos(k\gamma), \sin (\alpha^*) \sin(k\gamma),\cos  (\alpha^*) \right ) \,\, ,
\end{equation}
where $k\in \n^*$ and $\alpha^*= \pi /4$.
Then the eigenvalues of the second variation operator $I_2(V)$ associated to $\varphi_{k}$ can be parametrized by means of two parameters $m,n\in \n\times \n$ as follows:
\renewcommand{\arraystretch}{1.3}
\begin{equation*}
\begin{tabular}{ l r c l}
  If $m=n=0$: & && \\
 & $\mu_0$&$=$&$0$  \\
 & $\mu_1$&$=$&$-k^4$  \\
   If  $m \geq 1,\,n=0$:& &&  \\
  &$\lambda_{m,0}^+$ &$=$&$\frac{1}{2} \left(-k^4+5k^2 m^2+2 m^4+ \sqrt{k^8+2 k^6 m^2+k^4
m^4+32k^2 m^6 }\right)$ \\
&$\lambda_{m,0}^- $&$=$&$\frac{1}{2} \left(-k^4+5k^2 m^2+2 m^4- \sqrt{k^8+2 k^6 m^2+k^4
   m^4+32k^2 m^6 }\right)$\\
    If  $m=0,\,n \geq 1$:&  && \\
  &$\lambda_{0,n}^+$ &$=$&$n^2 (n^2+k^2)$ \\
&$\lambda_{0,n}^-$ &$=$&$n^4-k^4$\\
 If  $m,n \geq 1$:& &&  \\
  &$\lambda_{m,n}^+$ &$=$&$\frac{1}{2}\left(-k^4+k^2 \left(5 m^2+n^2\right)+2 \left(m^2+n^2\right)^2 \right .$ \\
  &&&$ \left . + \sqrt{k^8+2 k^6 \left(m^2+n^2\right)+k^4
   \left(m^2+n^2\right)^2+32 k^2 m^2 \left(m^2+n^2\right)^2}\right)$ \\
&$\lambda_{m,n}^-$ &$=$&$\frac{1}{2}\left(-k^4+k^2 \left(5 m^2+n^2\right)+2 \left(m^2+n^2\right)^2 \right .$\\
&&& $ \left . - \sqrt{k^8+2 k^6 \left(m^2+n^2\right)+k^4   \left(m^2+n^2\right)^2+32 k^2 m^2 \left(m^2+n^2\right)^2}\right)$
\end{tabular}
\end{equation*}
\renewcommand{\arraystretch}{1.}
Moreover, the multiplicities $\nu(\lambda)$ of these eigenvalues are:
\begin{equation}\label{eigenspaces-toro-sfera}
\begin{tabular}{c c c cc}
$\nu \left ( \mu_0\right )$&=&$\nu \left ( \mu_1\right )$&=&$1$ \\
$\nu \left ( \lambda_{m,0}^+\right )$&=&$\nu \left ( \lambda_{m,0}^-\right )$&=&$2$  \\
$\nu \left ( \lambda_{0,n}^+\right )$&=&$\nu \left ( \lambda_{0,n}^-\right )$&=&$2$  \\
$\nu \left ( \lambda_{m,n}^+\right )$&=&$\nu \left ( \lambda_{m,n}^-\right )$&=&$4$  \\
\end{tabular}
\end{equation}
\end{theorem}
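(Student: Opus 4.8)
The plan is to diagonalise $I_2$ explicitly by choosing a convenient global frame for $\varphi_k^{-1}T\s^2$ and then reducing the eigenvalue problem, via Fourier analysis on the torus, to that of a family of $2\times2$ matrices indexed by $(m,n)$. First I would fix the orthonormal frame $\{e_1,e_2\}$ of $\varphi_k^{-1}T\s^2$ obtained by pulling back the unit vectors along the parallel $\alpha=\alpha^*$; concretely $e_1=(-\sin k\gamma,\cos k\gamma,0)$ and $e_2=\tfrac{1}{\sqrt2}(\cos k\gamma,\sin k\gamma,-1)$, which are globally defined and $2\pi$-periodic in $\gamma,\vartheta$, so that the bundle is parallelisable as asserted. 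A short computation gives the connection coefficients $\nabla^{\varphi}_{\partial_\gamma}e_1=-\tfrac{k}{\sqrt2}e_2$, $\nabla^{\varphi}_{\partial_\gamma}e_2=\tfrac{k}{\sqrt2}e_1$ and $\nabla^{\varphi}_{\partial_\vartheta}e_i=0$, together with the basic data $d\varphi_k(\partial_\gamma)=\tfrac{k}{\sqrt2}e_1$, $d\varphi_k(\partial_\vartheta)=0$, $|d\varphi_k|^2=\tfrac{k^2}{2}$, $\tau(\varphi_k)=-\tfrac{k^2}{2}e_2$ and $d\tau(\varphi_k)(\partial_\gamma)=-\tfrac{k^3}{2\sqrt2}e_1$. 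Because the map collapses the $\vartheta$-direction and $\alpha^*$ is constant, every trace term in \eqref{I2-general-case} reduces to its $\partial_\gamma$-contribution, so all the ingredients are elementary.

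Writing an arbitrary section as $V=f\,e_1+g\,e_2$ with $f,g\in C^\infty(\mathbb{T}^2)$, I would substitute these data into \eqref{I2-general-case}. In this frame the rough Laplacian becomes $\overline{\Delta}V=-\big[(\partial_\gamma+\tfrac{k}{\sqrt2}J)^2+\partial_\vartheta^2\big](f,g)^{\!\top}$, where $J=\left(\begin{smallmatrix}0&1\\-1&0\end{smallmatrix}\right)$, and every other term of $I_2$ is likewise a constant-coefficient operator in this frame. Consequently $I_2$ preserves each Fourier mode, and expanding $f,g$ in the basis $e^{\i(m\gamma+n\vartheta)}$ with $(m,n)\in\z^2$ turns $I_2$ into a Hermitian $2\times2$ matrix $M_{m,n}$ acting on the pair of coefficients. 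Computing its trace and determinant, one checks that the roots of the characteristic polynomial of $M_{m,n}$ are exactly the stated $\lambda^{\pm}_{m,n}$: the first three terms under the radical assemble into $k^4(k^2+m^2+n^2)^2$, the square of the difference of the diagonal entries of $M_{m,n}$, while four times the squared modulus of the off-diagonal entry, which is proportional to $\i\,km(m^2+n^2)$, supplies the remaining term $32k^2m^2(m^2+n^2)^2$. Specialising $m=0$ (off-diagonal vanishing) recovers $\lambda^{+}_{0,n}=n^2(n^2+k^2)$ and $\lambda^{-}_{0,n}=n^4-k^4$, while the mode $(0,0)$ yields the distinct eigenvalues $\mu_0=0$ (eigenvector $e_1$) and $\mu_1=-k^4$ (eigenvector $e_2$), which I would record as a separate sanity check.

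Finally I would settle the multiplicities \eqref{eigenspaces-toro-sfera}. Since $M_{m,n}$ depends only on $m^2$ and $n^2$, each eigenvalue $\lambda^{\pm}_{m,n}$ is shared by all sign choices $(\pm m,\pm n)$; imposing the reality condition on $V$ (complex conjugation exchanges the mode $(m,n)$ with $(-m,-n)$) then turns each such family of complex modes into a real eigenspace of matching dimension. This gives multiplicity $4$ when $m,n\geq1$, multiplicity $2$ in the degenerate rows $m=0,\,n\geq1$ and $m\geq1,\,n=0$, and multiplicity $1$ for each of $\mu_0,\mu_1$ at $(0,0)$.

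I expect the main obstacle to be the assembly of the full matrix $M_{m,n}$: this requires carefully expanding $\overline{\Delta}^2$ and the two $\overline{\Delta}(\,\cdot\,)$ terms in the rotating frame and tracking the numerous cancellations among the thirteen terms of \eqref{I2-general-case} (the reason a symbolic check is advisable). The delicate point is that the $\sqrt2$-factors carried by the connection must recombine into the integer coefficients of the final eigenvalues; verifying this bookkeeping, rather than any conceptual difficulty, is where the work lies.
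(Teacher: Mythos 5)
Your proposal is correct and follows essentially the same route as the paper: the same orthonormal frame $\{V_Y,V_\eta\}$ along the parallel, the same explicit reduction of $I_2$ to a constant-coefficient system in that frame (your expression for $\overline{\Delta}$ and the resulting blocks reproduce Proposition~\ref{proposizione-I-esplicito-toro-sfera} and the matrices \eqref{matrici-S-m-0}, \eqref{matrici-S-0-n}, \eqref{matrici-S-m-n}), and the same Fourier-mode diagonalisation, with your trace and discriminant identities matching $A_{m,n,k}+B_{m,n,k}$, $(A_{m,n,k}-B_{m,n,k})^2=k^4(k^2+m^2+n^2)^2$ and $4C_{m,n,k}^2=32k^2m^2(m^2+n^2)^2$. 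The only divergence is bookkeeping: you use complex exponentials over $\z^2$ and $2\times 2$ Hermitian blocks, recovering the real multiplicities $1,2,4$ from the sign symmetry $(\pm m,\pm n)$ and the reality condition, whereas the paper works with real trigonometric bases and the corresponding $2\times2$, $4\times4$ and $8\times8$ real matrices, whose characteristic polynomials are powers of your quadratic.
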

Now, in order to state our next result, it is convenient to define, for $k \in \n^*$,
\begin{equation}\label{definizione-f(k)}
f(k)=\sharp \left \{[m,n]\in \n^* \times \n^* \,\,:\,\,\lambda_{m,n}^- <0  \right \}
\end{equation}
and
\begin{equation}\label{definizione-g(k)}
g(k)=\sharp \left \{[m,n]\in \n^* \times \n^* \,\,:\,\,\lambda_{m,n}^- =0  \right \}.
\end{equation}
We point out that it is not difficult to prove that $f(k)\leq k^2$ for all $k \in \n^*$ and that $f(k)$ does not admit a polynomial expression.
\begin{theorem}\label{Index-theorem-toro-sfera} Let $\varphi_{k} : {\mathbb T}^2 \to \s^2$ be the proper biharmonic map \eqref{equivdatoroasfera-bis*}. Then
\begin{align}
& {\rm Nullity}(\varphi_{k})=5+4\,g(k)  \\
& {\rm Index}(\varphi_{k})=1+4(k-1)+4\, f(k).
\end{align}
\end{theorem}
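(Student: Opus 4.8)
The plan is to read off the index and nullity directly from the spectrum provided by Theorem~\ref{Spectrum-theorem-toro-sfera}. That computation comes with an $L^2$-orthogonal decomposition of $\mathcal{C}(\varphi_{k}^{-1}T\s^2)$ into finite-dimensional subspaces indexed by $(m,n)\in\n\times\n$ (and, for $(m,n)\neq(0,0)$, by the sign $\pm$), on each of which $I_2$ acts as multiplication by the corresponding eigenvalue. Hence ${\rm Index}(\varphi_{k})$ (resp. ${\rm Nullity}(\varphi_{k})$) is obtained by summing the listed multiplicities $\nu(\cdot)$ over exactly those subspaces on which the eigenvalue is negative (resp. zero); coincidences between numerical values of eigenvalues are harmless, since the full eigenspace for a given value is the direct sum of the relevant per-mode contributions. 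So the whole argument reduces to a sign analysis carried out family by family. First, the mode $m=n=0$ contributes $\mu_1=-k^4<0$ with multiplicity $1$ to the index and $\mu_0=0$ with multiplicity $1$ to the nullity.

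Next I would dispose of all the $+$ eigenvalues at once. Writing $\lambda^{\pm}_{m,n}=\tfrac12(a\pm b)$ with $a$ the ``trace'' part and $b\ge0$ the square root, and setting $s=m^2+n^2$, the key observation is that the radicand equals $(k^4+k^2 s)^2+32k^2m^2s^2$, so that $b\ge k^4+k^2 s$. Consequently
\begin{equation*}
a+b\ \ge\ (-k^4+4k^2m^2+k^2 s+2s^2)+(k^4+k^2 s)\ =\ 4k^2m^2+2k^2 s+2s^2\ >\ 0
\end{equation*}
whenever $(m,n)\neq(0,0)$, which shows $\lambda^{+}_{m,n}>0$ in every family (the subcase $\lambda^{+}_{0,n}=n^2(n^2+k^2)>0$ being immediate). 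Thus no $+$ eigenvalue ever contributes to index or nullity.

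It remains to analyse the $-$ eigenvalues. On the axis $m=0$ one has $\lambda^{-}_{0,n}=n^4-k^4$, negative for $1\le n\le k-1$, zero for $n=k$, positive for $n>k$; this gives $2(k-1)$ to the index and $2$ to the nullity. On the axis $n=0$ the sign of $\lambda^{-}_{m,0}=\tfrac12(a-b)$ is governed by $a-b$, hence (as $b\ge0$) by $a^2-b^2$ together with the sign of $a$. The central computation is the factorization
\begin{equation*}
a^2-b^2=4m^2\,(m^2-k^2)\,\bigl((m^2-k^2)^2+2k^4\bigr),
\end{equation*}
whose last factor is strictly positive, so $a^2-b^2$ has the sign of $m^2-k^2$. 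For $1\le m\le k-1$ this gives $a^2<b^2$, hence $\lambda^{-}_{m,0}<0$; for $m>k$ one has $a>0$ and $a^2>b^2$, hence $\lambda^{-}_{m,0}>0$; and for $m=k$ one has $a=6k^4>0$ with $a^2=b^2$, hence $\lambda^{-}_{k,0}=0$. This adds a further $2(k-1)$ to the index and $2$ to the nullity, bringing the two axes to totals $4(k-1)$ and $4$.

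Finally, for the interior modes $m,n\ge1$ the eigenvalue $\lambda^{-}_{m,n}$ is negative on exactly $f(k)$ pairs and zero on exactly $g(k)$ pairs, by the very definitions \eqref{definizione-f(k)} and \eqref{definizione-g(k)}; since each carries multiplicity $4$, these contribute $4f(k)$ to the index and $4g(k)$ to the nullity. Summing the three groups yields ${\rm Index}(\varphi_{k})=1+4(k-1)+4f(k)$ and ${\rm Nullity}(\varphi_{k})=1+4+4g(k)=5+4g(k)$, as claimed. I expect the main obstacle to be the sign analysis on the $n=0$ axis: everything there hinges on producing the above factorization of $a^2-b^2$ and recognizing that its quadratic factor $(m^2-k^2)^2+2k^4$ is positive, which is precisely what pins the threshold at $m=k$; by comparison, the uniform positivity of the $\lambda^{+}$ and the interior bookkeeping are routine.
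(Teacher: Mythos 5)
Your proof is correct and follows the same overall route as the paper: read off the index and nullity family by family from the spectral data of Theorem~\ref{Spectrum-theorem-toro-sfera}, with the axes contributing $1+4(k-1)$ and $5$ and the interior modes contributing $4f(k)$ and $4g(k)$ by definition. The differences lie in how the two sign lemmas are established. For the positivity of every $\lambda^{+}_{m,n}$, the paper asserts that the relevant expression vanishes at $m=n=0$ and is increasing in $m$ and $n$; your observation that the radicand equals $(k^4+k^2s)^2+32k^2m^2s^2$ with $s=m^2+n^2$, so that the square root dominates $k^4+k^2s$ and $a+b\ge 4k^2m^2+2k^2s+2s^2>0$, is a cleaner and fully explicit justification of the same fact. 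For the sign of $\lambda^{-}_{m,0}$, the paper's Lemma~\ref{lemma-tecnico1} substitutes $m=ck$ and argues via calculus that $h'(c)>0$ and $h(1)=0$, whereas you factor $a^2-b^2=4m^2(m^2-k^2)\bigl((m^2-k^2)^2+2k^4\bigr)$ and combine this with $a>0$ for $m\ge k$; the factorization (which I have checked) is purely algebraic and pins the threshold at $m=k$ without any monotonicity analysis. You also make explicit a point the paper leaves implicit, namely that numerical coincidences among eigenvalues across different modes (such as $\mu_0=\lambda^-_{k,0}=\lambda^-_{0,k}$) do not affect the count because the contributions are summed over orthogonal subspaces. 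Both arguments are complete; yours is somewhat more self-contained at the level of the two sign checks.
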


\begin{remark}\label{remark-stime-index-toro-sfera}
The eigenvalues in Theorem~\ref{Spectrum-theorem-toro-sfera} are not written in an increasing order. Moreover, we observe that some eigenvalues in Theorem~\ref{Spectrum-theorem-toro-sfera} may occur more than once. For instance, $\mu_0=\lambda_{k,0}^-=\lambda_{0,k}^-$.

The numerical values of the functions $f$ and $g$ in the statement of Theorem\link\ref{Index-theorem-toro-sfera} can be computed by means of a suitable computer algorithm. By way of example, we report some of them in the following table:
\begin{equation}\label{Table-index-toro-sfera}
\,\begin{array}{lllll}
k=1 &\quad \quad &{\rm Index}(\varphi_{k})=1 &\quad \quad& {\rm Nullity}(\varphi_{k})=5 \\ \nonumber
k=2 &\quad \quad &{\rm Index}(\varphi_{k})=13 &\quad \quad& {\rm Nullity}(\varphi_{k})=5 \\ \nonumber
k=3 &\quad \quad &{\rm Index}(\varphi_{k})=29 &\quad \quad& {\rm Nullity}(\varphi_{k})=5 \\ \nonumber
k=4 &\quad \quad &{\rm Index}(\varphi_{k})=57 &\quad \quad& {\rm Nullity}(\varphi_{k})=5 \\ \nonumber
k=5 &\quad \quad &{\rm Index}(\varphi_{k})=89 &\quad \quad& {\rm Nullity}(\varphi_{k})=5 \\ \nonumber
k=6 &\quad \quad &{\rm Index}(\varphi_{k})=129 &\quad \quad& {\rm Nullity}(\varphi_{k})=5 \\ \nonumber
k=7 &\quad \quad &{\rm Index}(\varphi_{k})=181 &\quad \quad& {\rm Nullity}(\varphi_{k})=5 \\ \nonumber
k=8 &\quad \quad &{\rm Index}(\varphi_{k})=233 &\quad \quad& {\rm Nullity}(\varphi_{k})=5 \\ \nonumber
k=9 &\quad \quad &{\rm Index}(\varphi_{k})=297 &\quad \quad& {\rm Nullity}(\varphi_{k})=5 \\ \nonumber
k=10 &\quad \quad &{\rm Index}(\varphi_{k})=365&\quad \quad& {\rm Nullity}(\varphi_{k})=5 \\ \nonumber
k=17 &\quad \quad &{\rm Index}(\varphi_{k})=1065 &\quad \quad& {\rm Nullity}(\varphi_{k})=5 \\ \nonumber
k=155 &\quad \quad &{\rm Index}(\varphi_{k})=88433 &\quad \quad& {\rm Nullity}(\varphi_{k})=5  \nonumber
\end{array}
\end{equation}

\textbf{Conjecture:} The ${\rm Nullity}(\varphi_{k})$ is equal to $5$ for any $k$. \vspace{2mm}

We have checked this conjecture by means of a computer algorithm for all $k \leq 1500$. Therefore, it is reasonable to believe that ${\rm Nullity}(\varphi_{k})=5$ for all $k \in \n^*$. The difficulty to prove this conjecture is the following: there are values which are very close both to satisfy $\lambda_{m,n}^-=0$ and to be integers. For instance, the expression which defines $\lambda_{m,n}^-$ vanishes when $k=192,\,m=100$ and $n \simeq 184,998$.

\end{remark}
\end{example}

\begin{example}\label{parallel-r-harmonic-circles}
Let $\varphi_{k}\,: \s^1 \to \s^2\hookrightarrow  \R^3$ be the proper biharmonic map defined by
\begin{equation}\label{r-harmonic-examples}
 \gamma \mapsto \, \left ( \frac{1}{\sqrt 2}\, \cos(k\gamma),\,\frac{1}{\sqrt 2}\,\sin(k\gamma), \,\frac{1}{\sqrt 2}\right ) \,, \quad 0 \leq \gamma \leq 2\pi\,,
\end{equation}
where $k \in \n^*$ is a fixed positive integer. Both the notions of biharmonicity and that of index and nullity of a biharmonic map are invariant under homothetic changes of the metric of either the domain or the codomain. Therefore, in this example, we have assumed for simplicity that the domain is the unit circle. In particular, the radius of the domain which would ensure the condition of isometric immersion for $k=1$ is $R=1/\sqrt 2$, but any choice of $R$ would not affect the conclusions of our next result:
\begin{theorem}\label{Index-theorem-r-harmonic-circles} Let $\varphi_{k} : \s^1 \to \s^2$ be the proper biharmonic map defined in \eqref{r-harmonic-examples}. Then
\begin{align}\label{*}
& {\rm Nullity}(\varphi_{k})=3 \\
& {\rm Index}(\varphi_{k})=1+2(k-1) \nonumber
\end{align}
\end{theorem}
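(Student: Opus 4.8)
The plan is to exploit the two special features of this example: the pull-back bundle $\varphi_k^{-1}T\s^2$ is parallelizable, and $\varphi_k$ is equivariant (homogeneous along the circle), so that $I_2$ becomes a \emph{constant-coefficient} operator in a suitable global frame. A Fourier analysis on $\s^1$ then diagonalizes $I_2$ frequency by frequency, reducing the eigenvalue problem to a family of $2\times2$ Hermitian matrices. First I would fix the global orthonormal frame $\{E_1,E_2\}$ of $\varphi_k^{-1}T\s^2$ given by the unit tangent to the curve, $E_1=(-\sin(k\gamma),\cos(k\gamma),0)$, and the meridian field $E_2=\frac{1}{\sqrt2}(-\cos(k\gamma),-\sin(k\gamma),1)$. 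Projecting the $\R^3$-derivatives onto $T\s^2$ gives the geometric data $|d\varphi_k|^2=k^2/2$ and $\tau(\varphi_k)=(k^2/2)\,E_2$ (so $\varphi_k$ is indeed proper biharmonic, with $|\tau(\varphi_k)|^2=k^4/4$ constant), together with the connection law $\nabla^{\varphi}_{\partial_\gamma}E_1=\omega E_2$, $\nabla^{\varphi}_{\partial_\gamma}E_2=-\omega E_1$, where $\omega=k/\sqrt2$. Thus $\nabla^{\varphi}_{\partial_\gamma}=\frac{d}{d\gamma}\,I+\omega J$ with $J$ the rotation by $\pi/2$, and $\overline{\Delta}=-(\frac{d}{d\gamma}\,I+\omega J)^2$; all coefficients are constant in this frame.

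Next I would substitute these data into \eqref{I2-general-case} (with $M=\s^1$, so that ``trace'' is evaluation on $\partial_\gamma$ and $\nabla^M_{\partial_\gamma}\partial_\gamma=0$). Writing a section as $V=aE_1+bE_2$ and expanding $a,b$ in Fourier modes $e^{ij\gamma}$, each term of \eqref{I2-general-case} becomes an explicit $2\times2$ matrix, and their sum is
\begin{equation*}
\mathcal M_j=\begin{pmatrix} j^4+3k^2j^2 & 2\sqrt2\,i\,k\,j^3 \\[1mm] -2\sqrt2\,i\,k\,j^3 & j^4+2k^2j^2-k^4 \end{pmatrix}.
\end{equation*}
This matrix is Hermitian, and computing $\tfrac12\operatorname{tr}\mathcal M_j$ and $\tfrac14(\operatorname{tr}\mathcal M_j)^2-\det\mathcal M_j$ shows that its eigenvalues are exactly $\lambda^{\pm}_{j,0}$ from Theorem~\ref{Spectrum-theorem-toro-sfera}. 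This is no accident: $\varphi_k\colon\s^1\to\s^2$ is precisely the $\vartheta$-independent restriction of the torus map of Example~\ref{example-mapstoro-to-sfera}, so the circle spectrum is the $n=0$ slice of the torus spectrum.

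Finally I would read off multiplicities and signs. Since $\mathcal M_{-j}=\overline{\mathcal M_j}$, the frequencies $j$ and $-j$ yield the same (real) eigenvalues, and pairing conjugate modes $e^{\pm ij\gamma}$ shows that each eigenvalue $\lambda^{\pm}_{j,0}$ with $j\geq1$ occurs with real multiplicity $2$, while at $j=0$ the matrix is $\operatorname{diag}(0,-k^4)$, giving the simple eigenvalues $0$ (eigenvector $E_1$) and $-k^4$ (eigenvector $E_2$). For the sign count I would factor
\begin{equation*}
\det\mathcal M_j=j^2\,(j^2-k^2)\,(j^4-2k^2j^2+3k^4),
\end{equation*}
where the last factor is strictly positive for all real $j$ (its discriminant in $j^2$ is $-8k^4<0$). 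Hence for $j\geq1$: when $1\leq j\leq k-1$ one has $\det\mathcal M_j<0$, so $\lambda^-_{j,0}<0<\lambda^+_{j,0}$; at $j=k$ one has $\det\mathcal M_k=0$ with $\operatorname{tr}\mathcal M_k=6k^4>0$, so $\lambda^-_{k,0}=0<\lambda^+_{k,0}$; and for $j>k$ both $\det\mathcal M_j>0$ and $\operatorname{tr}\mathcal M_j=2j^4+5k^2j^2-k^4>0$ (as $2j^4>2k^4>k^4$), so $0<\lambda^-_{j,0}\leq\lambda^+_{j,0}$. Collecting, the kernel consists of the constant $E_1$-mode at $j=0$ together with the two modes at $j=k$, whence $\nul(\varphi_k)=1+2=3$, while the negative eigenvalues are the simple $-k^4$ at $j=0$ and $\lambda^-_{j,0}$ for $1\leq j\leq k-1$, each of multiplicity $2$, whence $\Index(\varphi_k)=1+2(k-1)$.

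The main obstacle is the passage to $\mathcal M_j$: formula \eqref{I2-general-case} contains many terms, and each must be traced carefully in the rotating frame, so the bookkeeping is long and best double-checked by computer. A secondary point requiring care is the correct count of the real multiplicities produced by the complex Fourier modes, and the verification that the quartic factor of $\det\mathcal M_j$ has no real zero, which is what pins down the index to the stated closed form. A pleasant independent check is that $\ker I_2$ is spanned by the restrictions to $\varphi_k$ of the three rotational Killing fields of $\s^2\subset\R^3$ (the $z$-rotation being the $j=0$ field $E_1$, and the $x$- and $y$-rotations the two $j=k$ fields), consistently with $\nul(\varphi_k)=\dim\Isom(\s^2)=3$.
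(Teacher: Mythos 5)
Your proposal is correct and follows the same overall architecture as the paper's proof: trivialize $\varphi_k^{-1}T\s^2$ by the tangent/normal frame of the small circle (your $E_1,E_2$ are the paper's $V_Y,-V_\eta$), observe that $I_2$ has constant coefficients in this frame, decompose by Fourier modes, and count signs of the resulting matrix eigenvalues; your geometric data ($|d\varphi_k|^2=k^2/2$, $\tau=\tfrac{k^2}{2}E_2$, $\nabla^\varphi_{\partial_\gamma}=\tfrac{d}{d\gamma}I+\tfrac{k}{\sqrt2}J$) and your matrices $\mathcal M_j$ agree with the paper's Proposition~\ref{proposizione-I-esplicito-s1-sfera} and the $4\times4$ matrices \eqref{matrici-I2-Sm}, of which your $2\times2$ Hermitian blocks are the complex-Fourier version, with the multiplicity count handled correctly. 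The one step you treat genuinely differently is the sign analysis: the paper proves Lemma~\ref{lemma-tecnico1} by substituting $m=ck$ into the explicit formula for $\lambda_{m,0}^-$ and checking monotonicity of the resulting function $h(c)$, whereas you avoid the square root altogether by factoring $\det\mathcal M_j=j^2(j^2-k^2)(j^4-2k^2j^2+3k^4)$ and noting the last factor is positive-definite (discriminant $-8k^4<0$), then reading off signs from $\det$ and $\operatorname{tr}$. Your route is more elementary and self-contained for this step, at the small cost of having to verify the factorization; it yields exactly the same trichotomy ($\lambda_{j,0}^-<0$ for $1\le j\le k-1$, $=0$ for $j=k$, $>0$ for $j>k$, with $\lambda_{j,0}^+>0$ throughout). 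Your closing observation that $\kernel I_2$ is spanned by the restrictions of the three rotational Killing fields of $\s^2$ is a nice consistency check not made explicit in the paper for this example (it parallels the geometric description of the null space given there for the torus case).
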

\begin{remark}\label{remark-inex-depend-k} Theorem\link\ref{Index-theorem-r-harmonic-circles} was known in the case that $k=1$: it was proved in \cite{Balmus,LO}, where the index and the nullity of $i:\s^{n-1}(1/\sqrt 2) \hookrightarrow \s^n$ was computed although the pull-back bundle $i^{-1}T\s^n$ is not parallelizable for all dimensions $n$. Since the $k$-fold rotation $e^{{\mathrm i}\vartheta} \mapsto e^{{\mathrm i}k\vartheta}$ of $\s^1$ is a local homothety, it is somehow surprising that the index depends on $k$. A possible explanation is the fact that the $k$-fold rotation
is not a global diffeomorphism and, while the biharmonic equation can be solved locally (so biharmonicity remains invariant under the composition with a local homothety), the index is a global notion.
\end{remark}
\end{example}
\begin{example}\label{example-Legendre} Now we study an example following the lines of \cite{BFO, S1}.  Let
$\mathbb{S}^{2n+1}=\{z\in\mathbb{C}^{n+1}: |z|=1\}$ be the
unit $(2n+1)$-dimensional Euclidean sphere. Consider
$\mathcal{J}:\mathbb{C}^{n+1}\to\mathbb{C}^{n+1}$,
$\mathcal{J}(z)={\mathrm i}z$, to be the usual complex structure on
$\mathbb{C}^{n+1}$ and
$$
\phi=s\circ\mathcal{J},\qquad \xi_z=-\mathcal{J}z,
$$
where $s:T_z\mathbb{C}^{n+1}\to T_z\mathbb{S}^{2n+1}$ is the
orthogonal projection. Endowed with these tensors and the standard
metric $h$, the sphere $(\mathbb{S}^{2n+1},\phi,\xi,\eta,h)$
becomes a Sasakian space form with constant $\varphi$-sectional
curvature equal to $1$.  An isometric immersion $\varphi:M^m\to\mathbb{S}^{2m+1}$ is said to be {\it Legendre} if it is {\it integral}, that is  $\eta(d\varphi(X))=0$ for all $X\in {\mathcal C}(TM)$. 
Sasahara studied the proper biharmonic Legendre
immersed surfaces in Sasakian space forms and obtained the
explicit representations of such surfaces into $\s^5$. In particular, he proved

\begin{theorem}[\cite{S1}]\label{th:Chen_T^2 in S^5}
Let $\varphi:M^2\to\mathbb{S}^5$ be a proper biharmonic Legendre
immersion. Then $\Phi=i\circ\varphi:M^2\to \C^3=\mathbb{R}^6$ is
locally given by
$$
\Phi(\gamma,\vartheta)=\frac{1}{\sqrt 2}\left (e^{{\mathrm i}\gamma},{\mathrm i}e^{-{\mathrm i}\gamma}\sin(\sqrt 2 \vartheta),
{\mathrm i}e^{-{\mathrm i}\gamma}\cos(\sqrt 2 \vartheta)\right),
$$
where $i:\mathbb{S}^5 \hookrightarrow\mathbb{R}^6$ is the canonical
inclusion.
\end{theorem}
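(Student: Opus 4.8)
The plan is to translate the fourth-order biharmonic condition into equations on the second fundamental form, to use the rigidity imposed by the Legendre condition to collapse the resulting PDE system to ordinary differential equations, and finally to integrate the Gauss--Weingarten structure equations of the immersion in $\R^6$ in order to recover the explicit position vector.

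First I would record the biharmonic equations for an isometric immersion $\varphi:M^2\to\s^5$, viewing $\s^5$ as a space form of constant sectional curvature $1$. Splitting the bitension field into its components tangent and normal to $M$, biharmonicity is equivalent to the pair
\begin{equation*}
\Delta^{\perp}H+\trace h(\cdot,A_H\cdot)-2H=0,\qquad 2\,\trace A_{\nabla^{\perp}_{\cdot}H}(\cdot)+\grad|H|^2=0,
\end{equation*}
where $h$ is the second fundamental form, $A$ the shape operator, $H$ the mean curvature vector and $\Delta^{\perp}$ the (positive) normal Laplacian. Since $\varphi$ is \emph{proper} biharmonic it is non-minimal, so $H\neq 0$ on a dense open set, on which the analysis is carried out.

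Next I would exploit the Legendre structure. Because $\varphi$ is integral, $\xi$ is normal to $M$ and the normal bundle splits orthogonally as $\phi(TM)\oplus\R\xi$; moreover $\langle h(X,Y),\xi\rangle=0$ and the cubic form $C(X,Y,Z)=\langle h(X,Y),\phi Z\rangle$ is \emph{totally symmetric}. Both facts follow directly from the Sasakian relations $\nabla_X\xi=-\phi X$ and $(\nabla_X\phi)Y=\langle X,Y\rangle\xi-\eta(Y)X$ together with $\phi(TM)\perp TM$. In particular $H\in\phi(TM)$, so all the extrinsic data is encoded in the symmetric tensor $C$ and the Levi-Civita connection of the abstract surface. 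Writing $H=|H|\,\phi e_1$ for a suitable orthonormal frame $\{e_1,e_2\}$ reduces the unknowns to the components of $C$ and a single connection one-form.

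Then I would substitute these relations into the Gauss and Codazzi equations and into the two biharmonic equations above. The tangential equation, combined with the symmetry of $C$, is expected to force $|H|^2$ to be constant and to pin down several components of $C$, while the normal equation fixes the precise value of $|H|$; through the Gauss equation one then finds that $M$ is flat. Once $|H|$ and the frame are normalized and flatness is established, the Gauss--Weingarten equations become a \emph{linear, constant-coefficient, completely integrable} first-order system for $\Phi=i\circ\varphi$ and its adapted frame in $\R^6$; integrating it yields exponential and trigonometric solutions, and imposing $|\Phi|=1$ fixes the constants, producing the stated parametrization up to a rigid motion of $\R^6$. The main obstacle will be the algebraic bookkeeping of the symmetric cubic form $C$ against the Codazzi equation while proving constancy of the mean curvature and flatness of the metric: this is the step where the various formal solution branches must be excluded so that only the single family survives. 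A secondary technical point is to verify the Gauss--Codazzi--Ricci integrability conditions for the reduced system, ensuring that the integrated $\Phi$ genuinely defines a Legendre immersion into $\s^5$ and not merely a solution of the formal equations.
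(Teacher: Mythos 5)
First, a point of comparison: the paper does not prove this statement at all --- it is quoted verbatim from Sasahara \cite{S1} (and used as a black box to set up Example \ref{example-Legendre}), so there is no in-paper argument to measure your attempt against. Judged on its own terms, your setup is correct and is in fact the route Sasahara follows: the splitting of the biharmonic condition into the normal equation $\Delta^{\perp}H+\trace h(\cdot,A_H\cdot)-2H=0$ and the tangential equation $2\,\trace A_{\nabla^{\perp}_{\cdot}H}(\cdot)+\grad|H|^2=0$ is the standard characterisation for isometric immersions into a unit sphere with $m=2$, and the two Legendre facts you invoke ($h(X,Y)\perp\xi$ and total symmetry of $C(X,Y,Z)=\langle h(X,Y),\phi Z\rangle$) do follow from $\tilde\nabla_X\xi=-\phi X$ and $(\tilde\nabla_X\phi)Y=\langle X,Y\rangle\xi-\eta(Y)X$ exactly as you say, so that $H\in\phi(TM)$ and the extrinsic geometry is governed by a totally symmetric cubic form.

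The genuine gap is that everything after this point is announced rather than proved, and that is where the entire content of the theorem lives. You write that the tangential equation ``is expected to force'' $|H|$ to be constant, that the normal equation ``fixes the precise value of $|H|$'', and that flatness ``one then finds''; but a classification theorem of this kind \emph{is} that computation. Concretely, you must: (i) parametrise the symmetric cubic form in the adapted frame $\{e_1,e_2\}$ with $H=|H|\,\phi e_1$ by its four independent components, (ii) run the Codazzi equation and the two biharmonic equations through this parametrisation to show $|H|\equiv 1/2$ (the value quoted in the paper), that the connection form vanishes so $M$ is flat, and that the components of $C$ are rigidly determined, and (iii) actually exclude the competing branches (e.g.\ the minimal and the pseudo-umbilical ones) that the algebra initially allows. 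None of (i)--(iii) is carried out, and the final integration step --- which produces the specific frequencies $1$ and $\sqrt 2$ and the $2$-type decomposition $\Delta\Phi_p=\Phi_p$, $\Delta\Phi_q=3\Phi_q$ --- depends on the explicit output of (ii). A minor additional imprecision: the uniqueness you would obtain is up to an isometry of $\s^5$ preserving the Sasakian structure (a unitary transformation), not an arbitrary rigid motion of $\R^6$. As it stands the proposal is a correct plan for Sasahara's proof, not a proof.
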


The map $\varphi$ induces a full proper biharmonic Legendre embedding
of the flat torus ${\mathbb T}^2=\mathbb{S}^1\times\mathbb{S}^1(1/\sqrt 2)$
into $\mathbb{S}^5$. This embedding, still denoted by $\varphi$, has constant mean curvature $|H|=1/2$, it
is not pseudo-umbilical and its mean curvature vector field is not
parallel. Moreover, $\Phi=\Phi_p+\Phi_{q}$, where
$$
\Phi_{p}(\gamma,\vartheta)=\frac{1}{\sqrt 2}(e^{{\mathrm i}\gamma},0,0),
$$
$$
\Phi_{q}(\gamma,\vartheta)=\frac{1}{\sqrt 2}\big(0,{\mathrm i}e^{-{\mathrm i}\gamma}\sin(\sqrt 2 \vartheta),
{\mathrm i}e^{-{\mathrm i}\gamma}\cos(\sqrt 2 \vartheta) \big),
$$
and $ \Delta \Phi_{p}=\Phi_{p}$, $ \Delta \Phi_{q}=3\Phi_{q}$.
Thus $\Phi$ is a $2$-type mass-symmetric immersion in
$\mathbb{R}^6$ with eigenvalues $1$ and $3$ and order $[1,3]$ (see \cite{BFO, S1} for more details).
Our goal is to determine the index and the nullity of the above
embedding. We shall prove:
\begin{theorem}\label{th: Sasahara_T2_S5}
Let $\varphi:{\mathbb T}^2=\mathbb{S}^1\times\mathbb{S}^1(1/\sqrt
2)\to\mathbb{S}^5$ be the proper biharmonic Legendre embedding. Then
\[
\Index(\varphi)= 11 \quad {\rm and} \quad \nul(\varphi)= 18 \,.
\]
\end{theorem}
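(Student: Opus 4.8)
The plan is to compute the full spectrum of the iterated Jacobi operator $I_2$ for the Sasahara embedding $\varphi:{\mathbb T}^2 \to \s^5$ and then read off index and nullity by counting negative and zero eigenvalues with multiplicity. The first step is to exploit that the domain is a flat torus and that the pull-back bundle $\varphi^{-1}T\s^5$ is parallelizable (as emphasized in the introduction for precisely these examples). Using the Legendre structure and the explicit parametrization $\Phi(\gamma,\vartheta)$, I would construct a global orthonormal frame $\{E_1,\ldots,E_5\}$ of $\varphi^{-1}T\s^5$ adapted to the geometry: two tangential directions $d\varphi(\partial_\gamma), d\varphi(\partial_\vartheta)$ (suitably normalized, recalling the $1/\sqrt2$ scaling of the second $\s^1$ factor), the mean curvature direction, and the remaining normal directions. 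Because $\Phi$ is a $2$-type mass-symmetric immersion with $\Delta\Phi_p=\Phi_p$ and $\Delta\Phi_q=3\Phi_q$, the connection coefficients $\nabla^\varphi_{e_i}E_j$ should have constant coefficients in this frame, which is what makes the whole computation tractable.

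The second step is to write $I_2$ explicitly in this frame using formula \eqref{I2-general-case}, specialized to the present $\varphi$. Here I would plug in the known quantities: $|d\varphi|^2$ and $|\tau(\varphi)|^2$ are constants, $\tau(\varphi)$ is the (constant-length, explicitly known) tension field of the biharmonic embedding, and $d\tau(\varphi)$ is computed from the frame. Expanding an arbitrary section $V=\sum_{j} v_j(\gamma,\vartheta)\,E_j$ and using $\overline{\Delta}$ from \eqref{roughlaplacian}, the operator $I_2(V)=0$ (and more generally $I_2(V)=\lambda V$) turns into a system of constant-coefficient linear PDEs of order $4$ for the component functions $v_j$. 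This is the heart of the computation and almost certainly the step that has to be cross-checked with Mathematica, as the authors indicate.

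The third step is separation of variables via Fourier analysis on ${\mathbb T}^2$. Since all coefficients are constant, I would expand each $v_j$ in the Fourier basis $e^{i(m\gamma + p\vartheta)}$ (with the appropriate $\vartheta$-frequencies dictated by the $1/\sqrt2$ radius), so that $I_2$ acts on each Fourier mode as a $5\times5$ symmetric matrix $A_{m,p}$ depending polynomially on the integer frequencies. The eigenvalues of $I_2$ are then the eigenvalues of the $A_{m,p}$ as $(m,p)$ ranges over the dual lattice, and $\Index(\varphi)$ and $\nul(\varphi)$ are obtained by summing, over all $(m,p)$, the numbers of negative and zero eigenvalues of $A_{m,p}$ respectively. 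The decisive simplification I expect is that $A_{m,p}$ has negative or zero eigenvalues only for finitely many small frequencies, so the count is finite and reduces to a case analysis over a bounded set of $(m,p)$; the contributions then add up to $\Index=11$ and $\nul=18$.

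The main obstacle will be the bookkeeping in the second and third steps: correctly computing the (constant) connection coefficients and the tension-field terms so that the $5\times5$ symbol matrix $A_{m,p}$ is exact, and then carefully enumerating the finitely many frequency pairs that yield nonpositive eigenvalues without double-counting the multiplicities coming from the $\pm(m,p)$ symmetry of the real Fourier modes. Unlike Example~\ref{example-mapstoro-to-sfera}, where the rank-$2$ target gives a $2$-dimensional fiber and cleaner closed-form eigenvalues $\lambda^{\pm}_{m,n}$, here the rank-$5$ fiber means the characteristic polynomial of $A_{m,p}$ is quintic and will generally not factor nicely, so I anticipate that the block structure inherited from the Legendre/Sasakian decomposition (splitting the frame into tangential, mean-curvature, and complementary normal parts) must be used to reduce the quintic to manageable blocks before the eigenvalue signs can be determined by hand.
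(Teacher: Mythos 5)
Your overall strategy coincides with the paper's: parallelize $\varphi^{-1}T\s^5$ by the frame $U_1,U_2,\phi(U_1),\phi(U_2),\xi$, express $I_2$ with constant coefficients in that frame (this is Proposition~\ref{prop-Legendre}, taken from \cite{BFO}), decompose sections by Fourier modes on ${\mathbb T}^2$ (Laplace eigenvalues $\lambda=m^2+2n^2$), and count negative and zero eigenvalues of the resulting finite matrices. The contributions of $S^{\lambda_0}$, $S^{m,0}$, $S^{0,n}$, $S^{1,1}$ and $S^{2,1}$, already computed in \cite{BFO}, add up to exactly $11$ and $18$, so the entire new content of the theorem is that \emph{no other} $S^{m,n}$ with $m,n\geq 1$ contributes.

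That last step is where your proposal has a genuine gap. You assert that the symbol matrix ``has negative or zero eigenvalues only for finitely many small frequencies,'' so that the count terminates, but you give no mechanism for proving it; and the mechanism you anticipate --- using the Legendre/Sasakian splitting of the frame to break the quintic characteristic polynomial into smaller blocks --- is not available. The restriction of $I_2$ to $S^{m,n}$ is a $20\times 20$ matrix whose characteristic polynomial is $\left[P_5(x)\right]^4$ for a quintic $P_5$ that does not factor through the tangential/normal decomposition: the five frame directions are genuinely coupled by the off-diagonal entries $\pm 4\sqrt2\,mn$, $\pm 4\sqrt2\,n\lambda$, $\pm 4\sqrt2\,n(\lambda+1)$, $\pm 4m(\lambda+1)$, $\pm 4m\lambda$. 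The paper's actual device is Lemma~\ref{lemma-pol-grado-5}: by Descartes' rule of signs, $P_5$ has no nonpositive root whenever $[2,1]<[m,n]$ or $[1,2]\leq[m,n]$, which requires verifying the sign pattern $a_5<0$, $a_4\geq0$, $a_3\leq0$, $a_2\geq0$, $a_1\leq0$, $a_0>0$ for coefficients that are polynomials of degree up to $20$ in $(m,n)$ containing monomials of both signs; this is handled by a nontrivial pairwise regrouping of monomials valid for $[3,3]<[m,n]$ together with a finite direct check for $[m,n]\leq[3,3]$. Without this positivity argument (or an equivalent quantitative substitute), your claim that only finitely many frequency pairs matter, and hence that the totals are $11$ and $18$, is unsupported.
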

\begin{remark} Our result completes the analysis of \cite{BFO}, where it was shown that $\Index(\varphi)\geq 11$ and $\nul(\varphi)\geq 18$.
\end{remark}
\begin{remark} The index and nullity of the biharmonic immersions into spheres that derive from the minimal generalised Veronese immersions have been estimated in \cite{LO}. These maps are pseudo-umbilical immersions with parallel mean curvature vector field and do not have parallelizable pull-back bundle.
\end{remark}
\end{example}
\begin{example}\label{example-noncompact} In this example we study  the notion of stability when the domain is not compact. In this case, the most natural approach is to study the second variation \eqref{Hessian-definition} assuming that $\varphi_{t,s}  = \varphi$ outside a compact set. Variations of this type are called \textit{compactly supported variations}. Using this type of variations, we study the Hessian bilinear form
\begin{equation}\label{hessian-noncompact}
H(E_2)_{(\varphi; D)} (V,W)= \left . \frac{\partial^2}{\partial t \partial s}
\right |_{(t,s)=(0,0)} \hspace{-5mm} E_2 (\varphi_{t,s};D)= \int_D \langle I_2(V),W \rangle  dv_M=\int_M \langle I_2(V),W \rangle  dv_M\,,
\end{equation}
where $D$ is a compact set (with smooth boundary), $\varphi_{t,s}=\varphi$ outside $D$ and the vector fields $V,W$ are defined precisely as in \eqref{V-W} and so they are sections of $\varphi^{-1}TN$ which vanish outside $D$. 
In particular, if $N=\s^n$, then the explicit expression of $I_2(V)$ can be computed again by using the divergence theorem, now for compactly supported vector fields, and we get the same formula \eqref{I2-general-case}. 
The spectrum of $I_2$ is not discrete in general, but we can say that a biharmonic map $\varphi$ is \textit{strictly stable} if
$H(E_2)_{(\varphi; D)} (V,V)>0$ for all nontrivial compactly supported vector fields $V \in \mathcal{C}\left (\varphi^{-1}TN\right)$.
\begin{remark}
It can be proved that  when $M$ is not compact and $N$ is flat, then any proper biharmonic map $\varphi: M\to N$ is stable, that is $H(E_2)_{(\varphi; D)} (V,V)\geq 0$ for all  compactly supported vector fields $V \in \mathcal{C}\left (\varphi^{-1}TN\right)$.
\end{remark}

Now we can describe our example. Let $\varphi\,: \R \to \s^2$ be the proper biharmonic map defined by
\begin{equation}\label{A-harmonic-examples}
 \gamma \mapsto \, \left ( \cos(A(\gamma)),\,\sin(A(\gamma)), \,0\right ) \in \s^2\hookrightarrow  \R^3 \,,
\end{equation}
with
\begin{equation}\label{A-gamma}
A(\gamma)= a \gamma^3+b \gamma^2+c \gamma +d \,,
\end{equation}
where $a,b,c,d$ are real numbers such that $a^2+b^2>0$. We shall prove:
\begin{theorem}\label{th:noncompact} Let $\varphi\,: \R \to \s^2$ be the proper biharmonic map defined in \eqref{A-harmonic-examples}. Assume that
\begin{equation}\label{condiz-noncompact}
{\rm either} \,\, a=0 \quad {\rm or} \,\, \left\{a\neq 0 \,\,{\rm and} \,\, b^2 -3ac \leq 0 \right \}\,.
\end{equation}
Then $\varphi$ is strictly stable.
\end{theorem}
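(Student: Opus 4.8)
The plan is to exploit the fact that the pull-back bundle $\varphi^{-1}T\s^2$ carries a \emph{global parallel} orthonormal frame, which reduces the quartic operator $I_2$ to a decoupled pair of scalar ordinary differential operators in $\gamma$. Writing $A',A''$ for the derivatives of the cubic $A$, I would first introduce
$$e_1(\gamma)=(-\sin A,\cos A,0),\qquad e_2(\gamma)=(0,0,1),$$
an orthonormal frame of $\varphi^{-1}T\s^2$ along the image (the equator). A direct check gives $e_1'=-A'\,\varphi$ and $e_2'=0$, so both fields are parallel, i.e. $\nabla^\varphi_{\partial_\gamma}e_i=0$; consequently $\overline{\Delta}(g\,e_i)=-g''\,e_i$ for every function $g$. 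In this frame one has $d\varphi(\partial_\gamma)=A'\,e_1$, $\tau(\varphi)=A''\,e_1$, $|d\varphi|^2=(A')^2$ and $|\tau(\varphi)|^2=(A'')^2$, while $A^{(4)}=0$ encodes biharmonicity and $a^2+b^2>0$ encodes properness.

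Next I would substitute $V=f_1 e_1+f_2 e_2$, with $f_1,f_2$ compactly supported, into formula \eqref{I2-general-case}. Since the frame is parallel and $\dimension M=1$, every ``trace'' in that formula collapses to a single product and every occurrence of $\overline{\Delta}$ becomes $-\partial_\gamma^2$. Collecting the $e_1$- and $e_2$-components, I expect the two equations to decouple and, after the numerous lower-order terms cancel, the tangential part to reduce to the strikingly simple expression
$$\langle I_2(V),e_1\rangle=f_1^{(4)},\qquad \langle I_2(V),e_2\rangle=f_2^{(4)}+2(A')^2 f_2''+4A'A''f_2'+\big((A')^4+3(A'')^2+24a\,A'\big)f_2.$$
Verifying these cancellations (in particular the complete collapse of the $e_1$-component) is the longest, purely computational, part of the argument.

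I would then form $H(E_2)_{(\varphi;D)}(V,V)=\int_\R\langle I_2(V),V\rangle\,d\gamma$ and integrate by parts, all boundary terms vanishing by compact support. The tangential contribution is immediately $\int_\R (f_1'')^2\,d\gamma\ge 0$, vanishing only for $f_1\equiv 0$. The crucial step is the normal contribution: here I would \emph{complete the square}, using $\big((A')^2\big)''=2(A'')^2+12a\,A'$, to obtain the identity
$$\int_\R \langle I_2(V),e_2\rangle\, f_2\,d\gamma=\int_\R\big(f_2''+(A')^2 f_2\big)^2\,d\gamma+\int_\R\big((A'')^2+12a\,A'\big)f_2^2\,d\gamma .$$
This rewriting is the heart of the proof: it shows that the manifestly negative term $-2\int_\R (A')^2(f_2')^2$ produced by the second variation is exactly absorbed, leaving a perfect square plus a zeroth-order term whose sign is decisive.

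Finally I would analyse the coefficient $(A'')^2+12a\,A'=72a^2\gamma^2+48ab\,\gamma+4b^2+12ac$. When $a=0$ it equals the positive constant $4b^2>0$ (properness forces $b\neq0$); when $a\neq0$ it is a quadratic in $\gamma$ with positive leading coefficient whose discriminant is $1152\,a^2(b^2-3ac)$, hence nonnegative on all of $\R$ precisely under hypothesis \eqref{condiz-noncompact}. In every case its zero set has measure zero, so $\int_\R\big((A'')^2+12a\,A'\big)f_2^2\,d\gamma=0$ forces $f_2\equiv 0$ by continuity; combined with the tangential term this yields $H(E_2)_{(\varphi;D)}(V,V)>0$ for every nontrivial compactly supported $V$, i.e. strict stability. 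The main obstacle is organising the computation of $I_2(V)$ so that the decoupling and the cancellations in the $e_1$-component become transparent, and then spotting the completion of the square that ties positivity to the quantity $b^2-3ac$.
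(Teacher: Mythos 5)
Your proposal is correct and follows essentially the same route as the paper: the same parallel frame $\{V_Y,V_\eta\}=\{(-\sin A,\cos A,0),(0,0,1)\}$ along the equator, the same decoupled expressions $I_2(f_1V_Y)=f_1^{(4)}V_Y$ and $I_2(f_2V_\eta)=[f_2^{(4)}+2(A')^2f_2''+4A'A''f_2'+(4A'''A'+3(A'')^2+(A')^4)f_2]V_\eta$, and the same integration by parts completing the square to $\int\big[(f_1'')^2+\big(f_2''+(A')^2f_2\big)^2+\big((A'')^2+2A'''A'\big)f_2^2\big]\,d\gamma$, with $(A'')^2+2A'''A'=72a^2\gamma^2+48ab\gamma+4b^2+12ac\ge 0$ exactly under \eqref{condiz-noncompact}. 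Your closing argument for strictness (the zero set of the coefficient has measure zero, so $f_2\equiv 0$) is a slightly more explicit justification than the paper gives, but the substance is identical.
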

We point out that this is the first example of a strictly stable, proper biharmonic map into a sphere. In general, according to a result of Jiang \cite{Jiang}, we know that, when $M$ is compact and the tension field $\tau(\varphi)$ is orthogonal to the image, then any proper biharmonic map $\varphi:M\to \s^n$ is unstable. By contrast, in the example of Theorem~\ref{th:noncompact} the tension field is tangent to the image of the map.
\end{example}
\section{Proof of the results}\label{proofs}
We shall first prove Theorems\link\ref{Spectrum-theorem-toro-sfera} and \ref{Index-theorem-toro-sfera}. The first step is to derive an explicit formula for the operator $I_2:\mathcal{C}\left(\varphi_k^{-1} T\s^2\right) \to \mathcal{C}\left(\varphi_k^{-1} T\s^2\right)$ using its explicit expression \eqref{I2-general-case}. To this purpose, it is convenient to introduce two suitable vector fields along $\varphi_k$. More specifically, using coordinates $\left(y^1,y^2,y^3 \right)$ on $\R^3$, we define
\begin{equation}\label{definizioneY-eta}
Y=\sqrt 2 \,\left (-y^2,y^1,0 \right ) \quad {\rm and} \quad \eta=
\left(y^1,y^2,\,-\,\frac{1}{\sqrt 2} \right)\,.
\end{equation}
From a geometric viewpoint, we observe that the image of $\varphi_k$ is a circle $\s^1 (1 /\sqrt 2)$ into $\s^2$. Then the restriction of $Y$ to the circle provides a unit section of $T\s^1 (1 /\sqrt 2)$, while $\eta$ gives rise to a unit section of the normal bundle of $\s^1 (1 /\sqrt 2)$ into $\s^2$. For our future purposes, we shall use the following elementary calculations:
\begin{equation}\label{IIfund-form}
B(Y,Y)=-\eta \,\, ; \quad A(Y)=-Y \,\,,
\end{equation}
where $B$ and $A$ denote the second fundamental form and the shape operator respectively. Then, we set
\begin{equation}\label{def-V1 e V2}
V_Y=Y \left (\varphi_k \right )\quad {\rm and} \quad V_\eta= \eta \left ( \varphi_k \right )\,.
\end{equation}
The vectors $V_Y$, $V_\eta$ provide an orthonormal basis on $T\s^2$ at each point of the image of $\varphi_k$ and it is easy to conclude that each section $V \in \mathcal{C}\left( \varphi_k^{-1}T\s^2\right )$ can be written as
\begin{equation}\label{general-section-toro-sfera}
V=f_1 \,V_Y +f_2\,V_\eta \,,
\end{equation}
where $f_j\in C^\infty \left ( {\mathbb T}^2 \right)$, $j=1,2$. For our purposes, it shall be sufficient to study in detail the case that the functions $f_j$ are eigenfunctions of the Laplacian. More precisely, let
\[
\Delta= -\left (\frac{\partial^2}{\partial \gamma^2}+ \frac{\partial^2}{\partial \vartheta^2}\right )
\]
be the Laplace operator on ${\mathbb T}^2$ and denote by $\lambda_i,\,i \in \n$, its spectrum. We define
\begin{equation}\label{sottospazi-S-lambda}
 S^{\lambda_i}=\left \{ f_1\,V_Y \,\,:\,\, \Delta f_1= \lambda_i f_1 \right \} \oplus \left \{ f_2\,V_\eta \,\,:\,\, \Delta f_2= \lambda_i f_2 \right \}
 \end{equation}
As in \cite{LO},  $S^{\lambda_i} \perp S^{\lambda_j}$ if $i \neq j$ and $\oplus_{i=0}^{+\infty}\, S^{\lambda_i}$ is dense in
$\mathcal{C}\left( \varphi_k^{-1}T\s^2\right )$ (note that the scalar product which we use on sections of $ \varphi_k^{-1}T\s^2$ is the standard $L^2$-inner product). Our first key result is:
\begin{proposition}\label{proposizione-I-esplicito-toro-sfera} Assume that $f\in C^{\infty}\left ( {\mathbb T}^2 \right )$ is an eigenfunction of $\Delta$ with eigenvalue $\lambda$. Then
\begin{equation}\label{prima-espressione-I-toro-sfera}
I_2 (f V_Y)= \lambda \left ( \lambda+k^2 \right )fV_Y -2 k^2\,f_{\gamma \gamma}V_Y+2 \sqrt 2 k \lambda f_{\gamma}V_\eta
\end{equation}
and
\begin{equation}\label{seconda-espressione-I-toro-sfera}
I_2 (f V_\eta)= \left ( \lambda^2-k^4 \right )fV_\eta -2 k^2 f_{\gamma \gamma}V_\eta-2 \sqrt 2 k \lambda f_{\gamma} V_Y \,.
\end{equation}
\end{proposition}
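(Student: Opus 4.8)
The plan is to evaluate the general expression \eqref{I2-general-case} for $I_2$ directly on the two families of sections $fV_Y$ and $fV_\eta$, where $f$ is a $\Delta$-eigenfunction with eigenvalue $\lambda$, since by \eqref{general-section-toro-sfera} and \eqref{sottospazi-S-lambda} these span a dense subspace. Every ingredient of \eqref{I2-general-case} is controlled by the elementary geometry of the image circle $\s^1(1/\sqrt 2)\subset\s^2$ encoded in \eqref{IIfund-form}. First I would record the data for $\varphi_k$. Since $\varphi_k$ depends only on $\gamma$, one finds $d\varphi_k(\partial_\gamma)=\tfrac{k}{\sqrt 2}V_Y$ and $d\varphi_k(\partial_\vartheta)=0$, hence $|d\varphi_k|^2=k^2/2$; projecting the $\R^3$-Laplacian onto $T\s^2$ gives $\tau(\varphi_k)=-\tfrac{k^2}{2}V_\eta$, so that $|\tau(\varphi_k)|^2=|d\varphi_k|^4=k^4/4$, and differentiating yields $d\tau(\varphi_k)(\partial_\gamma)=-\tfrac{k^3}{2\sqrt 2}V_Y$, $d\tau(\varphi_k)(\partial_\vartheta)=0$, whence $\langle d\tau(\varphi_k),d\varphi_k\rangle=-k^4/4$.

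The heart of the proof is the behaviour of the frame $\{V_Y,V_\eta\}$ under the pull-back connection. Projecting the $\R^3$-derivatives of \eqref{definizioneY-eta} onto $T\s^2$ (equivalently, reading off the shape operator and second fundamental form in \eqref{IIfund-form}) produces the structural relations
\[
\nabla^{\varphi}_{\partial_\gamma}V_Y=-\tfrac{k}{\sqrt 2}\,V_\eta,\qquad \nabla^{\varphi}_{\partial_\gamma}V_\eta=\tfrac{k}{\sqrt 2}\,V_Y,\qquad \nabla^{\varphi}_{\partial_\vartheta}V_Y=\nabla^{\varphi}_{\partial_\vartheta}V_\eta=0,
\]
so that the frame rotates at constant rate in the $\gamma$-direction and is parallel in the $\vartheta$-direction. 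Feeding this into \eqref{roughlaplacian} (with the flat frame $\{\partial_\gamma,\partial_\vartheta\}$, so the $\nabla^M_{e_i}e_i$ terms drop out) gives, for any $\lambda$-eigenfunction $f$, the coupled formulas $\overline{\Delta}(fV_Y)=(\lambda+\tfrac{k^2}{2})fV_Y+\sqrt 2\,k\,f_\gamma V_\eta$ and $\overline{\Delta}(fV_\eta)=(\lambda+\tfrac{k^2}{2})fV_\eta-\sqrt 2\,k\,f_\gamma V_Y$. A useful observation I would exploit repeatedly is that $\partial_\gamma$ commutes with $\Delta$ on $\t^2$, so $f_\gamma$ is again a $\lambda$-eigenfunction; this lets me iterate the two formulas to obtain $\overline{\Delta}^2(fV_Y)$ and $\overline{\Delta}^2(fV_\eta)$ without leaving the eigenfunction class, picking up the off-diagonal $V_Y\leftrightarrow V_\eta$ coupling explicitly.

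With these in hand I would compute each remaining summand of \eqref{I2-general-case} one by one, using the orthogonality $\langle V_Y,V_\eta\rangle=0$ to kill many trace terms. For $V=fV_Y$ the bracketed combination in the second summand, $\mathrm{trace}\langle V,d\varphi\cdot\rangle d\varphi\cdot-|d\varphi|^2V$, vanishes identically, whereas for $V=fV_\eta$ it equals $-|d\varphi|^2 fV_\eta$; in each case a different subset of the trace terms drops out. The curvature-type scalars reduce to multiples of $k^4$ via the data above, and the term $\langle\tau(\varphi),V\rangle\tau(\varphi)$ vanishes for $V=fV_Y$ but is nonzero for $V=fV_\eta$, which (together with the different trace pattern) is what distinguishes the diagonal coefficient $\lambda^2-k^4$ in \eqref{seconda-espressione-I-toro-sfera} from $\lambda(\lambda+k^2)$ in \eqref{prima-espressione-I-toro-sfera}. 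Summing the thirteen contributions, the $(\lambda+\tfrac{k^2}{2})$-terms from $\overline{\Delta}^2$ and from $-|d\varphi|^2\overline{\Delta}V$ recombine, and the identity $(\lambda+\tfrac{k^2}{2})^2-\tfrac{k^4}{4}=\lambda(\lambda+k^2)$ produces the stated diagonal coefficients, while in the off-diagonal direction the $\sqrt 2\,k^3 f_\gamma$ contributions cancel, leaving only $\pm\,2\sqrt 2\,k\lambda f_\gamma$.

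The main obstacle is not conceptual but organizational: faithfully tracking the $V_Y\leftrightarrow V_\eta$ coupling introduced by the connection, especially inside $\overline{\Delta}^2$ and the two trace terms involving $\tau(\varphi)$ and $d\tau(\varphi)$, and verifying that the numerous $k^4$-proportional scalars and the two $\sqrt 2\,k^3 f_\gamma$ off-diagonal terms cancel exactly so that only the clean expressions \eqref{prima-espressione-I-toro-sfera} and \eqref{seconda-espressione-I-toro-sfera} survive. This is precisely where sign and coefficient slips are most likely, and it is the step for which an independent symbolic check is worthwhile.
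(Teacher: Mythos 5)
Your proposal is correct and follows essentially the same route as the paper: the paper's proof likewise computes $d\varphi_k(\partial_\gamma)=\tfrac{k}{\sqrt2}V_Y$, $\tau(\varphi_k)=-\tfrac{k^2}{2}V_\eta$, derives the rotation relations $\nabla^\varphi_{\partial_\gamma}V_Y=-\tfrac{k}{\sqrt2}V_\eta$, $\nabla^\varphi_{\partial_\gamma}V_\eta=\tfrac{k}{\sqrt2}V_Y$ from \eqref{IIfund-form}, obtains exactly your formulas for $\overline{\Delta}(fV_Y)$ and $\overline{\Delta}(fV_\eta)$, and then evaluates the thirteen terms of \eqref{I2-general-case} one by one in a sequence of lemmata before summing. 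The cancellations you highlight (the two $\tfrac{\sqrt2}{2}k^3f_\gamma$ off-diagonal terms and the identity $(\lambda+\tfrac{k^2}{2})^2-\tfrac{k^4}{4}=\lambda(\lambda+k^2)$) are precisely what occurs in the paper's final summation.
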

\begin{proof} We recall that the definition of the map $\varphi_k$ was given in \eqref{equivdatoroasfera}. The vector fields $\partial / \partial \gamma$ and $\partial / \partial \vartheta$ quotient to vector fields tangent to $\t^2$ forming a global orthonormal frame field of ${\mathbb T}^2$ and we easily find:
\begin{equation}\label{dfi}
d\varphi_k \left (\frac{\partial}{\partial \gamma} \right )= k \,\frac{\sqrt 2}{2}\, Y \left ( \varphi_k \right )=
k \,\frac{\sqrt 2}{2}\, V_Y\quad {\rm and} \quad
d\varphi_k \left (\frac{\partial}{\partial \vartheta} \right )= 0 \,,
\end{equation}
where we have used the vector fields introduced in \eqref{definizioneY-eta} and \eqref{def-V1 e V2}. In order to complete the proof of Proposition\link\ref{proposizione-I-esplicito-toro-sfera} we need to compute all the terms which appear in formula \eqref{I2-general-case}. This shall be done by means of a series of lemmata (to simplify notation, in these lemmata we shall write $\varphi$ instead of $\varphi_k$).
\begin{lemma}\label{lemma1}
\begin{equation}\label{rough-V1}
\overline{\Delta} V_Y=\frac{1}{2}\,k^2 \,V_Y \,.
\end{equation}
\end{lemma}
\begin{proof}[Proof of Lemma~\ref{lemma1}] In general, for $X \in \mathcal{C}(T\,{\mathbb T}^2)$ we have:
\begin{eqnarray}\label{1-lemma-1}
\nabla_X^\varphi V_Y&=&\nabla_X^\varphi Y(\varphi)=\nabla_{d\varphi(X)}^{\s^2} \,Y\\ \nonumber
&=&\nabla_{d\varphi(X)}^{\s^1(1/\sqrt 2)}\, \,Y +B(d\varphi(X),Y)\,. \nonumber
\end{eqnarray}
If we apply \eqref{1-lemma-1} to $X=\partial / \partial \gamma$ we easily obtain
\begin{equation}\label{2-lemma-1}
\nabla_{\partial / \partial \gamma}^\varphi \,V_Y = \frac{\sqrt 2}{2}\,k\,B(Y,Y)=-\,\frac{\sqrt 2}{2}\,k\,V_{\eta}\,,
\end{equation}
where for the last equality we have used \eqref{IIfund-form}. Next,
\begin{eqnarray}\label{3-lemma-1}
\nabla_{\partial / \partial \gamma}^\varphi \,\,\left (\nabla_{\partial / \partial \gamma}^\varphi \,V_Y \right )&=&-\,\frac{\sqrt 2}{2}\,k\,
\nabla_{\partial / \partial \gamma}^\varphi \,V_{\eta}\\ \nonumber
&=&-\,\frac{\sqrt 2}{2}\,k\,
\nabla_{(\sqrt 2/2)kY}^{\s^2} \,\eta\\ \nonumber
&=&-\,\frac{1}{2}\,k^2\,(-A(Y)+0)\\ \nonumber
&=& -\,\frac{1}{2}\,k^2\,V_Y \,, \nonumber
\end{eqnarray}
where for the last equality we have used \eqref{IIfund-form}. Since $d\varphi (\partial / \partial \vartheta)$ vanishes, \eqref{rough-V1} follows immediately from \eqref{3-lemma-1} (note the sign convention).
\end{proof}
\begin{lemma}\label{lemma2}
\begin{equation}\label{rough-V2}
\overline{\Delta} V_\eta=\frac{1}{2}\,k^2 \,V_\eta \,.
\end{equation}
\end{lemma}
\begin{proof}[Proof of Lemma~\ref{lemma2}]
\begin{equation}\label{1-lemma-2}
\nabla_{\partial / \partial \gamma}^\varphi \,V_\eta=
\nabla_{(\sqrt 2/2)kY}^{\s^2} \,\eta= \frac{\sqrt 2}{2}k V_Y \,,
\end{equation}
from which
\begin{equation}\label{2-lemma-2}
\nabla_{\partial / \partial \gamma}^\varphi
\left (\nabla_{\partial / \partial \gamma}^\varphi V_\eta \right )=
\frac{1}{2}\,k^2 \nabla_Y^{\s^2}Y= -\,\frac{1}{2} k^2\,V_\eta \,.
\end{equation}
Now \eqref{rough-V2} follows readily.
\end{proof}
\begin{lemma}\label{lemma3} Assume that $\Delta f= \lambda f$. Then
\begin{eqnarray}\label{rough-fV1-fV2}
{\rm (i)}\,\,\quad \overline{\Delta} (fV_Y)&=&\left ( \lambda+\frac{k^2}{2}\right )fV_Y+ \sqrt 2kf_\gamma V_\eta\\ \nonumber
{\rm (ii)}\,\, \quad \overline{\Delta} (f V_\eta)&=&
\left ( \lambda+\frac{k^2}{2}\right )f V_\eta- \sqrt 2 k f_\gamma V_Y\nonumber
\end{eqnarray}
\end{lemma}
\begin{proof}[Proof of Lemma~\ref{lemma3}]
This lemma can be easily proved by applying the results of Lemmata\link\ref{lemma1} and \ref{lemma2} to the general formula
\begin{equation}\label{general-product-formula}
\overline{\Delta} (f\,V)=(\Delta f)\,V-2\,\nabla_{\nabla f}^{\varphi}V+f\,\overline{\Delta} V \,.
\end{equation}
\end{proof}
Now, we compute the various terms which appear in the formula \eqref{I2-general-case}.
\begin{lemma}\label{lemma4}Assume that $\Delta f= \lambda f$. Then
\begin{equation}\label{doppio-rough-fV1}
\overline{\Delta}^2 (f V_Y)=\left ( \lambda+\frac{k^2}{2}\right )^2 f V_Y-2k^2 f_{\gamma \gamma} V_Y+ 2\sqrt 2 k \left ( \lambda+\frac{k^2}{2}\right )f_\gamma V_\eta \,.
\end{equation}
\end{lemma}
\begin{proof}[Proof of Lemma~\ref{lemma4}] We just need to compute using twice \eqref{rough-fV1-fV2}\link(i) together with the observation that, since $\partial / \partial \gamma$ is a Killing field on ${\mathbb T}^2$, $\Delta f_\gamma= \lambda\,f_\gamma$.
\end{proof}
In the same way, we obtain:
\begin{lemma}\label{lemma5}Assume that $\Delta f= \lambda f$. Then
\begin{equation}\label{doppio-rough-fV2}
\overline{\Delta}^2 (f V_\eta)=\left ( \lambda+\frac{k^2}{2}\right )^2 f V_\eta-2k^2 f_{\gamma \gamma} V_\eta- 2\sqrt 2 k \left ( \lambda+\frac{k^2}{2}\right )f_\gamma V_Y \,.
\end{equation}
\end{lemma}
\begin{lemma}\label{lemma6}
\begin{equation}\label{1-lemma6}
\overline{\Delta}\left ( {\rm trace}\langle f V_Y,d\varphi \cdot \rangle d\varphi \cdot - |d\varphi|^2\,fV_Y \right)=0 \,.
\end{equation}
\end{lemma}
\begin{proof}[Proof of Lemma~\ref{lemma6}]
\[
{\rm trace}\langle fV_Y,d\varphi \cdot \rangle d\varphi \cdot - |d\varphi|^2 fV_Y=\langle fY,\frac{\sqrt 2}{2}kY\rangle  \frac{\sqrt 2}{2}kY -\frac{k^2}{2} fY =0
\]
\end{proof}
\begin{lemma}\label{lemma7}
\begin{equation}\label{1-lemma7}
\overline{\Delta}\left ( {\rm trace}\langle fV_\eta,d\varphi \cdot \rangle d\varphi \cdot - |d\varphi|^2\,fV_\eta \right)=
-\,\frac{k^2}{2}\left ( \lambda+\frac{k^2}{2}\right )f\,V_\eta+ \frac{k^3 \sqrt 2}{2}\,f_\gamma\,V_Y\,.
\end{equation}
\end{lemma}
\begin{proof}[Proof of Lemma~\ref{lemma7}]
\[
{\rm trace}\langle fV_\eta,d\varphi \cdot \rangle d\varphi \cdot - |d\varphi|^2\,fV_\eta=\langle f\eta,\frac{\sqrt 2}{2}kY\rangle \,\frac{\sqrt 2}{2}k V_Y -\frac{k^2}{2} fV_\eta = -\frac{k^2}{2} fV_\eta \,.
\]
Next, using \eqref{rough-fV1-fV2}\link(ii), we obtain \eqref{1-lemma7}.
\end{proof}
\begin{lemma}\label{lemma8}
\begin{equation}\label{1-lemma8}
2\langle d\tau(\varphi),d\varphi \rangle fV_Y+|\tau(\varphi)|^2 f
V_Y=-\,\frac{k^4}{4}\,f\,V_Y
\end{equation}
\end{lemma}
\begin{proof}[Proof of Lemma~\ref{lemma8}]The tension field of an equivariant map of the type \eqref{equivdatoroasfera} is
\[
\tau(\varphi)=\left ( \alpha''- \frac{k^2}{2}\sin (2 \alpha) \right ) \frac{\partial}{\partial \alpha} \,.
\]
Since $\alpha \equiv \pi /4$ we can write, after standard identification, 
\begin{equation}\label{tau}
\tau(\varphi)=- \frac{k^2}{2}\,V_\eta \,.
\end{equation}
Now
\begin{eqnarray}\label{2-lemma8}
2\langle d\tau(\varphi),d\varphi \rangle fV_Y&=&2 \langle\nabla_
{\partial/\partial \gamma}^\varphi \left ( - \frac{k^2}{2} V_\eta\right ), \frac{\sqrt 2}{2}k V_Y \rangle fV_Y \\ \nonumber
&=&-\,\frac{k^4}{2}\,f\,V_Y \,, \nonumber
\end{eqnarray}
from which it is immediate to obtain \eqref{1-lemma8}.
\end{proof}
In a similar way we obtain:
\begin{lemma}\label{lemma9}
\begin{equation}\label{1-lemma9}
2\langle d\tau(\varphi),d\varphi \rangle fV_\eta+|\tau(\varphi)|^2 f
V_\eta=-\,\frac{k^4}{4}\,f\,V_\eta
\end{equation}
\end{lemma}
\begin{lemma}\label{lemma11}
\begin{equation}\label{1-lemma11}
-2\,{\rm trace}\langle fV_Y,d\tau(\varphi) \cdot \rangle d\varphi \cdot=
\frac{k^4}{2}\,f\,V_Y\,.
\end{equation}
\end{lemma}
\begin{proof}[Proof of Lemma~\ref{lemma11}]
\begin{eqnarray}\label{2-lemma11}
-2\,{\rm trace}\langle fV_Y,d\tau(\varphi) \cdot \rangle d\varphi \cdot&=&-2 \langle fV_Y, \nabla_
{\partial/\partial \gamma}^\varphi \left ( - \frac{k^2}{2} V_\eta\right ) \rangle \frac{\sqrt 2}{2}k V_Y \\ \nonumber
&=&\frac{\sqrt 2}{2}k^3 f \langle Y,\frac{\sqrt 2}{2}kY  \rangle  V_Y \\ \nonumber
&=&\frac{k^4}{2}\,f\,V_Y \,,\nonumber
\end{eqnarray}
\end{proof}
\begin{lemma}\label{lemma12}
\begin{equation}\label{1-lemma12}
-2\,{\rm trace}\langle fV_\eta,d\tau(\varphi) \cdot \rangle d\varphi \cdot= 0\,.
\end{equation}
\end{lemma}
\begin{proof}[Proof of Lemma~\ref{lemma12}]
\begin{eqnarray}\label{2-lemma12}
-2\,{\rm trace}\langle fV_\eta,d\tau(\varphi) \cdot \rangle d\varphi \cdot&=&-2 \langle fV_\eta, \nabla_
{\partial/\partial \gamma}^\varphi \left ( - \frac{k^2}{2} V_\eta\right ) \rangle \frac{\sqrt 2}{2}k V_Y \\ \nonumber
&=&\frac{\sqrt 2}{2}k^3 f \langle \eta,\frac{\sqrt 2}{2}kY  \rangle  V_Y =0  \nonumber
\end{eqnarray}
\end{proof}
All the calculations involved in the next two lemmata use the same patterns which we followed so far and so we state directly the relevant results omitting the details of the proofs.
\renewcommand{\arraystretch}{1.3}
\begin{lemma}\label{lemma13}
\[\begin{array}{rcl}
-2 \,{\rm trace} \langle \tau(\varphi),d(fV_Y) \cdot \rangle d\varphi \cdot &=& -\,\frac{k^4}{2}fV_Y\\ \nonumber
- \langle \tau(\varphi),fV_Y \rangle \tau(\varphi)&=&0 \\ \nonumber
{\rm trace} \langle d\varphi \cdot,\overline{\Delta}(fV_Y) \rangle d\varphi \cdot &=&\frac{k^2}{2}\left ( \lambda+\frac{k^2}{2}\right )f V_Y \\ \nonumber
{\rm trace}\langle d\varphi \cdot,\left ( {\rm trace}\langle
fV_Y,d\varphi \cdot \rangle d\varphi \cdot \right ) \rangle d\varphi \cdot&=& \frac{k^4}{4}fV_Y
\end{array}
\]
\[\begin{array}{rcl}
-2 |d\varphi|^2\, {\rm trace}\langle d\varphi \cdot,fV_Y \rangle
 d\varphi \cdot&=&- \frac{k^4}{2}fV_Y \\ \nonumber
 2 \langle d(fV_Y),d\varphi\rangle \tau(\varphi) &=&
 - \frac{k^3 \sqrt 2}{2} f_\gamma V_\eta\\ \nonumber
-|d\varphi|^2 \,\overline{\Delta}(fV_Y)&=& -\,\frac{k^2}{2}\left ( \lambda+\frac{k^2}{2}\right )f\,V_Y- \frac{k^3 \sqrt 2}{2} f_\gamma V_\eta\\ \nonumber
|d\varphi|^4 fV_Y&=&\frac{k^4}{4}fV_Y \nonumber
\end{array}
\]
\end{lemma}
\begin{lemma}\label{lemma14}
\[\begin{array}{rcl}
-2 \,{\rm trace} \langle \tau(\varphi),d(fV_\eta) \cdot \rangle d\varphi \cdot &=& \frac{k^3\sqrt 2}{2}f_\gamma V_Y\\ \nonumber
- \langle \tau(\varphi),fV_\eta \rangle \tau(\varphi)&=&-\,\frac{k^4}{4}fV_\eta \\ \nonumber
{\rm trace} \langle d\varphi \cdot,\overline{\Delta}(fV_\eta) \rangle d\varphi \cdot &=&-\,\frac{k^3\sqrt 2}{2}f_\gamma V_Y \\ \nonumber
{\rm trace}\langle d\varphi \cdot,\left ( {\rm trace}\langle
f V_\eta,d\varphi \cdot \rangle d\varphi \cdot \right ) \rangle d\varphi \cdot&=& 0
\end{array}
\]
\[\begin{array}{rcl}
-2 |d\varphi|^2\, {\rm trace}\langle d\varphi \cdot,fV_\eta \rangle
 d\varphi \cdot&=&0 \\ \nonumber
 2 \langle d(fV_\eta),d\varphi\rangle \tau(\varphi) &=&
 - \frac{k^4}{2}\,f V_\eta\\ \nonumber
-|d\varphi|^2 \,\overline{\Delta}(fV_\eta)&=& -\,\frac{k^2}{2}\left ( \lambda+\frac{k^2}{2}\right )f\,V_\eta+ \frac{k^3 \sqrt 2}{2}\,f_\gamma\,V_Y\\ \nonumber
|d\varphi|^4 fV_\eta&=&\frac{k^4}{4}fV_\eta \,\,.\nonumber
\end{array}
\]
\end{lemma}
\renewcommand{\arraystretch}{1.}
Now we are able to end the proof of Proposition\link\ref{proposizione-I-esplicito-toro-sfera}. As for \eqref{prima-espressione-I-toro-sfera}, it suffices to replace the results of Lemmata\link\ref{lemma4}, \ref{lemma6}, \ref{lemma8}, \ref{lemma11} and \ref{lemma13} into \eqref{I2-general-case} and add up. Similarly, \eqref{seconda-espressione-I-toro-sfera} can be obtained using Lemmata\link\ref{lemma5}, \ref{lemma7}, \ref{lemma9}, \ref{lemma12} and \ref{lemma14}.
\end{proof}
We are now in the right position to prove our main theorems.
\subsection{Proof of Theorem~\ref{Spectrum-theorem-toro-sfera}}
The eigenvalues of $\Delta$ on ${\mathbb T}^2$ have the form $\lambda=m^2+n^2$. In particular, $\lambda_0=0$,
\begin{equation}\label{sottospazi-S-lambda-0}
 S^{\lambda_0}=\left \{ c_1\,V_Y \,\,:\,\, c_1 \in \R\right \} \oplus \left \{ c_2\,V_\eta \,\,:\,\, c_2\in \R\right \}
 \end{equation}
and $\dim \left( S^{\lambda_0} \right)=2$. It follows by a direct application of Proposition\link\ref{proposizione-I-esplicito-toro-sfera} that the restriction of $I_2$ to $S^{\lambda_0}$ gives rise to the eigenvalues $\mu_0=0$ and $\mu_1=-k^4$.
Next, let us consider the case that $\lambda=m^2+n^2 >0$ and denote by $W_{\lambda}$ the corresponding eigenspace. In a similar fashion to \cite{BFO}, we decompose
\begin{equation}\label{sottospazi-W-lambda}
W_{\lambda}=W^{m,0}\oplus_{m,n \geq 1}W^{m,n}\oplus W^{0,n}\,,
 \end{equation}
where it is understood that in \eqref{sottospazi-W-lambda} we have to consider all the possible couples $(m,n)\in \n \times \n$ such that $\lambda=m^2+n^2$. By way of example, if $\lambda=4$, then the possible couples are $(2,0)$ and $(0,2)$. If $\lambda=5$, then the possible couples are $(1,2)$ and $(2,1)$. The subspaces of the type $W^{m,0}$ are $2$-dimensional and are spanned by the functions $\left \{\cos (m\gamma),\sin (m\gamma) \right \}$. Similarly, $W^{0,n}$ is $2$-dimensional and is generated by $\left \{\cos (n\vartheta),\sin (n\vartheta) \right \}$. Finally, the subspaces $W^{m,n}$, with $m,n \geq 1$, have dimension $4$ and are spanned by
\[
\left \{\cos (m\gamma)\cos(n\vartheta),\cos (m\gamma)\sin (n\vartheta),\sin (m\gamma)\cos(n\vartheta),\sin (m\gamma)\sin (n\vartheta)\right \}
\]
Now it becomes natural to define
\begin{equation}\label{sottospazi-S-m,n}
 S^{m,n}=\left \{ f_1\,V_Y \,\,:\,\, f_1 \in W^{m,n} \right \} \oplus \left \{ f_2\,V_\eta \,\,:\,\, f_2 \in W^{m,n} \right \} \,\,.
 \end{equation}
All these subspaces are orthogonal to each other. Moreover, for any positive eigenvalue $\lambda_i$, we have
\begin{equation}\label{decomposizioneS-lambda-in-coppie-m-n}
 S^{\lambda_i}= \oplus_{m^2+n^2=\lambda_i}\,\,S^{m,n}\,.
 \end{equation}
It follows easily from Proposition\link\ref{proposizione-I-esplicito-toro-sfera} that the operator $I_2$ preserves each of the subspaces $S^{m,n}$. Therefore, its spectrum can be computed by determing the eigenvalues of the matrices associated to the restriction of $I_2$ to each of the $S^{m,n}\,$'s. We separate three cases:

\textbf{Case 1:} $S^{m,0}$, $m \geq1$.

In this case, an orthonormal basis of $S^{m,0}$ is given by:
\[
\left \{\frac{\cos (m\gamma)}{\sqrt{2} \,\pi}\,V_Y,\frac{\sin (m\gamma)}{\sqrt{2} \,\pi}\,V_Y,\frac{\cos (m\gamma)}{\sqrt{2} \,\pi}\,V_\eta,\frac{\sin (m\gamma)}{\sqrt{2} \,\pi}\,V_\eta\right \} \,.
\]
Using Proposition\link\ref{proposizione-I-esplicito-toro-sfera} and computing we find that in this case the $(4\times 4)$-matrices associated to the operator $I_2$ are:
\begin{equation}\label{matrici-S-m-0}
\left(
\begin{array}{cccc}
 m^2\left(m^2+3k^2 \right ) & 0 & 0 &  -2\sqrt 2\,k m^3 \\
 0&  m^2\left(m^2+3k^2 \right )&2 \sqrt 2\,k m^3&0\\
 0&2 \sqrt 2\,k m^3 & m^4+2k^2m^2-k^4 &0 \\
 -2 \sqrt 2\,k m^3 & 0&0&m^4+2k^2m^2-k^4  \\
\end{array}
\right)
\end{equation}
The eigenvalues of these matrices are precisely the $\lambda_{m,0}^+\,$'s, $\lambda_{m,0}^-\,$'s indicated in the statement of Theorem\link\ref{Spectrum-theorem-toro-sfera}. Each of them has multiplicity equal to $2$.

\textbf{Case 2:} $S^{0,n}$, $n \geq1$.

In this case, an orthonormal basis of $S^{0,n}$ is given by:
\[
\left \{ \frac{\cos (n\vartheta)}{\sqrt{2} \,\pi}\,V_Y,\frac{\sin (n\vartheta)}{\sqrt{2} \,\pi}\,V_Y,\frac{\cos (n\vartheta)}{\sqrt{2} \,\pi}\,V_\eta,\frac{\sin (n\vartheta)}{\sqrt{2} \,\pi} \,V_\eta\right \} \,.
\]
Using Proposition\link\ref{proposizione-I-esplicito-toro-sfera} and computing it is immediate to find that in this case the $(4\times 4)$-matrices associated to the operator $I_2$ are:
\begin{equation}\label{matrici-S-0-n}
\left(
\begin{array}{cccc}
 n^2\left(n^2+k^2 \right ) & 0 & 0 & 0 \\
 0&  n^2\left(n^2+k^2 \right )&0&0\\
 0&0 & n^4-k^4 &0 \\
 0 & 0&0&n^4-k^4  \\
\end{array}
\right)
\end{equation}
The eigenvalues of these matrices are obviously those indicated with $\lambda_{0,n}^+,\lambda_{0,n}^-$ in the statement of Theorem\link\ref{Spectrum-theorem-toro-sfera}. Each of them has multiplicity equal to $2$.

\textbf{Case 3:} $S^{m,n}$, $m,n \geq1$.

This is the case which requires the biggest computational effort.
An orthonormal basis of $S^{m,n}$ is given by:
\renewcommand{\arraystretch}{1.5}
\[\begin{array}{l}
\left \{ \frac{1}{\pi} \,\cos (m \gamma)\cos (n\vartheta)\,V_Y,
\frac{1}{\pi} \,\cos (m \gamma)\sin (n\vartheta)\,V_Y,
\frac{1}{\pi} \,\sin (m \gamma)\cos (n\vartheta)\,V_Y,
\frac{1}{\pi} \,\sin (m \gamma)\sin (n\vartheta)\,V_Y, \right . \\ \nonumber
\left . \frac{1}{\pi} \,\cos (m \gamma)\cos (n\vartheta)\,V_\eta,
\frac{1}{\pi} \,\cos (m \gamma)\sin (n\vartheta)\,V_\eta,
\frac{1}{\pi} \,\sin (m \gamma)\cos (n\vartheta)\,V_\eta,
\frac{1}{\pi} \,\sin (m \gamma)\sin (n\vartheta)\,V_\eta\right \} \,.
\end{array}
\]
\renewcommand{\arraystretch}{1.}
Using Proposition\link\ref{proposizione-I-esplicito-toro-sfera} and computing we find that in this case the $(8\times 8)$-matrices associated to the operator $I_2$ can be described as follows. Set
\begin{eqnarray*}
A_{m,n,k}&=& \left(3 m^2+n^2\right) k^2+\left(m^2+n^2\right)^2 \\
B_{m,n,k}&= & -k^4+2 m^2 k^2+\left(m^2+n^2\right)^2\\
   C_{m,n,k}&=& 2 \sqrt{2} k m \left(m^2+n^2\right)
\end{eqnarray*}
Then the matrices are:
\begin{equation}\label{matrici-S-m-n}
\left(
\begin{array}{cccccccc}
 A_{m,n,k} & 0 & 0 & 0 & 0 & 0
   & -C_{m,n,k} & 0 \\
 0 & A_{m,n,k} & 0 & 0 & 0 & 0
   & 0 & -C_{m,n,k} \\
 0 & 0 & A_{m,n,k} & 0 & C_{m,n,k} & 0 & 0 & 0 \\
 0 & 0 & 0 & A_{m,n,k}& 0 & C_{m,n,k} & 0 & 0 \\
 0 & 0 & C_{m,n,k} & 0 & B_{m,n,k} & 0 & 0 & 0 \\
 0 & 0 & 0 & C_{m,n,k} & 0 & B_{m,n,k} & 0 & 0 \\
 -C_{m,n,k} & 0 & 0 & 0 & 0 & 0 &B_{m,n,k} & 0 \\
 0 & -C_{m,n,k}& 0 & 0 & 0 & 0 & 0 & B_{m,n,k}
\end{array}
\right)
\end{equation}
The characteristic polynomial of this matrix is:
\[
\left[A_{m,n,k} (\lambda-B_{m,n,k})+\lambda (B_{m,n,k}-\lambda)+C_{m,n,k}^2\right]^4 \,.
\]
Then a straightforward computation shows that the eigenvalues are the $\lambda_{m,n}^{\pm}\,$'s given in the statement of Theorem\link\ref{Spectrum-theorem-toro-sfera}. Each of them has multiplicity equal to $4$ and this ends the proof.
\subsection{Proof of Theorem~\ref{Index-theorem-toro-sfera}}
First, it is obvious that the subspace $S^{\lambda_0}$ yields a contribution of $+1$ for both ${\rm Index}\left( \varphi_k \right)$ and ${\rm Nullity}\left( \varphi_k \right)$.
Next, we examine the subspaces of the type $S^{0,n}$. It is immediate to conclude that their contribution to ${\rm Index}\left( \varphi_k \right)$ is $2(k-1)$, because we have $k-1$ negative eigenvalues of multiplicity $2$. The contribution of $S^{0,n}$ to ${\rm Nullity}\left( \varphi_k \right)$ is always equal to $2$ because $\lambda^{-}_{0,n}$ vanishes if and only if $n=k$, and $\lambda^{+}_{0,n}$ is always positive. The same conclusions hold for the subspaces of the type $S^{m,0}$.
Indeed, it is obvious that $\lambda_{m,0}^+$ is positive for all $m \geq 1$. As for $\lambda_{m,0}^-$, our claim is an immediate consequence of the following lemma:
\begin{lemma}\label{lemma-tecnico1}
If $1\leq m \leq k-1$, then $\lambda_{m,0}^- <0$. If $m=k$, then $\lambda_{m,0}^- =0$. If $m>k$, then $\lambda_{m,0}^- >0$.
\end{lemma}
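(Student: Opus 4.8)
The plan is to read the explicit formula for $\lambda_{m,0}^-$ off Theorem~\ref{Spectrum-theorem-toro-sfera} and reduce the question to a sign comparison. Writing $P=2m^4+5k^2m^2-k^4$ and $Q=k^8+2k^6m^2+k^4m^4+32k^2m^6$, we have $\lambda_{m,0}^-=\frac{1}{2}(P-\sqrt Q)$. Since $m\geq 1$ forces $Q>0$, the square root is a genuine positive real and the sign of $\lambda_{m,0}^-$ equals the sign of $P-\sqrt Q$. The natural move is to clear the radical by comparing $P^2$ with $Q$, keeping in mind that this only controls $P-\sqrt Q$ once the sign of $P$ itself is also known.

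The heart of the argument is the computation of $P^2-Q$. Expanding and subtracting, the $k^8$ terms cancel and I obtain
\[
P^2-Q=4m^8-12k^2m^6+20k^4m^4-12k^6m^2=4m^2\left(m^6-3k^2m^4+5k^4m^2-3k^6\right).
\]
Putting $x=m^2$, the cubic $x^3-3k^2x^2+5k^4x-3k^6$ has $x=k^2$ as a root --- which is exactly what one must expect, since $\lambda_{k,0}^-$ ought to vanish --- and factoring this out gives
\[
P^2-Q=4m^2\left(m^2-k^2\right)\left(m^4-2k^2m^2+3k^4\right).
\]
Because $m^4-2k^2m^2+3k^4=\left(m^2-k^2\right)^2+2k^4>0$, the last factor never vanishes, so $P^2-Q$ carries the same sign as $m^2-k^2$, i.e. the same sign as $m-k$ for positive integers $m,k$.

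It then remains to translate the sign of $P^2-Q$ into the sign of $P-\sqrt Q$, and here I would distinguish the three ranges. For $1\leq m\leq k-1$ we have $P^2<Q$; whether $P\geq 0$ (so that $P=\sqrt{P^2}<\sqrt Q$) or $P<0$ (so that $P<0\leq\sqrt Q$), in both cases $P<\sqrt Q$ and hence $\lambda_{m,0}^-<0$. For $m=k$ we get $P^2=Q$ together with $P=6k^4>0$, so $P=\sqrt Q$ and $\lambda_{k,0}^-=0$. For $m>k$ we have $P^2>Q$ and $P>0$ (since $2m^4>2k^4>k^4$), whence $P>\sqrt Q$ and $\lambda_{m,0}^->0$. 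I expect the only genuine obstacle to be the factorisation of $P^2-Q$: once the radical is squared away the vanishing at $m=k$ is forced, and thereafter the positivity of the quadratic factor together with the elementary sign check on $P$ in the cases $m=k$ and $m>k$ completes the proof.
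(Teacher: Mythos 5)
Your proof is correct, and it takes a genuinely different route from the paper's. The paper homogenises by substituting $m=ck$, writes the relevant quantity as $k^4h(c)$ with $h(c)=-1+5c^2+2c^4-\sqrt{1+2c^2+c^4+32c^6}$, and concludes from the monotonicity claim that $h'(c)>0$ for $c>0$ together with $h(1)=0$; the verification of that monotonicity is left as ``routine analysis.'' You instead rationalise: with $P=2m^4+5k^2m^2-k^4$ and $Q$ the radicand, you compute
\[
P^2-Q=4m^2\left(m^2-k^2\right)\left(m^4-2k^2m^2+3k^4\right),
\]
observe that the last factor equals $\left(m^2-k^2\right)^2+2k^4>0$, and then carefully translate the sign of $P^2-Q$ into the sign of $P-\sqrt{Q}$ (correctly handling the case $P<0$ in the range $m<k$, and checking $P>0$ when $m\geq k$). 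I have verified the expansion and the factorisation; they are right. Your argument is purely algebraic and fully self-contained, which is arguably preferable to the paper's, since it replaces an unverified derivative computation involving a square root by an explicit polynomial identity whose key factor $(m^2-k^2)$ makes the vanishing at $m=k$ transparent. The paper's substitution $m=ck$ has the minor advantage of exhibiting the homogeneity of degree $4$ in $(m,k)$ at a glance, but otherwise buys nothing that your factorisation does not.
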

\begin{proof} [Proof of Lemma~\ref{lemma-tecnico1}]
The eigenvalue $\lambda_{m,0}^- $ has the same sign of the expression
\begin{equation}\label{1}
-k^4+5k^2 m^2+2 m^4- \sqrt{k^8+2 k^6 m^2+k^4
   m^4+32k^2 m^6 }
\end{equation}
If we set $m=ck$ into \eqref{1} we obtain:
\[
k^4 \, \left ( -1+5c^2+2 c^4- \sqrt{1+2 c^2+c^4+32c^6 } \right ) = k^4 \, h(c)\,.
\]
Now a routine analysis shows that $h'(c)$ is positive for $c>0$ and $h(1)=0$, and from this the conclusion of the lemma follows immediately.
\end{proof}
By way of summary, the total contribution of the subspaces $S^{\lambda_0}$, $S^{0,n}$ and $S^{m,0}$ to the index and the nullity of $\varphi_k$ is $1+4(k-1)$ and $5$ respectively. To these values, we have to add the contributions coming from the subspaces of the type $S^{m,n}$ with $m,n \geq 1$. Now, we observe that all the eigenvalues of the type $\lambda_{m,n}^+$ are positive: this follows from the fact that the expression
{\small
\[
-k^4+k^2 \left(5 m^2+n^2\right)+2 \left(m^2+n^2\right)^2 + \sqrt{k^8+2 k^6 \left(m^2+n^2\right)+k^4
   \left(m^2+n^2\right)^2+32 k^2 m^2 \left(m^2+n^2\right)^2}
\]}
vanishes when $m=n=0$ and it is increasing with respect to both $m$ and $n$. 

Therefore, the conclusion of the proof is an immediate consequence of the definition of the functions $f$ and $g$ in \eqref{definizione-f(k)} and \eqref{definizione-g(k)} respectively, together with the fact that each of the $\lambda_{m,n}^-\,$'s has multiplicity $4$.
\begin{remark}
Taking into account the expression of the eigenvectors corresponding to $\mu_0=\lambda^-_{k,0}=\lambda^-_{0,k}=0$ we have the following geometric description of the space where $I_2$ vanishes:
$$
\Big\{d\varphi_k(X)\colon X\; {\rm Killing}\Big\}\oplus W^{0,k} V_{\eta}\oplus \Big\{ \frac{1}{k^2}d\varphi_k(\nabla f)+f V_{\eta}\colon f\in W^{k,0}\Big\}\,.
$$
In a similar way, we can describe the space where $H(E_2)_{\varphi_k}$ is negative definite. For example, if $k=1$, the space where $H(E_2)_{\varphi_1}$ is negative definite is $\left \{ c_2 V_\eta \colon c_2 \in \R \right \}$. If $k=2$ the situation becomes more interesting and here we report the explicit description. More precisely, a computation shows that the space where $H(E_2)_{\varphi_2}$ is negative definite is
\[
\left \{ c_2 V_\eta \colon c_2 \in \R \right \} \oplus W^{0,1}V_\eta \oplus \mathcal{A}_{1,0}\oplus \mathcal{A}_{1,1}\oplus \mathcal{A}_{2,1} \,,
\]
where
\begin{eqnarray*}
\mathcal{A}_{1,0}&=&\left \{ \frac{1}{4}(-5+\sqrt{33})d\varphi_2(\nabla f)+fV_\eta \,\,\colon f \in W^{1,0}\right \}\, ;\\
\mathcal{A}_{1,1}&=&\left \{ \frac{1}{4}(-3+\sqrt{17})d\varphi_2(\nabla f)+fV_\eta \,\,\colon f \in W^{1,1}\right \}\, ;\\
\mathcal{A}_{2,1}&=&\left \{ \frac{1231+41\sqrt{881}}{80\,(59+2\sqrt{881})}\,\,d\varphi_2(\nabla f)+fV_\eta \,\,\colon f \in W^{2,1}\right \} \,.
\end{eqnarray*}

Similar computations can also be performed in the cases $k \geq3$, but they are rather long and so we omit further details.
\end{remark}
\subsection{Proof of Theorem\link\ref{Index-theorem-r-harmonic-circles}}
Again, we use vector fields $V_Y$ and $V_\eta$ defined precisely as in \eqref{def-V1 e V2}.
The vectors $V_Y$, $V_\eta$ provide an orthonormal basis on $T\s^2$ at each point of the image of $\varphi_k$ and it is easy to conclude that each section $V \in \mathcal{C}\left( \varphi_k^{-1}T\s^2\right )$ can be written as
\begin{equation}\label{general-section-toro-sfera-bis}
V=f_1 \,V_Y +f_2\,V_\eta \,,
\end{equation}
where $f_j\in C^\infty \left ( \s^1 \right)$, $j=1,2$. The version of Proposition\link\ref{proposizione-I-esplicito-toro-sfera} in this context is:
\begin{proposition}\label{proposizione-I-esplicito-s1-sfera} Assume that $f\in C^{\infty}\left ( \s^1 \right )$ is an eigenfunction of $\Delta$ with eigenvalue $\lambda$. Then
\begin{equation}\label{prima-espressione-I-s1-sfera}
I_2 (fV_Y)= \lambda \,\left ( \lambda+3k^2 \right )fV_Y +2 \sqrt 2 k \lambda f_{\gamma}V_\eta
\end{equation}
and
\begin{equation}\label{seconda-espressione-I-s1-sfera}
I_2 (fV_\eta)= \left ( \lambda^2-k^4 +2k^2\lambda\right )fV_\eta -2 \sqrt 2 k \lambda f_{\gamma} V_Y \,.
\end{equation}
\end{proposition}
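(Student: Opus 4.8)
The plan is to observe that, geometrically, this is the very same configuration already analysed in Example~\ref{example-mapstoro-to-sfera}: the proper biharmonic map \eqref{r-harmonic-examples} has image the circle $\s^1(1/\sqrt2)\subset\s^2$, it carries the same adapted orthonormal frame $V_Y,V_\eta$ along the image, and in fact the torus map \eqref{equivdatoroasfera-bis*} is nothing but \eqref{r-harmonic-examples} made constant in the variable $\vartheta$. First I would record the basic data, which are unchanged because the ambient geometry is identical: the analogues of \eqref{dfi}, \eqref{IIfund-form} and \eqref{tau} give $d\varphi_k(\partial/\partial\gamma)=\frac{\sqrt2}{2}k\,V_Y$, $|d\varphi_k|^2=k^2/2$, $B(Y,Y)=-\eta$, $A(Y)=-Y$ and $\tau(\varphi_k)=-\frac{k^2}{2}V_\eta$.

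The key step is to justify that passing from ${\mathbb T}^2$ to $\s^1$ merely deletes the direction $\partial/\partial\vartheta$, and that this direction is invisible to every term of \eqref{I2-general-case}. Indeed, on the torus one had $d\varphi_k(\partial/\partial\vartheta)=0$, and the sections $fV_Y,fV_\eta$ with $f\in C^\infty(\s^1)$ are constant in $\vartheta$; hence every covariant derivative in the direction $\partial/\partial\vartheta$ vanishes. Consequently the rough Laplacian \eqref{roughlaplacian} of $fV_Y$ and of $fV_\eta$ computed on $\s^1$ coincides with the one computed on ${\mathbb T}^2$, and likewise each trace over the domain frame and each inner product appearing in \eqref{I2-general-case} receives no contribution from $\partial/\partial\vartheta$. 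Therefore every summand of \eqref{I2-general-case} agrees term by term with the torus computation, so that the computations of Lemmata~\ref{lemma1}--\ref{lemma14} carry over verbatim, now reading $\Delta=-\partial^2/\partial\gamma^2$ on $\s^1$.

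Adding up these lemmata exactly as in the proof of Proposition~\ref{proposizione-I-esplicito-toro-sfera} reproduces
\begin{align*}
I_2(fV_Y)&=\lambda(\lambda+k^2)fV_Y-2k^2 f_{\gamma\gamma}V_Y+2\sqrt2\,k\lambda f_\gamma V_\eta,\\
I_2(fV_\eta)&=(\lambda^2-k^4)fV_\eta-2k^2 f_{\gamma\gamma}V_\eta-2\sqrt2\,k\lambda f_\gamma V_Y.
\end{align*}
The last step is the simplification available only on the circle: since $\Delta f=\lambda f$ with $\Delta=-\partial^2/\partial\gamma^2$, we have $f_{\gamma\gamma}=-\lambda f$. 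Substituting this identity turns $-2k^2 f_{\gamma\gamma}$ into $2k^2\lambda f$, so that $\lambda(\lambda+k^2)$ becomes $\lambda(\lambda+3k^2)$ and $\lambda^2-k^4$ becomes $\lambda^2-k^4+2k^2\lambda$; these are precisely \eqref{prima-espressione-I-s1-sfera} and \eqref{seconda-espressione-I-s1-sfera}.

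I expect the only delicate point to be the reduction itself, i.e.\ verifying cleanly that no term of \eqref{I2-general-case} depends on the domain dimension beyond the $\partial/\partial\vartheta$-contributions that vanish; this is exactly what makes the torus lemmata transferable. It is worth emphasising that the relation $f_{\gamma\gamma}=-\lambda f$, which fails on ${\mathbb T}^2$ (there $\lambda=m^2+n^2$ while $f_{\gamma\gamma}=-m^2 f$), is what produces the cleaner form of the operator in the present one-dimensional setting. As an independent check one could instead rerun the explicit computations of Lemmata~\ref{lemma4}--\ref{lemma14} directly on $\s^1$; this is routine but coefficient-prone, and comparing with the torus case is the natural safeguard against sign errors.
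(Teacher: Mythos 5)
Your proof is correct, and it takes a genuinely different route from the paper: the paper's own proof of Proposition~\ref{proposizione-I-esplicito-s1-sfera} simply reruns the computations of Lemmata~\ref{lemma1}--\ref{lemma14} on $\s^1$ (and omits the details), whereas you deduce the circle formulas as a corollary of Proposition~\ref{proposizione-I-esplicito-toro-sfera} by restricting to $\vartheta$-independent data. The reduction is sound: every term of \eqref{I2-general-case} is built from traces over the domain frame, from $|d\varphi|^2$, $\tau(\varphi)$ (themselves traces) and from $\overline{\Delta}$, and since $d\varphi_k(\partial/\partial\vartheta)=0$ and the lifted sections are $\vartheta$-independent, the $\partial/\partial\vartheta$ slot contributes nothing to any of them; moreover the lift $\tilde f(\gamma,\vartheta)=f(\gamma)$ of an eigenfunction on $\s^1$ with eigenvalue $\lambda=m^2$ is an eigenfunction on ${\mathbb T}^2$ with the same eigenvalue, so Proposition~\ref{proposizione-I-esplicito-toro-sfera} applies verbatim. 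The final substitution $f_{\gamma\gamma}=-\lambda f$ then gives $\lambda(\lambda+k^2)+2k^2\lambda=\lambda(\lambda+3k^2)$ and $(\lambda^2-k^4)+2k^2\lambda$, which are exactly \eqref{prima-espressione-I-s1-sfera} and \eqref{seconda-espressione-I-s1-sfera}. Your approach buys economy (no recomputation) and an explanation of why the circle operator looks ``cleaner'' (the $f_{\gamma\gamma}$ term can be absorbed into $\lambda$ only in one dimension); the paper's direct route is more self-contained and does not rely on the slightly delicate term-by-term inspection of \eqref{I2-general-case}, which you rightly flag as the one point needing care. A reassuring cross-check, which you could cite explicitly, is that the matrices \eqref{matrici-S-m-0} and \eqref{matrici-I2-Sm} appearing later in the paper are identical, exactly as your reduction predicts.
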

The proof is based again on the general formula \eqref{I2-general-case}. The necessary calculations are entirely similar to those of Proposition\link\ref{proposizione-I-esplicito-toro-sfera} and so we omit the details.
Next, we decompose $\mathcal{C}\left( \varphi_k^{-1}T\s^2\right )$  in a similar fashion to \eqref{sottospazi-S-lambda}. We recall that the spectrum of $\Delta$ on $\s^1$ is $\{ m^2 \}_{m\in\n}$ and, for
$m \in \n$, we define
\begin{equation}\label{sottospazi-S-m}
 S^{m^2}=\left \{ f_1\,V_Y\,\,:\,\, \Delta f_1= m^2 f_1 \right \} \oplus \left \{ f_2\,V_\eta\,\,:\,\, \Delta f_2= m^2 f_2 \right \} \,.
\end{equation}
Then we know that $S^{m^2} \perp S^{m'^2}$ if $m \neq m'$, and $\oplus_{m=0}^{+\infty}\, S^{m^2}$ is dense in
$\mathcal{C}\left( \varphi_k^{-1}T\s^2\right )$. Moreover, $I_2$ preserves all these subspaces. Now, we observe that $\dim \left (S^0 \right )=2$ and that an orthonormal basis of $S^0$ is $\left\{u_1,u_2 \right\}$, where 
\[
u_1=  \frac{1}{\sqrt {2\pi}}\,V_Y,\,\quad
u_2= \frac{1}{\sqrt {2\pi}}\,V_\eta\,.
\]
Now, using \eqref{prima-espressione-I-s1-sfera} and \eqref{seconda-espressione-I-s1-sfera}, it is immmediate to construct the $(2 \times 2)$-matrix which describes the restriction of $I_2$ to $S^0$:
\begin{equation}\label{matrix-S-0}
\left (\begin{array}{rr}
0&0 \\
0&-k^4
\end{array}
\right )
\end{equation}
from which we deduce immediately that the contribution of $S^0$ to
the index and the nullity of $\varphi_k$ is $+1$ for both. Next, we study the subspaces $S^{m^2}$, $m \geq1$. First, we observe that $\dim \left (S^{m^2} \right )=4$ and that an orthonormal basis of $S^{m^2}$ is $\left\{u_1,u_2,u_3,u_4 \right\}$, where 
{\small \begin{equation}\label{on-bases}
u_1= \frac{1}{\sqrt {\pi}}\,\cos (m\gamma) \,V_Y,\;
u_2= \frac{1}{\sqrt {\pi}}\,\sin (m\gamma) \,V_Y,\;
u_3= \frac{1}{\sqrt {\pi}}\,\cos (m\gamma) \,V_\eta,\;
u_4= \frac{1}{\sqrt {\pi}}\,\sin (m\gamma) \,V_\eta\,.
\end{equation}
}
Now, using \eqref{prima-espressione-I-s1-sfera} and \eqref{seconda-espressione-I-s1-sfera}, we construct the $(4 \times 4)$-matrices which describe the restriction of $I_2$ to $S^{m^2}$. The outcome is:
\begin{equation}\label{matrici-I2-Sm}
\left(
\begin{array}{cccc}
 m^2 \left(3 k^2+m^2\right) & 0 & 0 & -2 \sqrt{2} k m^3 \\
 0 & m^2 \left(3 k^2+m^2\right) & 2 \sqrt{2} k m^3 & 0 \\
 0 & 2 \sqrt{2} k m^3 & m^4+2 m^2 k^2-k^4 & 0 \\
 -2 \sqrt{2} k m^3 & 0 & 0 & m^4+2 m^2 k^2-k^4
\end{array}
\right) \,,
\end{equation}
whose eigenvalues are
\begin{equation}\label{autovalori-I2-Sm}
\lambda_m^{\pm}=\frac{1}{2} \left(-k^4+2
   m^4+5 k^2 m^2\pm \sqrt{k^8+2 k^6 m^2+k^4 m^4+32 k^2m^6}\right)
\end{equation}
with multiplicity equal to $2$. Now, all the $\lambda_m^+$'s are clearly positive and so they do not contribute neither to the index nor to the nullity of $\varphi_k$. As for the $\lambda_m^-$'s, we can apply Lemma\link\ref{lemma-tecnico1}: it follows that the contribution to the nullity of $\varphi_k$ is $+2$ (coming from $\lambda_{k}^-$), while the contribution to the index is $+2(k-1)$, arising from $1 \leq m \leq  k-1$, so that the proof of Theorem\link\ref{Index-theorem-r-harmonic-circles} is completed.

\subsection{Proof of Theorem\link\ref{th: Sasahara_T2_S5}} The first part of the proof follows the lines of Theorem\link\ref{Spectrum-theorem-toro-sfera} and \cite{BFO}. For the sake of completeness and clarity, we report here the relevant facts. 
Let $ X_1=d\pi(\partial/\partial \gamma)$, $X_2=d\pi(\partial/\partial
\vartheta)$, where $\pi$ denotes the projection from $\mathbb{R}^2$ to
${\mathbb T}^2$. If $ U_1=d\varphi(X_1)$, $U_2=d\varphi(X_2)$, then
$\tau(\varphi)=-\phi(U_1)$. Moreover, the sections $U_1$, $U_2$,
$\phi(U_1)$, $\phi(U_2)$ and $\xi$ parallelize the pull-back
bundle $\varphi^{-1}T\mathbb{S}^5$.
As in \eqref{sottospazi-S-lambda}, we consider
$$
S^\lambda=\{fU_1\}_{f\in W_\lambda}\oplus\{fU_2\}_{f\in W_\lambda}
\oplus\{f\phi(U_1)\}_{f\in W_\lambda}
\oplus\{f\phi(U_2)\}_{f\in W_\lambda} \oplus\{f\xi\}_{f\in
W_\lambda},
$$
where $W_{\lambda}=\{f\in C^{\infty}({\mathbb T}^2): \Delta f=\lambda f\}$.
In this example, our torus is ${\mathbb T}^2=\s^1\times\s^1(1/\sqrt 2)$ and so the eigenvalues of its Laplace operator are of the form $\lambda=m^2+2n^2$, with $m,n \in \n$.
As above, $S^{\lambda_i} \perp S^{\lambda_j}$ if $i \neq j$ and $\oplus_{i=0}^{+\infty}\, S^{\lambda_i}$ is dense in
$\mathcal{C}\left( \varphi^{-1}T\s^5\right )$. 
The following version of Proposition\link\ref{proposizione-I-esplicito-toro-sfera} in this context was obtained in \cite{BFO}:
\begin{proposition}\label{prop-Legendre} Assume that $f \in W_{\lambda}$. Then
\begin{align*}
I_2(fU_1)=&(\lambda^2f-4X_2(X_2(f)))U_1-4X_1(X_2(f))U_2\\
&+4(\lambda+1)X_2(f)\phi(U_2)+(2\lambda f-4X_2(X_2(f)))\xi,\\
\mbox{}
I_2(fU_2)=&-4X_2(X_1(f))U_1+(\lambda^2+6\lambda)fU_2\\
& +4\lambda X_2(f)\phi(U_1)+
4(\lambda+1)X_1(f)\phi(U_2)
\\&-8X_1(X_2(f))\xi,
\end{align*}
\begin{align*}
I_2(f\phi(U_1))=&-4\lambda
X_2(f)U_2\\
&+(\lambda^2+4\lambda-4)f\phi(U_1)-8X_1(X_2(f))\phi(U_2)\\
&-4\lambda X_1(f)\xi,\\
\mbox{}
I_2(f\phi(U_2))=&-4(\lambda+1)X_2(f)U_1-4(\lambda+1)X_1(f)U_2\\
&-8X_1(X_2(f))\phi(U_1)+((\lambda^2+6\lambda)f-4X_2(X_2(f)))\phi(U_2)\\
&-4(\lambda+1)X_2(f)\xi,\\
\mbox{}
I_2(f\xi)=&(2\lambda f-4X_2(X_2(f)))U_1-8X_1(X_2(f))U_2\\
&+4\lambda X_1(f)\phi(U_1)+4(\lambda+1)X_2(f)\phi(U_2)\\
&+(\lambda^2+4\lambda)f\xi.
\end{align*}
\end{proposition}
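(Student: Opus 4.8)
The plan is to specialize the universal formula \eqref{I2-general-case} for $I_2$ to the Sasahara embedding $\varphi$, in exactly the spirit of Proposition~\ref{proposizione-I-esplicito-toro-sfera}: once the covariant derivatives of the five parallelizing sections $U_1,U_2,\phi(U_1),\phi(U_2),\xi$ are known, every term of \eqref{I2-general-case} becomes a finite combination of these sections, with coefficients built from $f$, $X_1(f)$, $X_2(f)$ and their second derivatives. First I would record the first-order data. Since $\varphi$ is an isometric immersion of the flat torus with orthonormal frame $X_1,X_2$, we have $|d\varphi|^2=2$ and, under the identification $U_i=d\varphi(X_i)$, the tension field is $\tau(\varphi)=-\phi(U_1)$, whence $|\tau(\varphi)|^2=1$; moreover $\nabla^{\varphi}_{X_1}\tau(\varphi)=-\xi$ and $\nabla^{\varphi}_{X_2}\tau(\varphi)=-U_2$.

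The key step is the full covariant-derivative table of the frame. For the tangential sections I would use the Gauss formula together with the flatness of ${\mathbb T}^2$ (so $\nabla^{{\mathbb T}^2}_{X_i}X_j=0$), giving $\nabla^{\varphi}_{X_i}U_j=B(X_i,X_j)$; for the normal sections I would use the Sasakian structure equations $\nabla_Z\xi=-\phi Z$ and $(\nabla_Z\phi)W=h(Z,W)\xi-\eta(W)Z$, together with $\phi^2=-\Id+\eta\otimes\xi$, to obtain $\nabla^{\varphi}_{X_i}\phi(U_j)=\delta_{ij}\xi-\sum_k h^k_{ij}U_k$, where $B(X_i,X_j)=\sum_k h^k_{ij}\phi(U_k)$. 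Because $\varphi$ is Legendre, $B$ has no $\xi$-component and $\langle B(X,Y),\phi\cdot\rangle$ is totally symmetric; evaluating the explicit parametrization $\Phi=i\circ\varphi$ in $\R^6$ (differentiating twice and projecting onto the normal bundle), or alternatively imposing biharmonicity, pins down
$$B(X_1,X_1)=0,\qquad B(X_1,X_2)=-\phi(U_2),\qquad B(X_2,X_2)=-\phi(U_1).$$
Substituting these into the two structure equations yields the whole table, for instance $\nabla^{\varphi}_{X_1}U_1=0$, $\nabla^{\varphi}_{X_2}U_1=-\phi(U_2)$, $\nabla^{\varphi}_{X_1}\phi(U_1)=\xi$, $\nabla^{\varphi}_{X_2}\phi(U_2)=U_1+\xi$ and $\nabla^{\varphi}_{X_i}\xi=-\phi(U_i)$. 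I would check every entry against metric compatibility (skew-symmetry of the connection coefficients in the frame) as a built-in consistency test.

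With the table in hand I would compute the rough Laplacian \eqref{roughlaplacian} on each constant frame section, which reduces to $\overline{\Delta}V=-\left(\nabla^{\varphi}_{X_1}\nabla^{\varphi}_{X_1}+\nabla^{\varphi}_{X_2}\nabla^{\varphi}_{X_2}\right)V$ since the Christoffel correction vanishes, obtaining values such as $\overline{\Delta}U_2=2U_2$, $\overline{\Delta}\phi(U_2)=3\phi(U_2)$ and $\overline{\Delta}\xi=U_1+2\xi$. Then, for $f$ a $\Delta$-eigenfunction with eigenvalue $\lambda$, the product rule \eqref{general-product-formula} gives $\overline{\Delta}(fV)$, and applied twice it gives the leading term $\overline{\Delta}^2(fV)$; here one uses that $X_1,X_2$ are Killing, so that $X_i(f)$ is again a $\lambda$-eigenfunction. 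All remaining terms of \eqref{I2-general-case} are of lower order and are assembled from the same table and the derivatives of $\tau(\varphi)$ recorded above. Summing the contributions section by section then produces the five displayed identities of Proposition~\ref{prop-Legendre}.

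I expect the main obstacle to be organizational rather than conceptual: the fourth-order term $\overline{\Delta}^2(fV)$, combined with the fact that the connection mixes the tangential block $\{U_1,U_2\}$ with the normal block $\{\phi(U_1),\phi(U_2),\xi\}$. It is precisely this mixing that generates the off-diagonal first- and second-order coefficients $X_1(X_2(f))$, $X_2(X_2(f))$, $4(\lambda+1)X_2(f)$, $-4\lambda X_1(f)$ and the like that appear in the statement, and keeping the bookkeeping of these cross-terms correct simultaneously across all five sections is where errors are most likely to creep in. This is why the analogous computation in \cite{BFO} was organized through a long chain of intermediate identities; I would follow the same discipline, and as a final safeguard verify that the resulting matrix of $I_2$ on each subspace $S^\lambda$ is self-adjoint.
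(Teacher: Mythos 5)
Your proposal is sound, but be aware that the paper itself contains no proof of Proposition~\ref{prop-Legendre}: the statement is quoted from \cite{BFO}, with the paper only remarking that it is the analogue, in this setting, of Proposition~\ref{proposizione-I-esplicito-toro-sfera}. What you have written is therefore a reconstruction of the missing argument, and it is organized exactly as the paper organizes its own proof in the torus-to-sphere case: parallelize $\varphi^{-1}T\s^5$, build the full connection table, apply the product rule \eqref{general-product-formula}, and feed everything into \eqref{I2-general-case}. All the specific data you assert check out against the explicit parametrization $\Phi$: differentiating twice gives $\Phi_{\gamma\gamma}=-\Phi$, $\Phi_{\gamma\vartheta}=-\phi(U_2)$ and $\Phi_{\vartheta\vartheta}+\Phi=-\phi(U_1)$, i.e. $B(X_1,X_1)=0$, $B(X_1,X_2)=-\phi(U_2)$, $B(X_2,X_2)=-\phi(U_1)$, whence $\tau(\varphi)=-\phi(U_1)$, $|\tau(\varphi)|^2=1$, $|d\varphi|^2=2$; your structure-equation formula $\nabla^{\varphi}_{X_i}\phi(U_j)=\delta_{ij}\xi-\sum_k h^k_{ij}U_k$ and the entries $\nabla^{\varphi}_{X_2}U_1=-\phi(U_2)$, $\nabla^{\varphi}_{X_1}\phi(U_1)=\xi$, $\nabla^{\varphi}_{X_2}\phi(U_2)=U_1+\xi$, $\nabla^{\varphi}_{X_i}\xi=-\phi(U_i)$ are all correct, and they yield $\overline{\Delta}U_1=U_1+\xi$, $\overline{\Delta}U_2=2U_2$, $\overline{\Delta}\phi(U_1)=2\phi(U_1)$, $\overline{\Delta}\phi(U_2)=3\phi(U_2)$, $\overline{\Delta}\xi=U_1+2\xi$, confirming the three values you quote; likewise $\nabla^{\varphi}_{X_1}\tau(\varphi)=-\xi$ and $\nabla^{\varphi}_{X_2}\tau(\varphi)=-U_2$ are right. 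A further consistency check: at $\lambda=0$ the claimed formulas give $I_2(\phi(U_1))=-4\phi(U_1)$ and $I_2=0$ on the other four constant sections, exactly the contribution of $1$ to the index and $4$ to the nullity from $S^{\lambda_0}$ used later in the proof of Theorem~\ref{th: Sasahara_T2_S5}. The only caveat is that your text is a plan rather than a finished computation: the thirteen terms of \eqref{I2-general-case} must still be assembled for each of the five sections (the analogue of Lemmata~\ref{lemma1}--\ref{lemma14}), but with the verified table this is routine bookkeeping of precisely the kind the paper carries out in the torus-to-sphere case, and your proposed safeguard of checking self-adjointness of the resulting matrices on each $S^\lambda$ is an appropriate final control.
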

As for $\lambda_0=0$, we have:
\begin{equation}\label{sottospazi-S-lambda-0-Legendre}
 S^{\lambda_0}=\left \{ c_1\,U_1 \right \} \oplus \left \{ c_2\,U_2 \right \}\oplus \left \{ c_3\,\phi(U_1) \right \}\oplus \left \{ c_4\,\phi(U_2) \right \}\oplus \left \{ c_5\,\xi \right \}\,,
 \end{equation}
where $c_i \in \R$, $\,i=1, \ldots, 5$, so that $\dim \left( S^{\lambda_0} \right)=5$.
Next, let us consider the case that $\lambda=m^2+2n^2 >0$. We decompose
\begin{equation}\label{sottospazi-W-lambda-Legendre}
W_{\lambda}=W^{m,0}\oplus_{m,n \geq 1}W^{m,n}\oplus W^{0,n}\,,
 \end{equation}
where it is understood that in \eqref{sottospazi-W-lambda-Legendre} we have to consider all the possible couples $(m,n)\in \n \times \n$ such that $\lambda=m^2+2n^2$. Now, the subspaces of the type $W^{m,0}$ are $2$-dimensional and are spanned by the functions $\left \{\cos (m\gamma),\sin (m\gamma) \right \}$. Similarly, $W^{0,n}$ is $2$-dimensional and is generated by $\left \{\cos (\sqrt 2 n\vartheta),\sin (\sqrt 2 n\vartheta) \right \}$. Finally, the subspaces $W^{m,n}$, with $m,n \geq 1$, have dimension $4$ and are spanned by $\left \{g_1,g_2,g_3,g_4 \right \}$, where
\begin{eqnarray}\label{definiz-g1-2-3-4}
&g_1=\cos (m\gamma)\cos(\sqrt 2 n\vartheta),\,\quad g_2=\cos (m\gamma)\sin (\sqrt 2 n\vartheta),\\\nonumber
&g_3=\sin (m\gamma)\cos(\sqrt 2 n\vartheta),\,\quad g_4=\sin (m\gamma)\sin (\sqrt 2 n\vartheta) \,.\nonumber
\end{eqnarray}
Now it becomes natural to define
\begin{equation}\label{sottospazi-S-m,n-Legendre}
 S^{m,n}=\left \{ f_1\,U_1\right \} \oplus \left \{ f_2\,U_2\right \} \oplus \left \{ f_3\,\phi(U_1)\right \}\oplus \left \{ f_4\,\phi(U_2)\right \}\oplus \left \{ f_5\,\xi\right \}\,,
 \end{equation}
where $f_i \in W^{m,n}$, $i=1, \ldots, 5$.
All these subspaces are orthogonal to each other. Moreover, for any positive eigenvalue $\lambda_i$, we have
\begin{equation}\label{decomposizioneS-lambda-in-coppie-m-n}
 S^{\lambda_i}= \oplus_{m^2+2n^2=\lambda_i}\,\,S^{m,n}\,.
 \end{equation}
Since $X_1$ and $X_2$ are Killing vector fields on ${\mathbb T}^2$ it follows easily from Proposition\link\ref{prop-Legendre} that the operator $I_2$ preserves each of the subspaces $S^{m,n}$. Therefore, its spectrum can be computed by determing the eigenvalues of the matrices associated to the restriction of $I_2$ to each of the $S^{m,n}\,$'s. The contribution to the index and the nullity of $\varphi$ arising from the subspaces $S^{\lambda_0}$, $S^{m,0}$, $S^{0,n}$, $S^{1,1}$ and $S^{2,1}$ has already been calculated in \cite {BFO}: it is $1+6+0+4+0=11$ for the index and $4+2+8+0+4=18$ for the nullity. By way of summary, in order to complete the proof of the theorem, we just have to study the subspaces $S^{m,n}$ in the remaining cases and show that they do not contribute neither to the index nor to the nullity of $\varphi$. To this purpose, we first observe that $\dim (S^{m,n})=20$ for all $m,n \in \n^*$ and an orthonormal basis for these subspaces is:
\begin{equation}\label{base-ortonormale-S-m-n}
\left \{c^*g_1U_1,c^*g_2 U_1,c^*g_3U_1,c^*g_4U_1,c^*g_1U_2,\ldots,c^*g_1\phi(U_1),\ldots, c^*g_1\phi(U_2),\ldots,c^*g_1\xi,\ldots\right \},
\end{equation}
where $g_1,g_2,g_3,g_4$ are the functions introduced in \eqref{definiz-g1-2-3-4} and $c^*=\sqrt[4]{2}/\pi$.
Next, using Proposition\link\ref{prop-Legendre}, a long but straightforward calculation leads us to the expression of the $20 \times 20$-matrices which describe the restriction of $I_2$ to the $S^{m,n}$'s with respect to the orthonormal bases \eqref{base-ortonormale-S-m-n}. The result is ($\lambda=m^2+2n^2$):
\begin{tiny}
\begin{equation*}
\setlength\arraycolsep{1.pt}
\left (
\begin{array}{cccccccccc}
 8 n^2+\lambda ^2 & 0 & 0 & 0 & 0 & 0 & 0 & -4 \sqrt{2} m n & 0 & 0   \cr
 0 & 8 n^2+\lambda ^2 & 0 & 0 & 0 & 0 & 4 \sqrt{2} m n & 0 & 0 & 0  \cr
 0 & 0 & 8 n^2+\lambda ^2 & 0 & 0 & 4 \sqrt{2} m n & 0 & 0 & 0 & 0  \cr
 0 & 0 & 0 & 8 n^2+\lambda ^2 & -4 \sqrt{2} m n & 0 & 0 & 0 & 0 & 0  \cr
 0 & 0 & 0 & -4 \sqrt{2} m n & \lambda  (\lambda +6) & 0 & 0 & 0 & 0 & -4 \sqrt{2} n \cr
 0 & 0 & 4 \sqrt{2} m n & 0 & 0 & \lambda  (\lambda +6) & 0 & 0 & 4 \sqrt{2} n \lambda  &0 \cr
 0 & 4 \sqrt{2} m n & 0 & 0 & 0 & 0 & \lambda  (\lambda +6) & 0 & 0 & 0 \cr
 -4 \sqrt{2} m n & 0 & 0 & 0 & 0 & 0 & 0 & \lambda  (\lambda +6) & 0 & 0   \cr
 0 & 0 & 0 & 0 & 0 & 4 \sqrt{2} n \lambda  & 0 & 0 & \lambda ^2+4 \lambda -4 & 0  \cr
 0 & 0 & 0 & 0 & -4 \sqrt{2} n \lambda  & 0 & 0 & 0 & 0 & \lambda ^2+4 \lambda -4   \cr
 0 & 0 & 0 & 0 & 0 & 0 & 0 & 4 \sqrt{2} n \lambda  & 0 & 0 \cr
 0 & 0 & 0 & 0 & 0 & 0 & -4 \sqrt{2} n \lambda  & 0 & 0 & 0  \cr
 0 & 4 \sqrt{2} n (\lambda +1) & 0 & 0 & 0 & 0 & 4 m (\lambda +1) & 0 & 0 & 0  \cr
 -4 \sqrt{2} n (\lambda +1) & 0 & 0 & 0 & 0 & 0 & 0 & 4 m (\lambda +1) & 0 & 0 \cr
 0 & 0 & 0 & 4 \sqrt{2} n (\lambda +1) & \;\;\;-4 m (\lambda +1) & 0 & 0 & 0 & 0 & 8 \sqrt{2} m
   n  \cr
 0 & 0 & -4 \sqrt{2} n (\lambda +1) & 0 & 0 & -4 m (\lambda +1) & 0 & 0 & -8 \sqrt{2} m n
   & 0  \cr
 2 \left(4 n^2+\lambda \right) & 0 & 0 & 0 & 0 & 0 & 0 & -8 \sqrt{2} m n & 0 & 0  \cr
 0 & 2 \left(4 n^2+\lambda \right) & 0 & 0 & 0 & 0 & 8 \sqrt{2} m n & 0 & 0 & 0  \cr
 0 & 0 & 2 \left(4 n^2+\lambda \right) & 0 & 0 & 8 \sqrt{2} m n & 0 & 0 & 4 m \lambda  & 0\cr
 0 & 0 & 0 & 2 \left(4 n^2+\lambda \right) & -8 \sqrt{2} m n & 0 & 0 & 0 & 0 & 4 m \lambda
  \end{array}
  \right.
  \end{equation*}

\begin{equation*}
\setlength\arraycolsep{-1pt}
\left .
\hspace{-5mm}\begin{array}{cccccccccc}
  0 & 0 & 0 & -4
   \sqrt{2} n (\lambda +1) & 0 & 0 & 2 \left(4 n^2+\lambda \right) & 0 & 0 & 0\cr
  0 & 0 & 4 \sqrt{2} n
   (\lambda +1) & 0 & 0 & 0 & 0 & 2 \left(4 n^2+\lambda \right) & 0 & 0 \cr
 0 & 0 & 0 & 0 & 0 &
   -4 \sqrt{2} n (\lambda +1) & 0 & 0 & 2 \left(4 n^2+\lambda \right) & 0 \cr
  0 & 0 & 0 & 0 & 4
   \sqrt{2} n (\lambda +1) & 0 & 0 & 0 & 0 & 2 \left(4 n^2+\lambda \right) \cr
 0 & 0 & 0 & 0 & -4 m (\lambda +1) & 0 & 0 & 0 & 0 & -8 \sqrt{2} m n \cr
  0 & 0 & 0 & 0 & 0 & -4 m (\lambda +1) & 0 & 0 & 8 \sqrt{2} m n & 0 \cr
 0 & -4 \sqrt{2}
   n \lambda  & 4 m (\lambda +1) & 0 & 0 & 0 & 0 & 8 \sqrt{2} m n & 0 & 0 \cr
  4 \sqrt{2} n
   \lambda  & 0 & 0 & 4 m (\lambda +1) & 0 & 0 & -8 \sqrt{2} m n & 0 & 0 & 0 \cr
  0 & 0 &
   0 & 0 & 0 & -8 \sqrt{2} m n & 0 & 0 & 4 m \lambda  & 0 \cr
  0 & 0
   & 0 & 0 & 8 \sqrt{2} m n & 0 & 0 & 0 & 0 & 4 m \lambda  \cr
 \lambda ^2+4 \lambda -4 & 0 &
   0 & 8 \sqrt{2} m n & 0 & 0 & -4 m \lambda  & 0 & 0 & 0 \cr
  0 & \lambda ^2+4 \lambda -4
   & -8 \sqrt{2} m n & 0 & 0 & 0 & 0 & -4 m \lambda  & 0 & 0 \cr
  0 & -8
   \sqrt{2} m n & \;\;\; 8 n^2+\lambda  (\lambda +6) & 0 & 0 & 0 & 0 & 4 \sqrt{2} n (\lambda +1)
   & 0 & 0 \cr
 8
   \sqrt{2} m n & 0 & 0 & 8 n^2+\lambda  (\lambda +6) & 0 & 0 & -4 \sqrt{2} n (\lambda +1)
   & 0 & 0 & 0\cr
  0 & 0 & 0 & 0 & 8 n^2+\lambda  (\lambda +6) & 0 & 0 & 0 & 0 & 4 \sqrt{2} n (\lambda
   +1) \cr
  0 & 0 & 0 & 0 & 0 & 8 n^2+\lambda  (\lambda +6) & 0 & 0 & -4 \sqrt{2} n (\lambda
   +1) & 0 \cr
  -4 m
   \lambda  & 0 & 0 & -4 \sqrt{2} n (\lambda +1) & 0 & 0 & \lambda  (\lambda +4) & 0 & 0 &
   0 \cr
  0 & -4 m
   \lambda  & 4 \sqrt{2} n (\lambda +1) & 0 & 0 & 0 & 0 & \lambda  (\lambda +4) & 0 & 0 \cr
 0 & 0 & 0 & 0 & 0 & -4 \sqrt{2} n (\lambda +1) & 0 & 0 & \lambda  (\lambda +4) & 0 \cr
  0 & 0 & 0 & 0 & 4 \sqrt{2} n (\lambda +1) & 0 & 0 & 0 & 0 & \lambda  (\lambda +4) 
  \end{array}
  \right)
  \end{equation*}
\end{tiny}
By means of a suitable software we find that their characteristic polynomial is:
\begin{equation}\label{pol-char-matrici20x20}
P(x)= \left[ P_5(x) \right ]^4 \,,
\end{equation}
where
\begin{equation}\label{pol-char-grado-5}
P_5(x)= a_5x^5+a_4x^4+a_3x^3+a_2x^2+a_1x+a_0
\end{equation}
with coefficients given by:
\begin{eqnarray}\label{coeff-pol-grado-5}
\\\nonumber a_5(m,n)&=&-1 \,\,;\\ \nonumber
a_4(m,n)&=&5 m^4+20 m^2 n^2+20 m^2+20 n^4+56 n^2-4\,\,; \\ \nonumber
a_3(m,n)&=& -10 m^8-80 m^6 n^2-48 m^6-240 m^4 n^4-320 m^4 n^2-96 m^4-320 m^2 n^6\\ \nonumber &&-704 m^2 n^4-272 m^2
   n^2+80 m^2-160 n^8-512 n^6-736 n^4+256 n^2\,\,;\\ \nonumber
a_2(m,n)&=&10 m^{12}+120 m^{10} n^2+24 m^{10}+600 m^8 n^4+240 m^8 n^2-8 m^8+1600 m^6 n^6\\ \nonumber &&+960 m^6
   n^4+1200 m^6 n^2-16 m^6+2400 m^4 n^8+1920 m^4 n^6+5024 m^4 n^4\\ \nonumber &&-320 m^4 n^2-320 m^4+1920
   m^2 n^{10}+1920 m^2 n^8+5440 m^2 n^6-2368 m^2 n^4\\ \nonumber &&-832 m^2 n^2+64 m^2+640 n^{12}+768
   n^{10}+512 n^8+512 n^6-2688 n^4+256 n^2 \,\,;\\ \nonumber
a_1(m,n)&=& -5 m^{16}-80 m^{14} n^2+16 m^{14}-560 m^{12} n^4+256 m^{12} n^2+64 m^{12}-2240 m^{10}n^6\\ \nonumber &&
   +1728 m^{10} n^4-560 m^{10} n^2+48 m^{10}-5600 m^8 n^8+6400 m^8 n^6-7072 m^8
   n^4\\ \nonumber &&+1536 m^8 n^2+272 m^8-8960 m^6 n^{10}+14080 m^6 n^8-23936 m^6 n^6+6272 m^6 n^4\\ \nonumber &&+6080
   m^6 n^2-64 m^6-8960 m^4 n^{12}+18432 m^4 n^{10}-34048 m^4 n^8+4608 m^4 n^6\\ \nonumber &&+12800 m^4
   n^4-256 m^4-5120 m^2 n^{14}+13312 m^2 n^{12}-18176 m^2 n^{10}\\ \nonumber &&-11520 m^2 n^8+45312 m^2
   n^6-5888 m^2 n^4+512 m^2 n^2-1280 n^{16}+4096 n^{14}\\ \nonumber &&-512 n^{12}-14336 n^{10}+18176
   n^8-4096 n^6-2048 n^4\,\,;\\ \nonumber
 \end{eqnarray}  
 \begin{eqnarray}\label{coeff-pol-grado-5-bis}
 &&\\
\nonumber a_0(m,n)&=&m^{20}+20 m^{18} n^2-12 m^{18}+180 m^{16} n^4-232 m^{16} n^2+44 m^{16}+960 m^{14} n^6\\ \nonumber &&-1984
   m^{14} n^4+656 m^{14} n^2-112 m^{14}+3360 m^{12} n^8-9856 m^{12} n^6+4832 m^{12}
   n^4\\ \nonumber &&+576 m^{12} n^2+304 m^{12}+8064 m^{10} n^{10}-31360 m^{10} n^8+22592 m^{10}
   n^6\\ \nonumber &&+11456 m^{10} n^4-5056 m^{10} n^2-320 m^{10}+13440 m^8 n^{12}-66304 m^8 n^{10}\\ \nonumber &&+70400
   m^8 n^8+44544 m^8 n^6-41152 m^8 n^4-1920 m^8 n^2+576 m^8+15360 m^6 n^{14}\\ \nonumber &&-93184 m^6
   n^{12}+144128 m^6 n^{10}+52992 m^6 n^8-116224 m^6 n^6-21504 m^6 n^4\\ \nonumber &&+3328 m^6 n^2-256
   m^6+11520 m^4 n^{16}-83968 m^4 n^{14}+184832 m^4 n^{12}\\ \nonumber &&-48128 m^4 n^{10}-121600 m^4
   n^8-22528 m^4 n^6+11264 m^4 n^4+1024 m^4 n^2\\ \nonumber &&+5120 m^2 n^{18}-44032 m^2 n^{16}+134144
   m^2 n^{14}-158720 m^2 n^{12}+54272 m^2 n^{10}\\ \nonumber &&-31744 m^2 n^8+44032 m^2 n^6-3072 m^2
   n^4+1024 n^{20}-10240 n^{18}+41984 n^{16}\\ \nonumber &&-98304 n^{14}+142336 n^{12}-124928
   n^{10}+60416 n^8-12288 n^6 \,\,. \nonumber
\end{eqnarray}
Now, the end of the proof of the theorem is an immediate consequence of the following technical lemma. To state the lemma, it is convenient to use the following notation:
\begin{definition} Let $m,n,m',n' \in \n$. We shall write $[m,n] \leq [m',n']$ if $m \leq m'$ and $n \leq n'$. We shall write $[m,n] < [m',n']$ if $[m,n] \leq [m',n']$ and either $m < m'$ or $n < n'$. 
\end{definition}
\begin{lemma}\label{lemma-pol-grado-5} Let $P_5(x)$ be the polynomial of degree $5$ defined in \eqref{pol-char-grado-5}--\eqref{coeff-pol-grado-5-bis}. Assume that 
\begin{equation}\label{uffa-uffa}
[2,1]<[m,n] \quad {\rm or} \quad [1,2]\leq [m,n]\,. 
\end{equation}
Then $P_5(x)$ does not admit any nonpositive root.
\end{lemma}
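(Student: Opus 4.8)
The plan is to show directly that $P_5(x)>0$ for every $x\le 0$; since the roots of $P_5$ are, by \eqref{pol-char-matrici20x20}, exactly the eigenvalues of $I_2$ restricted to $S^{m,n}$ (each counted four times) and $I_2$ is self-adjoint, these roots are automatically real, so proving $P_5(x)>0$ on $(-\infty,0]$ is equivalent to the assertion of the lemma. First I would substitute $x=-t$ with $t\ge 0$. Recalling from \eqref{coeff-pol-grado-5} that $a_5=-1$, a direct computation gives
\[
P_5(-t)=t^5+a_4\,t^4-a_3\,t^3+a_2\,t^2-a_1\,t+a_0\,.
\]
Hence it suffices to prove the sign pattern
\[
a_4\ge 0,\qquad a_3\le 0,\qquad a_2\ge 0,\qquad a_1\le 0,\qquad a_0>0
\]
on the range prescribed by \eqref{uffa-uffa}: indeed, for $t\ge 0$ each monomial of $P_5(-t)$ is then nonnegative while the constant term $a_0$ is strictly positive, so $P_5(-t)>0$. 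Equivalently, one may invoke Descartes' rule of signs applied to $P_5(-x)$, whose coefficient string $1,a_4,-a_3,a_2,-a_1,a_0$ then exhibits no sign change.

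Next I would record that the hypothesis \eqref{uffa-uffa} is nothing but the condition $m,n\ge 1$ together with $(m,n)\notin\{(1,1),(2,1)\}$, the two excluded pairs being precisely those already treated in \cite{BFO}. It is then convenient to split the verification into the two cases (A) $n\ge 2$ with $m\ge 1$ arbitrary, and (B) $n=1$ with $m\ge 3$, which together exhaust \eqref{uffa-uffa}.

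With this in place, the two easy inequalities are immediate. For $a_4$ one has $a_4\ge 20m^2-4>0$ as soon as $m\ge 1$. For $a_3$, every monomial in \eqref{coeff-pol-grado-5} is negative except $80m^2$ and $256n^2$, and for $n\ge 1$ these are absorbed by $-272m^2n^2$ and $-736n^4$ respectively, since $80m^2-272m^2n^2\le 0$ and $256n^2-736n^4\le 0$, whence $a_3\le 0$. For the three remaining inequalities $a_2\ge 0$, $a_1\le 0$ and $a_0>0$ I would set $p=m^2\ge 1$, $q=n^2\ge 1$ and regard each coefficient as a polynomial in $(p,q)$: the top-degree part has the desired sign (positive for $a_2$ and $a_0$, negative for $a_1$), and one bounds each lower-order term of the opposite sign by a fixed fraction of a dominant same-sign term, carrying out the estimate separately in cases (A) and (B).

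The hard part is exactly this last verification of $a_2\ge 0$, $a_1\le 0$ and $a_0>0$: these are polynomials of degree up to $20$ in $(m,n)$ with many terms of both signs, and the inequalities become tight near the excluded pairs $(1,1)$ and $(2,1)$, which is the structural reason those cases are removed in \eqref{uffa-uffa}. The estimates are elementary but lengthy, and are most safely confirmed with computer algebra, as elsewhere in the paper; the only genuinely delicate point is to choose groupings of terms that remain valid uniformly over the whole region \eqref{uffa-uffa}, rather than merely for $(m,n)$ large.
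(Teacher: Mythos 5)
Your overall strategy is exactly the paper's: reduce the lemma, via Descartes' rule (equivalently, the positivity of $P_5(-t)=t^5+a_4t^4-a_3t^3+a_2t^2-a_1t+a_0$ for $t\ge 0$), to the sign pattern $a_4\ge 0$, $a_3\le 0$, $a_2\ge 0$, $a_1\le 0$, $a_0>0$ on the region \eqref{uffa-uffa}. Your identification of \eqref{uffa-uffa} with $\{m,n\ge 1\}\setminus\{(1,1),(2,1)\}$ and the split into the cases $n\ge 2$ and $n=1,\ m\ge 3$ are correct (and in fact cleaner than the paper's own split into $[m,n]\le[3,3]$ versus $[3,3]<[m,n]$, which as literally written omits pairs such as $(5,1)$). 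Your verifications of $a_4\ge 0$ and $a_3\le 0$ are correct and more explicit than the paper's, which dismisses them as obvious.

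However, the proposal stops precisely where the content of the lemma lies: you do not actually prove $a_2\ge 0$, $a_1\le 0$ and $a_0>0$, but only describe a plan (pass to $p=m^2$, $q=n^2$, absorb each wrong-sign monomial into a fraction of a dominant same-sign one) and then defer to computer algebra. Since these three inequalities are the entire substance of the statement --- and, as you yourself note, they are delicate near the excluded pairs $(1,1)$ and $(2,1)$ --- the argument as written is not a proof. The paper closes this gap concretely: it first checks the finitely many pairs with $[m,n]\le[3,3]$ directly, and then exhibits explicit regroupings of $a_2$, $a_1$ and $a_0$ into parenthesized two- or three-term blocks, each of which has the required sign on the remaining range; once those groupings are written down, the verification is a routine term-by-term inspection. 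To complete your proof you would need to supply such groupings (or equivalent explicit bounds) valid on the whole of your cases (A) and (B), not merely assert that they exist.
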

\begin{proof}[Proof of the lemma]
By the classical criterion of Descartes, it suffices to show that, if \eqref{uffa-uffa} holds, we have
\begin{equation}\label{criterio-cartesio}
{\rm (i)}\,\,a_5 < 0 \,; \quad {\rm (ii)}\,\,a_4 \geq 0 \,;\quad {\rm (iii)}\,\, a_3 \leq 0\,; \quad{\rm (iv)}\,\, a_2 \geq 0\,; \quad{\rm (v)}\,\,a_1 \leq 0\,; \quad{\rm (vi)}\,\, a_0 > 0\,.
\end{equation}
The claims \eqref{criterio-cartesio}(i),(ii) and (iii) are obvious. As for \eqref{criterio-cartesio}(iv), we first observe, by direct computation, that $a_2 \geq0$ when \eqref{uffa-uffa} holds and $[m,n] \leq [3,3]$. Then we rewrite the coefficient $a_2$ as follows:
\begin{eqnarray*}\label{riscrittura-a2}
a_2&=& 256 n^2+(-2688 n^4+512 n^6)+512 n^8+768 n^{10}+640 n^{12}+64 m^2\\ &&+(-2368 n^4 m^2+5440 n^6 m^2)+
1920 n^8 m^2+1920 n^{10} m^2\\ &&+(-320-320 n^2 +5024 n^4)m^4+1920 n^6 m^4+2400 n^8 m^4\\ &&+(-16 m^6+1200 n^2 m^6)+960 n^4 m^6+
1600 n^6 m^6\\ &&+(-832 n^2 m^2+240 n^2 m^8)+600 n^4 m^8+24 m^{10}+120 n^2 m^{10}+(-8 m^8+10 m^{12})
\end{eqnarray*}
Now it is easy to check that all the terms within parentheses are positive when $[3,3]<[m,n]$ and so \eqref{criterio-cartesio}(iv) is proved.
As for \eqref{criterio-cartesio}(v), we first observe, by direct computation, that $a_1 \leq 0$ when \eqref{uffa-uffa} holds and $[m,n] \leq [3,3]$. Then we rewrite the coefficient $a_1$ as follows:
\begin{eqnarray*}\label{riscrittura-a1}
a_1&=& -2048 n^4-4096 n^6+(18176 n^8-14336 n^{10})-512 n^{12}\\ &&+(4096 n^{14}-1280 n^{16})+(512 n^2 m^2-5888 n^4 m^2)\\ &&+
(45312 n^6 m^2-11520 n^8 m^2)-18176 n^{10 }m^2+(13312 n^{12} m^2-5120 n^{14} m^2)\\ &&-256 m^4+(12800 +4608 n^2-
34048 n^4)n^4 m^4+(18432 n^{10} m^4-8960 n^{12} m^4)\\ &&+(-64 +6080 n^2+6272 n^4-23936 n^6 )m^6\\ &&+(14080 n^8-8960 n^{10} )m^6+
(272 +1536 n^2 -7072 n^4)m^8\\ &&+(6400 n^6 m^8-5600 n^8 m^8)+(48 m^{10}-560 n^2 m^{10})\\ &&+(1728 n^4 m^{10}-2240 n^6 m^{10})+(64 m^{12}+
256 n^2 m^{12}-560 n^4 m^{12})\\ &&+(16 m^{14}-80 n^2 m^{14})-5 m^{16}
\end{eqnarray*}
Now it is not difficult to check that all the terms within parentheses are negative when $[3,3]<[m,n]$ and so \eqref{criterio-cartesio}(v) is proved.
As for \eqref{criterio-cartesio}(vi), we first observe, by direct computation, that $a_0 > 0$ when \eqref{uffa-uffa} holds and $[m,n] \leq [3,3]$. Then we rewrite the coefficient $a_0$ as follows:
\begin{eqnarray*}\label{riscrittura-a0}
a_0&=& (-12288 n^6+60416 n^8)+(-124928 n^{10}+142336 n^{12})\\ &&+(-98304 n^{14}+41984 n^{16})+(-10240 n^{18}+1024 n^{20})+
(-3072 n^4 m^2+44032 n^6 m^2)\\ &&+(-31744 n^8 m^2+54272 n^{10} m^2)+(-158720 n^{12} m^2+134144 n^{14} m^2)\\ &&+
(-44032 n^{16} m^2+5120 n^{18} m^2)+1024 n^2 m^4+11264 n^4 m^4\\ &&+(-48128 n^{10} m^4+184832 n^{12} m^4)+
(-83968 n^{14} m^4+11520 n^{16} m^4)\\ &&+
(-256 m^6+3328 n^2 m^6)+(-121600 n^8 m^4+52992 n^8 m^6)+144128 n^{10} m^6\\ &&+
(-93184 n^{12} m^6+15360 n^{14} m^6)+
576 m^8+(-41152 n^4 m^8+44544 n^6 m^8)\\ &&+(-116224 n^6 m^6+70400 n^8 m^8)+(-66304 n^{10} m^8+13440 n^{12} m^8)\\ &&+(-5056 n^2 m^{10}+
11456 n^4 m^{10})+(-22528 n^6 m^4+22592 n^6 m^{10})\\ &&+(-31360 n^8 m^{10}+8064 n^{10} m^{10})+304 m^{12}+
(-1920 n^2 m^8+576 n^2 m^{12})\\ &&+(-21504 n^4 m^6+4832 n^4 m^{12})+(-9856 n^6 m^{12}+3360 n^8 m^{12})\\ &&+
(-112 m^{14}+656 n^2 m^{14})+(-1984 n^4 m^{14}+960 n^6 m^{14})\\ &&+44 m^{16}+
(-232 n^2 m^{16}+180 n^4 m^{16} )+(-12 m^{18}+20 n^2 m^{18})+(m^{20} -320 m^{10})
\end{eqnarray*}
Now it is easy to check that all the terms within parentheses are positive when $[3,3]<[m,n]$ and so \eqref{criterio-cartesio}(vi) is proved and the proof of the lemma is ended.
\end{proof}
\begin{remark} It is easy to check, by direct inspection, that actually the inequalities \eqref{criterio-cartesio}(i)--(v) are true if \eqref{uffa-uffa} is replaced by the less restrictive condition $[1,1] \leq [m,n]$. By contrast, $a_0(1,1)<0$ and $a_0(2,1)=0$. Putting all these facts together we recover the result of \cite{BFO} according to which $S^{1,1}$ produces a contribution $4$ for the index and $0$ for the nullity, while the contribution of $S^{2,1}$ is $4$ for the nullity and $0$ for the index.
\end{remark}
\subsection{Proof of Theorem\link\ref{th:noncompact}} The first step of the proof is to compute the explicit expression of $I_2$ using \eqref{I2-general-case}. The image of $\varphi$ is contained in the equator of $\s^2$, which we denote by $\s^1$. Next, we define $Y$ and $\eta$ as follows:
\[
Y\left ( y_1,y_2,0\right )=\left ( -y_2,y_1,0\right )\quad {\rm and} \quad \eta \left ( y_1,y_2,0\right )=( 0,0,1)\,.
\]
We shall write $V_Y,\,V_\eta$ for $Y\circ \varphi,\,\eta\circ \varphi$ respectively. Clearly, $\left \{V_Y(\gamma),\,V_\eta(\gamma) \right \}$ is an orthonormal basis of $T_{\varphi(\gamma)}\s^2$ for all $\gamma \in \R$. Therefore, any section of $\varphi^{-1}T\s^2$ can be written as 
\[
V=f_1 \,V_Y+f_2 \,V_ \eta \,,
\]
where $f_1,\,f_2 \in C^{\infty}(\R)$.  Moreover,
$$
\tau(\varphi)=A''  \,V_Y\,, \quad \tau_2(\varphi)=A^{(4)}  \,V_Y\,.
$$
Next, performing computations similar to those of Proposition\link\ref{proposizione-I-esplicito-toro-sfera}, we compute the various terms of \eqref{I2-general-case}. The results are summarised in the following two lemmata.

\renewcommand{\arraystretch}{1.3}
\begin{lemma}\label{lemma13-noncompact}
\[\begin{array}{rcl}
\overline{\Delta}^2\left ( fV_Y\right ) &=& f^{(4)}V_Y\\ \nonumber
\overline{\Delta}\left ( {\rm trace}\langle fV_Y,d\varphi \cdot \rangle d\varphi \cdot- |d\varphi|^2\,fV_Y \right) &=& 0\\ \nonumber
2\langle d\tau(\varphi),d\varphi \rangle fV_Y &=& 2\, A'''\,A'\,f\,V_Y\\ \nonumber
|\tau(\varphi)|^2 fV_Y &=&(A'')^2\, f\,V_Y\\ \nonumber
-2\,{\rm trace}\langle fV_Y,d\tau(\varphi) \cdot \rangle d\varphi \cdot &=& -2\, A'''\,A'\,f\,V_Y\\ \nonumber
-2 \,{\rm trace} \langle \tau(\varphi),d(fV_Y) \cdot \rangle d\varphi \cdot &=&- 2\,
A''\,A'\,f' \,V_Y\\ \nonumber
- \langle \tau(\varphi),fV_Y \rangle \tau(\varphi)&=&-(A'')^2\, f\,V_Y \\ \nonumber
{\rm trace} \langle d\varphi \cdot,\overline{\Delta}(fV_Y) \rangle d\varphi \cdot &=&-(A')^2\, f''\,V_Y\\ \nonumber
{\rm trace}\langle d\varphi \cdot,\left ( {\rm trace}\langle
fV_Y,d\varphi \cdot \rangle d\varphi \cdot \right ) \rangle d\varphi \cdot&=& (A')^4\, f\,V_Y\\ \nonumber
-2 |d\varphi|^2\, {\rm trace}\langle d\varphi \cdot,fV_Y \rangle
 d\varphi \cdot&=&-2 (A')^4\, f\,V_Y \\ \nonumber
 2 \langle d(fV_Y),d\varphi\rangle \tau(\varphi) &=&
2 A''\,A'\,f'\,V_Y\\ \nonumber
-|d\varphi|^2 \,\overline{\Delta}(fV_Y)&=& (A')^2\,f''\,V_Y\\ \nonumber
|d\varphi|^4 fV_Y&=& (A')^4\,f\,V_Y\nonumber
\end{array}
\]
\end{lemma}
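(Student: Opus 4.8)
The plan is to reduce all thirteen identities to three elementary building blocks and then substitute. Since the domain is $\R$ with its standard flat metric, I would fix the global unit frame $e_1=\partial/\partial\gamma$, which is parallel ($\nabla^M_{e_1}e_1=0$), so that the rough Laplacian \eqref{roughlaplacian} collapses to $\overline{\Delta}=-\nabla^\varphi_{e_1}\nabla^\varphi_{e_1}$ and every trace over an orthonormal frame appearing in \eqref{I2-general-case} (the dot notation) becomes mere evaluation at $e_1$. First I would record the basic differential data: differentiating $\varphi(\gamma)=(\cos A,\sin A,0)$ gives $d\varphi(e_1)=A'(-\sin A,\cos A,0)=A'\,V_Y$, whence $|d\varphi|^2=(A')^2$.

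The heart of the matter is that both $V_Y$ and $V_\eta$ are \emph{parallel} sections of $\varphi^{-1}T\s^2$. Geometrically this holds because the image of $\varphi$ lies in the equator $\s^1$, which is a great circle and hence totally geodesic in $\s^2$; this is precisely the feature that distinguishes the present case from the small-circle computation of Proposition~\ref{proposizione-I-esplicito-toro-sfera}, where $B(Y,Y)=-\eta\neq 0$ in \eqref{IIfund-form} produced nontrivial cross terms. Concretely, I would differentiate in $\R^3$: $\tfrac{d}{d\gamma}V_Y=-A'(\cos A,\sin A,0)$ is a multiple of the position vector $\varphi(\gamma)$, i.e. of the outward unit normal to $\s^2$, so its tangential projection vanishes and $\nabla^\varphi_{e_1}V_Y=0$; likewise $V_\eta=(0,0,1)$ is constant, so $\nabla^\varphi_{e_1}V_\eta=0$. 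As a bonus this recovers the stated tension field, since $\tau(\varphi)=\nabla^\varphi_{e_1}(A'V_Y)=A''V_Y$. From parallelism and the Leibniz rule one obtains $\nabla^\varphi_{e_1}(fV_Y)=f'V_Y$, hence $\overline{\Delta}(fV_Y)=-f''V_Y$ and, iterating, $\overline{\Delta}^2(fV_Y)=f^{(4)}V_Y$, which is the first identity; it also yields $\nabla^\varphi_{e_1}\tau(\varphi)=A'''V_Y$, needed below.

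With these facts each remaining line is a one-step substitution. For the second identity, ${\rm trace}\langle fV_Y,d\varphi\cdot\rangle d\varphi\cdot=\langle fV_Y,A'V_Y\rangle A'V_Y=(A')^2fV_Y=|d\varphi|^2fV_Y$, so the argument of $\overline{\Delta}$ already vanishes. The remaining entries follow by pairing the appropriate powers of $A',A'',A''',f,f',f''$ against $V_Y$ using $\langle V_Y,V_Y\rangle=1$: for example $2\langle d\tau(\varphi),d\varphi\rangle=2\langle A'''V_Y,A'V_Y\rangle=2A'''A'$ gives the third line; $-2\,{\rm trace}\langle\tau(\varphi),d(fV_Y)\cdot\rangle d\varphi\cdot=-2\langle A''V_Y,f'V_Y\rangle A'V_Y=-2A''A'f'V_Y$ gives the sixth; and the iterated traces of the ninth and tenth lines reduce to $(A')^4fV_Y$ and $-2(A')^4fV_Y$ by applying the same pairing twice. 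Lines eight and twelve use $\overline{\Delta}(fV_Y)=-f''V_Y$, while line thirteen is just $|d\varphi|^4=(A')^4$.

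I expect no genuine obstacle. The only conceptual point — and the step I would state most carefully — is the parallelism $\nabla^\varphi V_Y=\nabla^\varphi V_\eta=0$, since it is what makes every curvature-type and cross term in \eqref{I2-general-case} collapse onto the $V_Y$-direction with no $V_\eta$-component. Once that is in place, the whole lemma is routine term-by-term bookkeeping on the one-dimensional domain, and I would simply tabulate the thirteen substitutions rather than belabour each one.
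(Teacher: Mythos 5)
Your proposal is correct and takes essentially the same route as the paper, which omits the proof of this lemma entirely, remarking only that it follows by ``computations similar to those of Proposition~\ref{proposizione-I-esplicito-toro-sfera}'', i.e.\ the same term-by-term substitution into \eqref{I2-general-case} in the frame $V_Y,V_\eta$ that you carry out. Your one addition is to make explicit the structural reason the bookkeeping trivialises here --- the image lies in the totally geodesic equator, so $\nabla^\varphi V_Y=\nabla^\varphi V_\eta=0$ and no $V_\eta$-components or cross terms arise (unlike the small-circle case, where $B(Y,Y)=-\eta$ in \eqref{IIfund-form} produces them) --- and all thirteen substitutions, including the recovery of $\tau(\varphi)=A''\,V_Y$, check out.
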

\begin{lemma}\label{lemma14-noncompact}
\[\begin{array}{rcl}
\overline{\Delta}^2\left ( fV_\eta\right ) &=& f^{(4)}V_\eta\\ \nonumber
\overline{\Delta}\left ( {\rm trace}\langle fV_\eta,d\varphi \cdot \rangle d\varphi \cdot- |d\varphi|^2\,fV_\eta \right) &=& \left [ 2 (A'')^2\,f+2\,A'''\,A'\,f+4\,A''\,A'\,f'+(A')^2 \,f''\right ]V_\eta\\ \nonumber
2\langle d\tau(\varphi),d\varphi \rangle fV_\eta &=& 2\, A'''\,A'\,f\,V_\eta\\ \nonumber
|\tau(\varphi)|^2 fV_\eta &=&(A'')^2\, f\,V_\eta\\ \nonumber
-2\,{\rm trace}\langle fV_\eta,d\tau(\varphi) \cdot \rangle d\varphi \cdot &=& 0\\ \nonumber
-2 \,{\rm trace} \langle \tau(\varphi),d(fV_\eta) \cdot \rangle d\varphi \cdot &=&0\\ \nonumber
- \langle \tau(\varphi),fV_\eta \rangle \tau(\varphi)&=&0 \\ \nonumber
{\rm trace} \langle d\varphi \cdot,\overline{\Delta}(fV_\eta) \rangle d\varphi \cdot &=&0\\ \nonumber
{\rm trace}\langle d\varphi \cdot,\left ( {\rm trace}\langle
fV_\eta,d\varphi \cdot \rangle d\varphi \cdot \right ) \rangle d\varphi \cdot&=& 0
\\\nonumber
-2 |d\varphi|^2\, {\rm trace}\langle d\varphi \cdot,fV_\eta \rangle
 d\varphi \cdot&=&0\\ \nonumber
 2 \langle d(fV_\eta),d\varphi\rangle \tau(\varphi) &=&0\\ \nonumber
-|d\varphi|^2 \,\overline{\Delta}(fV_\eta)&=& (A')^2\,f''\,V_\eta\\ \nonumber
|d\varphi|^4 fV_\eta&=& (A')^4\,f\,V_\eta\nonumber
\end{array}
\]
\end{lemma}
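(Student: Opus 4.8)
The plan is to reduce the whole of \eqref{I2-general-case} to elementary one-variable calculus by first recording the structural facts about $\varphi$ that drive every summand. Taking the global unit frame $e_1 = \partial/\partial\gamma$ on $\R$ (so that $\nabla^{\R}_{e_1}e_1 = 0$ and hence $\overline{\Delta}(\cdot) = -\nabla^{\varphi}_{e_1}\nabla^{\varphi}_{e_1}(\cdot)$), differentiating \eqref{A-harmonic-examples} gives $d\varphi(e_1) = A' V_Y$, whence $|d\varphi|^2 = (A')^2$. Next, differentiating $V_Y = (-\sin A, \cos A, 0)$ and $V_\eta = (0,0,1)$ in $\R^3$ yields $\tfrac{d}{d\gamma}V_Y = -A'\varphi$ and $\tfrac{d}{d\gamma}V_\eta = 0$; since $\varphi$ is the outward unit normal to $\s^2$ along the image, both derivatives are normal to $\s^2$, so
\[
\nabla^{\varphi}_{e_1}V_Y = 0, \qquad \nabla^{\varphi}_{e_1}V_\eta = 0,
\]
i.e. $V_Y$ and $V_\eta$ are \emph{parallel} sections of $\varphi^{-1}T\s^2$. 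This is the crucial observation: it yields $\overline{\Delta}(fV) = -f'' V$ and $\overline{\Delta}^2(fV) = f^{(4)}V$ for $V \in \{V_Y, V_\eta\}$, which accounts for the first line of Lemma~\ref{lemma14-noncompact}. Finally, using $\tau(\varphi) = A'' V_Y$ (already recorded) and the parallelism of $V_Y$, one gets $\nabla^{\varphi}_{e_1}\tau(\varphi) = A''' V_Y$, so $\langle d\tau(\varphi), d\varphi\rangle = A'''A'$ and $|\tau(\varphi)|^2 = (A'')^2$.

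With these formulas the second step is bookkeeping: substitute $V = fV_\eta$ into each of the thirteen summands and evaluate the traces over the single frame vector $e_1$. The decisive simplification is $V_Y \perp V_\eta$ at every image point, so every summand pairing $fV_\eta$ against something proportional to $d\varphi(e_1) = A' V_Y$ or to $\tau(\varphi) = A'' V_Y$ vanishes. This disposes of all seven identically-zero lines: ${\rm trace}\langle fV_\eta, d\tau(\varphi)\cdot\rangle d\varphi\cdot$, ${\rm trace}\langle \tau(\varphi), d(fV_\eta)\cdot\rangle d\varphi\cdot$, $\langle \tau(\varphi), fV_\eta\rangle\tau(\varphi)$, ${\rm trace}\langle d\varphi\cdot, \overline{\Delta}(fV_\eta)\rangle d\varphi\cdot$, the iterated-trace term, $|d\varphi|^2\,{\rm trace}\langle d\varphi\cdot, fV_\eta\rangle d\varphi\cdot$, and $\langle d(fV_\eta), d\varphi\rangle \tau(\varphi)$ each carry a factor $\langle V_\eta, V_Y\rangle = 0$. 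The remaining $V_\eta$-valued terms are read off directly: $2\langle d\tau(\varphi), d\varphi\rangle fV_\eta = 2A'''A' f V_\eta$, $|\tau(\varphi)|^2 fV_\eta = (A'')^2 f V_\eta$, $-|d\varphi|^2\overline{\Delta}(fV_\eta) = (A')^2 f'' V_\eta$, and $|d\varphi|^4 fV_\eta = (A')^4 f V_\eta$.

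The one genuinely computational line, and the step I expect to require the most care (chiefly in keeping the sign convention of $\overline{\Delta}$ straight), is the rough Laplacian of the twist term ${\rm trace}\langle fV_\eta, d\varphi\cdot\rangle d\varphi\cdot - |d\varphi|^2 fV_\eta$. Its first piece vanishes again by $V_Y \perp V_\eta$, leaving $-(A')^2 f\,V_\eta$; applying $\overline{\Delta}$ and invoking the parallelism of $V_\eta$ turns the task into computing $\big((A')^2 f\big)''$, and the Leibniz expansion
\[
\big((A')^2 f\big)'' = 2(A'')^2 f + 2A'''A' f + 4A''A' f' + (A')^2 f''
\]
reproduces exactly the four-term right-hand side claimed in the lemma. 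Summing the thirteen contributions gives Lemma~\ref{lemma14-noncompact}. The companion Lemma~\ref{lemma13-noncompact} for $fV_Y$ follows by the identical procedure, with the orthogonality cancellations reversed so that the surviving terms land in $V_Y$; I would carry out the two computations in parallel.
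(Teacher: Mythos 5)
Your proof is correct and follows essentially the same route as the paper, which simply states that the terms of \eqref{I2-general-case} are computed as in Proposition~\ref{proposizione-I-esplicito-toro-sfera}: you substitute $fV_\eta$ term by term, use $d\varphi(\partial/\partial\gamma)=A'V_Y$, $\tau(\varphi)=A''V_Y$ and the orthogonality $\langle V_Y,V_\eta\rangle=0$ to kill the seven vanishing lines, and reduce the twist term to the Leibniz expansion of $\bigl((A')^2 f\bigr)''$. Your explicit observation that $V_Y$ and $V_\eta$ are parallel sections (since the image lies in the equator, a geodesic, so $\tfrac{d}{d\gamma}V_Y=-A'\varphi$ is normal to $\s^2$) is exactly the simplification implicit in the paper's computation, and all your formulas, including the signs under the convention $\overline{\Delta}=-\nabla^\varphi_{e_1}\nabla^\varphi_{e_1}$, check out.
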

\renewcommand{\arraystretch}{1.}
Next, we insert the results given in Lemmata\link\ref{lemma13-noncompact} and  \ref{lemma14-noncompact} into \eqref{I2-general-case}. Then, adding up all the terms,  we obtain:
\begin{equation}\label{prima-espressione-I-noncompact}
I_2 (f\,V_Y)=f^{(4)}\,V_Y 
\end{equation}
and
\begin{equation}\label{seconda-espressione-I-noncompact}
I_2 (f\,V_\eta)= \left [f^{(4)}+2(A')^2\,f''+4A''\,A'\,f'+\left(4 A'''\,A'+3(A'')^2+(A')^4\right)\,f \right ]\,V_\eta \,.
\end{equation} 
Now, let $V=f_1 \,V_Y+f_2 \,V_ \eta\,$, assume that $f_1,\,f_2$ have compact support and denote by $D_V$ the support of $V$. Using \eqref{prima-espressione-I-noncompact} and \eqref{seconda-espressione-I-noncompact}, we find:
\begin{eqnarray}\label{IV,V-noncompact}
\int_{\R} \langle I_2(V),V \rangle \, d\gamma= \int_{D_V} &&\Big [f_1^{(4)}\,f_1 +f_2^{(4)}\,f_2+ \left((A')^2 f_2\right)'' f_2+(A')^2 f_2'' f_2
 \\ \nonumber
&& +\left(2 A'''\,A'+(A'')^2+(A')^4\right)\,f_2^2\Big ] d\gamma
\end{eqnarray}
Since $f_1,\,f_2$ and all their derivatives vanish on the boundary of $D_V$, performing suitable partial integrations it is easy to verify that \eqref{IV,V-noncompact} can be rewritten in the following more convenient way:
\begin{eqnarray}\label{IV,V-noncompact-bis}
\int_{\R} \langle I_2(V),V \rangle \, d\gamma= \int_{D_V} &\Big [(f_1'')^2 +\left(f_2''+(A')^2\,f_2\right)^2\\ \nonumber
& +\left((A'')^2+2A'''\,A'\right)\,f_2^2\Big ] d\gamma
\end{eqnarray}
It follows immediately that a sufficient condition to ensure that $\varphi$ is strictly stable is 
\begin{equation}\label{cond-suff-noncompact}
(A'')^2+2A'''\,A' \geq 0 \quad {\rm on} \,\,\R\,.
\end{equation}
Finally, a routine verification, using the explicit expression \eqref{A-gamma}, shows that \eqref{cond-suff-noncompact} is equivalent to \eqref{condiz-noncompact} and so the proof of Theorem\link\ref{th:noncompact} is completed.
\begin{remark}
We point out that in \eqref{IV,V-noncompact-bis} the quantity $(f_1'')^2+\left(f_2''+(A')^2\,f_2\right)^2$ is exactly the square of the norm of $J(V)$, where $J$ is the classical Jacobi operator.
\end{remark}
\begin{remark}
If \eqref{condiz-noncompact} is not satisfied, then $\varphi$ may be unstable. To see this, at least in the class of $C^5$-differentiable functions, we consider the case that $a=1, \,c=-2,\,b=d=0$, i.e., $A(\gamma)=\gamma^3-2\gamma$. We choose $f_1\equiv 0$ and
\[
f_2(\gamma)=\cos^6 \gamma \quad {\rm if} \,-\,\frac{\pi}{2} \leq \gamma \leq \frac{\pi}{2}, \,\, {\rm and}\,\,f_2(\gamma)=0 \,\, {\rm elsewhere} \,.
\] 
Then, replacing into \eqref{IV,V-noncompact-bis}, we find: 
\begin{eqnarray}\label{controesempio-noncompact}\nonumber
\int_{\R} \langle I_2(V),V \rangle \, d\gamma=& &\int_{-\pi/2}^{\pi/2} \Big [\left(36 \gamma^2+12 \left(3 \gamma^2-2\right)\right) \cos ^{12}\gamma\\ 
&& +\left(\left(3 \gamma^2-2\right)^2 \cos
   ^6\gamma-6 \cos ^6\gamma+30 \sin ^2\gamma \cos ^4\gamma\right)^2\Big ] d\gamma \simeq -3.537 <0\,.
\end{eqnarray}
\end{remark}
\section{Reduced Index and Nullity}\label{variation-of-example-toro-sfera}
A natural continuation of the study that we have undertaken in the previous sections would be to investigate the following family of equivariant maps:
\begin{align}\label{mappegeneralizzanti-toro-sfera}
\varphi_\alpha \,:\, \s^{n-1} (R) \times \s^1 & \to \s^n \hookrightarrow  \R^n \quad \times \R \\ \nonumber
(R\,\underline{\gamma},\qquad \vartheta)& \mapsto \, \left ( \sin \alpha(\vartheta) \, \underline{\gamma}, \,\cos  \alpha (\vartheta) \right ) \quad (\underline{\gamma} \in \s^{n-1},\,n \geq 2) \,.
\end{align}
The bienergy of a map as in \eqref{mappegeneralizzanti-toro-sfera} depends only on the function $\alpha$. Then it is natural to define the \textit{reduced bienergy} on $C^{\infty} \left(\s^1\right)$, as follows:
\begin{equation}\label{reduced-bienergy-generalizza-toro-sfera}
E_{2,{\rm red}}(\alpha)=E_2 \left (\varphi_\alpha \right )= \frac{1}{2}\, {\rm Vol}\left(\s^{n-1}(R) \right )\, \int_0^{2\pi } \left [\alpha'' - \frac{(n-1)}{2\,R^2}\sin (2\,\alpha) \right ]^2 d\vartheta
\end{equation}
The condition of biharmonicity for $\varphi_{\alpha}$ is the following ODE, which can be derived as the Euler-Lagrange equation of the reduced bienergy (see \cite{Mont-Ratto3}):
\begin{equation}\label{biarmoniatorosfera-generalizzato}
    \alpha^{(4)} - \alpha'' \, \left [ 2 \, \frac{(n-1)}{R^2} \, \cos (2 \alpha)\right ] + (\alpha')^2 \, \left [ 2 \, \frac{(n-1)}{R^2} \, \sin (2 \alpha)\right ] + \, \frac{(n-1)^2}{2 R^4} \, \sin (2\alpha) \, \cos (2 \alpha) \, = \, 0 \,
\end{equation}
and we still have the constant solution $\alpha \equiv \pi /4$. There are two basic differences between this example and the case of the maps which we have studied in Theorem\link\ref{Index-theorem-toro-sfera}: the higher dimension of the domain and the effects which derive from the fact that we admit a radius $R\neq1$. For these reasons, the computation of index and nullity becomes very complicated and therefore a rather natural approach in this context is to investigate what we shall refer to as the \textit{reduced index and nullity}. More precisely, let $\alpha^*$ denote the constant solution $\alpha\equiv \pi / 4$. We shall consider the reduced bienergy \eqref{reduced-bienergy-generalizza-toro-sfera} and its Hessian at the critical point $\alpha^*$
\begin{equation}\label{Hessian-definition-equiv}
H(E_{2,\rm red})_{\alpha^*} (V,W)= \left . \frac{\partial^2}{\partial t \partial s}\right |_{(0,0)}  E_{2,\rm red} (\alpha^*_{t,s}) \,,
\end{equation}
$\alpha^*_{t,s}$ being a two-parameter variation of $\alpha^*$ given by
\begin{equation}\label{equiv-2-par-variation}
\alpha^*_{t,s}(\vartheta)=\frac{\pi}{4}+t\, v(\vartheta)+s\, w(\vartheta) \,
\end{equation}
where $v,\,w \in C^{\infty}\left (\s^1 \right )$. As usually, the tangent vectors $V$ and $W$ to $C^{\infty}\left (\s^1 \right )$ at $\alpha^*$ are identified with $v$ and $w$, respectively.

Then, thinking $\s^n$ as the warped product $\left(\s^{n-1}\times [0,\pi], \sin^2\alpha \ g_{\s^{n-1}} + d\alpha^2\right)$,  we can identify $V$ and $W$ with the following sections of $\varphi_{\alpha^*}^{-1} T\s^n$
\[
V=\left .\frac{d}{d t} \right |_{t=0}\varphi_{\alpha^*_{t,0}}= v(\vartheta)\, \frac{\partial}{\partial \alpha} \quad {\rm and}\quad
W=\left .\frac{d}{d s} \right |_{s=0}\varphi_{\alpha^*_{0,s}}= w(\vartheta)\, \frac{\partial}{\partial \alpha} \,.
\]
From a geometric point of view, we observe that \eqref{Hessian-definition-equiv} is the restriction of the Hessian \eqref{Hessian-definition} to the following linear subspace of $\mathcal{C}\left(\varphi_{\alpha^*}^{-1} T\s^n\right)$:
\begin{equation}\label{sottospazio-equiv}
\mathcal{V}_{\rm red}=\left \{ V\in \mathcal{C}\left(\varphi_{\alpha^*}^{-1} T\s^n\right)\,\,:\,\,
V\left(\underline{\gamma},\vartheta \right )= v(\vartheta)\, \frac{\partial}{\partial \alpha},
\,\,v \in C^{\infty}\left (\s^1 \right )  \right \} \,,
\end{equation}
In particular, we observe that
\begin{equation}\label{I2-equiv}
H(E_{2,\rm red})_{\alpha^*} (V,W)=H(E_2)_{\varphi_{\alpha^*}}(V,W)={\rm Vol}\left(\s^{n-1} (R)\right )\, \int_0^{2\pi} \langle I_2(V),W \rangle d\vartheta \, .
\end{equation}
Moreover, the operator $I_2$ preserves the subspace $\mathcal{V}_{\rm red}$: this is a natural consequence of the symmetries of the problem and can be formally verified by a direct application of the general expression for $I_2$ (see \eqref{I2-general-case}). Therefore, the restriction of $I_2$ to $\mathcal{V}_{\rm red}$, which we shall denote $I_{2,{\rm red}}$, has a discrete spectrum and so we can define ${\rm Index}_{\rm red}(\varphi_{\alpha^*})$ and ${\rm Nullity}_{\rm red}(\varphi_{\alpha^*})$
precisely as in \eqref{Index-definition} and \eqref{Nullity-definition}. Our result in this context is the following:
\begin{proposition}\label{Index-theorem-equivariant}
Let $\varphi_{\alpha^*}\,:\,\s^{n-1}(R) \times \s^1 \to \s^n$ be the proper biharmonic map defined by \eqref{mappegeneralizzanti-toro-sfera} with $\alpha(\vartheta) \equiv \alpha^*$, where $\alpha^*=\pi /4$. If
\begin{equation}\label{condition-equiv-prop}
\frac{\sqrt {n-1}}{R} \not\in \n^* \,,
\end{equation}
then
\begin{align}\label{ind-null-equi}
& {\rm Nullity}_{\rm red}(\varphi_{\alpha^*})= 0 \\ \nonumber
& {\rm Index}_{\rm red}(\varphi_{\alpha^*})=1+2\,\left \lfloor
\frac{\sqrt {n-1}}{R}
\right \rfloor \,,\nonumber
\end{align}
where $\lfloor x\rfloor$ denotes the integer part of $x \in \R$.
If
\begin{equation}\label{condition-equiv-prop-bis}
\frac{\sqrt {n-1}}{R}\in \n^* \,,
\end{equation}
then
\begin{align}\label{ind-null-equi-bis}
& {\rm Nullity}_{\rm red}(\varphi_{\alpha^*})= 2 \\ \nonumber
& {\rm Index}_{\rm red}(\varphi_{\alpha^*})= 1+2\,\left (
\frac{\sqrt {n-1}}{R}
 -1 \right ) \,.\nonumber
\end{align}
\end{proposition}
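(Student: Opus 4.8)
The plan is to exploit the identity \eqref{I2-equiv}, which says that the reduced second variation operator $I_{2,\rm red}$ is nothing but the Hessian of the one-dimensional functional $E_{2,\rm red}$ of \eqref{reduced-bienergy-generalizza-toro-sfera}; this reduces the whole problem to the spectral analysis of a fourth-order ODE operator on $\s^1$. First I would identify $\mathcal{V}_{\rm red}$ with $C^\infty(\s^1)$ by writing $V=v(\vartheta)\,\partial/\partial\alpha$, and compute $I_{2,\rm red}$ directly from the second variation of \eqref{reduced-bienergy-generalizza-toro-sfera}. Setting $c=(n-1)/R^2$ and inserting $\alpha^*_{t,s}=\pi/4+t\,v+s\,w$, one uses $\sin(2\alpha^*_{t,s})=\cos\bigl(2(tv+sw)\bigr)=1-2(tv+sw)^2+\dots$ (only terms up to second order are needed); differentiating twice in $t,s$, evaluating at the origin and integrating by parts over the closed curve $\s^1$ (so that no boundary terms survive) yields
\[
H(E_{2,\rm red})_{\alpha^*}(V,W)=\vol\bigl(\s^{n-1}(R)\bigr)\int_0^{2\pi}\bigl(v^{(4)}-c^2 v\bigr)\,w\,d\vartheta,
\]
which identifies $I_{2,\rm red}(v)=v^{(4)}-c^2 v$. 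The same expression can be recovered, as a consistency check, by restricting the general formula \eqref{I2-general-case} to $\mathcal{V}_{\rm red}$, but the variational route is shorter.

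Next I would diagonalise $I_{2,\rm red}$ using the Fourier basis of $C^\infty(\s^1)$. Since the eigenfunctions of $\Delta$ on $\s^1$ are the constants together with $\{\cos(m\vartheta),\sin(m\vartheta)\}_{m\geq 1}$, for which $v''=-m^2 v$, the operator $I_{2,\rm red}$ is diagonal with eigenvalues
\[
\Lambda_m=m^4-c^2,\qquad m\in\n,
\]
of multiplicity $1$ when $m=0$ and multiplicity $2$ when $m\geq 1$. Hence $\Lambda_0=-c^2<0$ always contributes $1$ to the index, while for $m\geq 1$ one has $\Lambda_m<0$, $=0$ or $>0$ according to whether $m$ is less than, equal to, or greater than $\sqrt{c}=\sqrt{n-1}/R$.

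It then remains to count. If $\sqrt{n-1}/R\notin\n^*$, no $\Lambda_m$ vanishes, so ${\rm Nullity}_{\rm red}=0$; the indices $m\geq 1$ with $\Lambda_m<0$ are exactly $1\leq m\leq\lfloor\sqrt{n-1}/R\rfloor$, each of multiplicity $2$, giving ${\rm Index}_{\rm red}=1+2\lfloor\sqrt{n-1}/R\rfloor$, which is \eqref{ind-null-equi}. If instead $m_0:=\sqrt{n-1}/R\in\n^*$, then $\Lambda_{m_0}=0$ has multiplicity $2$, whence ${\rm Nullity}_{\rm red}=2$, while the negative eigenvalues come from $1\leq m\leq m_0-1$, giving ${\rm Index}_{\rm red}=1+2(m_0-1)=1+2(\sqrt{n-1}/R-1)$, which is \eqref{ind-null-equi-bis}.

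I do not expect a serious obstacle here: the only points requiring care are the bookkeeping of the constant mode $m=0$, which is responsible for the persistent $+1$ and reflects the instability already visible in the compact examples, and the correct treatment of the borderline integer case, where the zero eigenvalue is transferred from the index to the nullity.
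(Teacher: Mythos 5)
Your proposal is correct and follows essentially the same route as the paper: compute the second variation of the reduced bienergy at $\alpha^*=\pi/4$ to obtain $I_{2,\rm red}(v)=v^{(4)}-\tfrac{(n-1)^2}{R^4}v$, diagonalise in the Fourier basis of $\s^1$ to get the eigenvalues $m^4-\tfrac{(n-1)^2}{R^4}$ with multiplicities $1$ (for $m=0$) and $2$ (for $m\geq 1$), and count. The bookkeeping of the constant mode and of the borderline integer case matches the paper's argument exactly.
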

\begin{remark}\label{remark-reduced-index}
Clearly, each eigenvalue $\lambda$ of $I_{2,{\rm red}}$ is also an eigenvalue of $I_2$ and $\mathcal{V}_{{\rm red},\lambda} \subseteq \mathcal{V}_{\lambda}$. Therefore, it is always true that ${\rm Index}_{\rm red}(\varphi) \leq {\rm Index}(\varphi)$ and ${\rm Nullity}_{\rm red}(\varphi) \leq {\rm Nullity}(\varphi)$. By way of example, if $\varphi_{k} : {\mathbb T}^2 \to \s^2$ is the proper biharmonic map defined in \eqref{equivdatoroasfera-bis*}, then it is not difficult to verify, using the technique of Proposition\link\ref{Index-theorem-equivariant}, that
\begin{align*}\label{ind-null-equi-bis}
& {\rm Nullity}_{\rm red}(\varphi_k)= 2 \\ \nonumber
& {\rm Index}_{\rm red}(\varphi_k)= 1+2\,(k-1) \,.\nonumber
\end{align*}
\end{remark}
%
\subsection{Proof of Proposition\link\ref{Index-theorem-equivariant}}
We compute the reduced Hessian \eqref{Hessian-definition-equiv} with respect to a two-parameter variation $\alpha^*_{t,s}$ as in \eqref{equiv-2-par-variation}. We obtain:
\begin{equation}\label{Hessian-equiv-explicit}
\left . \frac{\partial^2}{\partial t \partial s}\right |_{(0,0)}  E_{2,{\rm red}} (\alpha^*_{t,s})=c
\int_0^{2 \pi} \left [v''(\vartheta ) w''(\vartheta )-\frac{(n-1)^2 v(\vartheta ) w(\vartheta )}{R^4}\right] d\vartheta \,,
\end{equation}
where $c={\rm Vol}\left(\s^{n-1} (R)\right )\,$.
Comparing with \eqref{I2-equiv} and integrating by parts we find
\begin{equation}\label{I2-equiv-explicit}
I_{2,{\rm red}}(V)=\left [v^{(4)}(\vartheta )-\frac{(n-1)^2 }{R^4}\,v(\vartheta )\right ]\,\frac{\partial}{\partial \alpha} \,.
\end{equation}
Now, let
\begin{small}
\begin{equation}\label{base-equiv-case}
 \mathcal{B}= \left \{U_0= \frac{1}{\sqrt{2\pi}}\,\frac{\partial}{\partial \alpha} ,\,U_m= \frac{1}{\sqrt \pi}\,\cos (m\vartheta)\,\frac{\partial}{\partial \alpha} ,\,\,
V_m= \frac{1}{\sqrt \pi}\,\sin (m\vartheta)\,\frac{\partial}{\partial \alpha}, \,\,m \geq 1 \right \} \,.
 \end{equation}
\end{small}
Then $\mathcal{B}$ is an orthonormal basis of $\mathcal{V}_{\rm red}$. Moreover, it is immediate to check that the vectors $U_0,U_m$ and $V_m$ are eigenvectors for the operator \eqref{I2-equiv-explicit} with eigenvalues
\begin{equation}\label{eigenvalues-reduced}
\lambda_m=m^4-\frac{(n-1)^2 }{R^4}\,, \quad \,\, m \in \n .
\end{equation}\label{multiplicities-reduced}
Then the multiplicities are
\begin{equation}
\nu(\lambda_0)=1,\,\,\,\,\nu(\lambda_m)=2 \quad\,\,\, {\rm for} \,\, m \geq 1 \,,
\end{equation}
and now the conclusion of the proof follows easily.

\subsection{Conformal diffeomorphisms}\label{conformal-diffeo} Proper biharmonic conformal diffeomorphisms of $4$-dimensional Riemannian manifolds play an interesting role in the study of the bienergy functional. A basic example (see \cite{Baird}) is the inverse stereographic projection $\varphi:\R^4 \to \s^4$. We proved in \cite{MOR2} that its restriction to the open unit ball $B^4$ is strictly stable with respect to compactly supported equivariant variations. More generally, the same is true for homothetic dilations of $\varphi$ provided that we consider the restrictions to a ball whose radius ensures that the image is contained in the upper hemisphere of $\s^4$ (see \cite{MOR2} for details). 

Here we study another example which was found in \cite{Baird}. More precisely, using polar coordinates on $\R^4 \setminus\{ O \}$, let
\begin{align}\label{mappe-Baird-Fardoun}
\varphi_\alpha \,:\R^4\setminus \{O \}=& \s^{3} \times (0,+\infty)  \to \s^{3} \times \R \\ \nonumber
(&\underline{\gamma},\qquad r)\quad \quad  \mapsto \, \left (\underline{\gamma}, \,\alpha( r) \right )  \,,
\end{align}
where $\alpha(r)=\log r $. It was observed in \cite{Baird} that the map $\varphi_\alpha$ in \eqref{mappe-Baird-Fardoun} is a proper biharmonic conformal diffeomorphism. As in Theorem\link\ref{th:noncompact}, we study the stability of $\varphi_\alpha$ with respect to compactly supported variations. More precisely, since the domain here is not $1$-dimensional, we just consider equivariant variations:
\begin{align}\label{mappe-Baird-Fardoun-equiv-var}
\varphi_{\alpha_{t,s}} \,:\R^4\setminus\{O \}=& \s^{3} \times (0,+\infty)  \to \s^{3} \times \R \\ \nonumber
(&\underline{\gamma},\qquad r)\quad \quad  \mapsto \, \left (\underline{\gamma}, \,\alpha( r)+t v(r)+sw(r) \right )  \,,
\end{align}
where $v,w$ are compactly supported functions on $(0,+\infty)$. We shall prove the following result:
\begin{proposition}\label{prop-Baird-ex} Let $\varphi_\alpha \,:\R^4\setminus \{O \}  \to \s^{3} \times \R $ be the proper biharmonic conformal diffeomorphism defined in \eqref{mappe-Baird-Fardoun}. 
Then $\varphi_\alpha$ is strictly stable with respect to compactly supported equivariant variations.
\end{proposition}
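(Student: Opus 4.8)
The plan is to adapt the reduced--variation strategy of Proposition~\ref{Index-theorem-equivariant} to the non--compact, compactly supported setting of Theorem~\ref{th:noncompact}. Since the target $\s^{3}\times\R$ is not a sphere, the formula \eqref{I2-general-case} does not apply directly; instead I would exploit the fact that, for the equivariant maps \eqref{mappe-Baird-Fardoun}, the bienergy depends only on the radial profile $\alpha$, exactly as in \eqref{reduced-bienergy-generalizza-toro-sfera}. Writing the domain metric as $dr^{2}+r^{2}g_{\s^{3}}$ and the target metric as the product $g_{\s^{3}}+d\alpha^{2}$, the first step is to compute the tension field of $\varphi_\alpha$. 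A direct computation---in which the $\s^{3}$--components cancel because the Christoffel symbols of $r^{2}g_{\s^{3}}$ and of $g_{\s^{3}}$ coincide, while the $\R$--factor of the target is totally geodesic and flat---should yield
\begin{equation*}
\tau(\varphi_\alpha)=\left(\alpha''+\frac{3}{r}\,\alpha'\right)\frac{\partial}{\partial\alpha}\,,
\end{equation*}
so that for $\alpha=\log r$ one recovers $\tau(\varphi_\alpha)=(2/r^{2})\,\partial/\partial\alpha\neq0$, which re--confirms properness.

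The decisive observation is that the operator $L\alpha:=\alpha''+(3/r)\,\alpha'$ is \emph{linear} and that $\partial/\partial\alpha$ is a unit field in the product target, so that $|\tau(\varphi_\alpha)|^{2}=(L\alpha)^{2}$ and the reduced bienergy is governed by the purely \emph{quadratic} integrand $(L\alpha)^{2}\,r^{3}$, weighted by the four--dimensional volume element $r^{3}\,dr$. Consequently, for an equivariant compactly supported variation with associated field $V=v(r)\,\partial/\partial\alpha$, the Hessian is obtained by polarisation and reduces to the same quadratic form evaluated on $v$:
\begin{equation*}
H(E_2)_{(\varphi_\alpha;D)}(V,V)=\vol(\s^{3})\int_0^{+\infty}\left(v''+\frac{3}{r}\,v'\right)^{2}r^{3}\,dr\,\geq\,0\,,
\end{equation*}
the integral being finite because $v$ is compactly supported in $(0,+\infty)$. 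Thus stability is immediate and, in contrast with \eqref{IV,V-noncompact-bis}, no integration by parts into a sum of squares is even needed.

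To upgrade this to \emph{strict} stability I would then examine the kernel. The right--hand side vanishes precisely when $Lv=0$ on all of $(0,+\infty)$; since $(r^{3}v')'=r^{3}\,Lv$, this integrates to $v(r)=c_{1}+c_{2}\,r^{-2}$, i.e. $v$ must be a radial harmonic function on $\R^{4}\setminus\{O\}$. The only such $v$ that is compactly supported in $(0,+\infty)$ is $v\equiv0$. Hence $H(E_2)_{(\varphi_\alpha;D)}(V,V)>0$ for every nontrivial compactly supported equivariant field $V$, which is the assertion of Proposition~\ref{prop-Baird-ex}.

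The step demanding the most care is the tension field computation: one must check that the cross terms and the $\s^{3}$--curvature contributions really cancel, so that $\tau(\varphi_\alpha)$ is purely radial and $L$ is genuinely linear. It is precisely this linearity that collapses the a priori fourth--order second--variation problem into the elementary positivity statement above; once it is secured, the remaining steps are routine.
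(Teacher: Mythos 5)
Your proof is correct, and it reaches the conclusion by a mildly different route than the paper. Both arguments perform the same reduction: compute $\tau(\varphi_\alpha)=\bigl(\alpha''+\tfrac{3}{r}\alpha'\bigr)\partial/\partial\alpha$ and work with the one--dimensional functional $E_{2,{\rm red}}(\alpha)=\tfrac12\,{\rm Vol}(\s^3)\int_0^{+\infty}\bigl(\alpha''+\tfrac{3}{r}\alpha'\bigr)^2 r^3\,dr$. The paper then substitutes $r=e^u$ to get $\tfrac12 c\int(\beta''+2\beta')^2\,du$ and integrates by parts so that the Hessian becomes $c\int\bigl[(v'')^2+4(v')^2\bigr]du$, a manifestly positive--definite sum of squares whose vanishing forces $v$ to be a compactly supported constant, hence zero. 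You instead observe that $L\alpha=\alpha''+(3/r)\alpha'$ is linear, so $E_{2,{\rm red}}$ is quadratic and its Hessian is simply the polarized form $c\int(Lv)(Lw)\,r^3dr$; nonnegativity is then immediate, and strictness follows from integrating the kernel equation $(r^3v')'=0$ to $v=c_1+c_2r^{-2}$, which is compactly supported only if trivial. Both finishes are rigorous: yours dispenses with the change of variable and the integration by parts, while the paper's sum--of--squares form has the side benefit of exhibiting the Hessian as $\int|J(V)|^2$, with $J$ the classical Jacobi operator. The one point to tighten is your justification of the tension field: the cancellation is not that the Christoffel symbols of $r^2g_{\s^3}$ and $g_{\s^3}$ coincide, but that the intrinsic terms $\nabla^{\s^3}_{e_i}e_i$ cancel between $\nabla^{\varphi}_{e_i}d\varphi(e_i)$ and $d\varphi(\nabla^M_{e_i}e_i)$, leaving the radial contribution $-r\,\partial_r$ of the cone metric, which is what produces the $3\alpha'/r$ term.
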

\begin{proof}The tension field of a map of the type \eqref{mappe-Baird-Fardoun} is 
\[
\tau \left ( \varphi_\alpha\right )= \left [\alpha ''(r) + \frac{3}{r}\, \alpha'(r) \right ] \frac{\partial}{\partial \alpha}
\]
and so the reduced $2$-energy becomes:
\[
 E_{2,{\rm red}} (\alpha)=\frac{1}{2}\,c\, \int_0^{+\infty} \left [\alpha ''(r) + \frac{3}{r}\, \alpha'(r) \right ]^2\, r^3 \,dr \,,
\]
where $c={\rm Vol}\left ( \s^3 \right )$. It is convenient to make the following change of variable: $r=e^u$, $\beta(u)=\alpha(e^u)$. In terms of $\beta$, the reduced $2$-energy becomes:
\begin{equation}\label{Beta-2-energy}
 E_{2,{\rm red}} (\beta)=\frac{1}{2}\,c\,  \int_{-\infty}^{+\infty} \left [\beta ''(u) + 2\, \beta'(u) \right ]^2 \,du \,.
\end{equation}
Next, we compute the reduced Hessian \eqref{Hessian-definition-equiv} with respect to a two-parameter variation
\[
\beta_{t,s}= u + t v(u)+sw(u)\,,
\]
where $v,w$ are compactly supported smooth functions on $\R$ (note that, after the change of variable, $\alpha(r)= \log r$ corresponds to $\beta(u)=u$). We obtain:
\begin{equation}\label{Hessian-equiv-Baird}
\left . \frac{\partial^2}{\partial t \partial s}\right |_{(0,0)}  E_{2,{\rm red}} (\beta_{t,s})=c
\int_{-\infty}^{+\infty} \big [v''\,w''+2 v''\,w'+2 v'\, w''+4 v'\,
   w'\big] du \,.
\end{equation}
Now, since $v,w \in C_0^{\infty}(\R)$, integrating by parts into \eqref{Hessian-equiv-Baird} we find: 
\[
\int_{-\infty}^{+\infty} \,\langle I_{2,{\rm red}}(V),W \rangle \,du=\int_{-\infty}^{+\infty} \,\big[ v^{(4)}-4 v''\big] \,w \,\, du\,.
\]
Finally, again integrating by parts, we conclude that
\[
\int_{-\infty}^{+\infty} \,\langle I_{2,{\rm red}}(V),V \rangle \,du=\int_{-\infty}^{+\infty} \,\big[ (v'')^2+4 (v')^2\big]\,\, du \left(=\frac{1}{c} \int_{\r^4\setminus\{O\}} |J(V)|^2 dv_M\right)
\]
and so the proof is completed. 
\end{proof}
\subsection{Further developments}\label{Further-developments} A further generalisation of \eqref{mappegeneralizzanti-toro-sfera} leads us to study the case of biharmonic maps into the rotationally symmetric ellipsoid defined by
\begin{equation}\label{ellissoide}
\mathcal{Q} ^n(b)= \left \{ [x_1,\ldots,x_{n+1}]\in \R^{n+1}\,\,:\,\, x_1^2+\ldots +x_n^2+\frac{x_{n+1}^2}{b^2} =1\right \}\quad (b>0)\,.
\end{equation}
It is convenient to describe the ellipsoid $\mathcal{Q}^n(b)$ as
\begin{equation}\label{ellissoide-metrica}
\mathcal{Q} ^n(b)=\left ( \s^{n-1} \times [0,\pi],\sin^2 \alpha\,g_{\s^{n-1}}+K^2(\alpha)\,d\alpha^2 \right )\,,
\end{equation}
where $K(\alpha)=\sqrt{b^2\,\sin^2 \alpha+ \cos^2 \alpha}$.
Then we study equivariant maps
\begin{align}\label{mappegeneralizzanti-toro-ellissoide}
\varphi_\alpha \,:\, \s^{n-1} (R) \times \s^1 & \to \mathcal{Q}^n \hookrightarrow  \R^n \quad \times \R \\ \nonumber
(R\,\underline{\gamma},\qquad \vartheta)& \mapsto \, \left ( \sin \alpha(\vartheta) \, \underline{\gamma}, \,b\,\cos  \alpha (\vartheta) \right ) \quad (\underline{\gamma} \in \s^{n-1},\,n \geq 2) \,.
\end{align}
Now, the reduced bienergy of a map as in \eqref{mappegeneralizzanti-toro-ellissoide} is
\begin{equation}\label{reduced-bienergy-generalizza-toro-ellissoide}
E_{2,{\rm red}}(\alpha)= \frac{1}{2}\, {\rm Vol}\left(\s^{n-1}(R) \right )\, \int_0^{2\pi } \left [\alpha'' - \frac{(n-1)}{2\,R^2\,K^2(\alpha)}\sin (2\,\alpha)+\frac{K'(\alpha)}{K(\alpha)}\,(\alpha')^2 \right ]^2 K^2(\alpha) \,d\vartheta \,.
\end{equation}
By direct substitution into the Euler-Lagrange equation of \eqref{reduced-bienergy-generalizza-toro-ellissoide} we find that a constant function $\alpha(\vartheta)\equiv \alpha^*$ ($0< \alpha^*<\pi/2$) gives rise to a proper biharmonic map of the type \eqref{mappegeneralizzanti-toro-ellissoide} if and only if
\begin{equation}\label{biarmonia-ellissoide-alpha*}
\alpha^*=\frac{1}{2} \arccos \left [\frac{b-1}{b+1} \right ] \,.
\end{equation}
Following the method of proof of Proposition\link\ref{Index-theorem-equivariant} and computing we find that now the expression of the reduced index operator is:
\begin{equation}\label{I2-equiv-explicit-ellissoide}
I_{2,{\rm red}}(V)=\left [V^{(4)}-\frac{4 (n-1)^2}{b (b+1)^2 R^4}\,V\right ]\,\frac{\partial}{\partial \alpha} \,.
\end{equation}
Then it is easy to conclude that the eigenvalues of $I_{2,{\rm red}}$ are
\begin{equation*}\label{eigenvalues-reduced}
\lambda_m=m^4-\frac{4(n-1)^2 }{b(b+1)^2 R^4}\,, \quad \,\, m \in \n ,
\end{equation*}\label{multiplicities-reduced}
with multiplicities 
\begin{equation*}
\nu(\lambda_0)=1,\,\,\,\,\nu(\lambda_m)=2 \quad\,\,\, {\rm for} \,\, m \geq 1 \,.
\end{equation*}
By way of summary, we conclude that an extension of Proposition\link\ref{Index-theorem-equivariant} holds in this context where the curvature of the target is nonconstant. Indeed, we have
\begin{proposition}\label{Index-theorem-ellissoide} Let $\varphi_{\alpha^*}\,:\,\s^{n-1}(R) \times \s^1 \to \mathcal{Q}^n(b)$ be the proper biharmonic map defined by \eqref{mappegeneralizzanti-toro-ellissoide} with $\alpha(\vartheta) \equiv \alpha^*$, where $\alpha^*$ is given in \eqref{biarmonia-ellissoide-alpha*}. If
\begin{equation}\label{condition-equiv-prop-bis}
\sqrt[4]{\frac{4(n-1)^2 }{b(b+1)^2 R^4}} \not\in \n^* \,,
\end{equation}
then
\begin{align}\label{ind-null-equi-bis}
& {\rm Nullity}_{\rm red}(\varphi_{\alpha^*})= 0 \\ \nonumber
& {\rm Index}_{\rm red}(\varphi_{\alpha^*})=1+2\,\left \lfloor
\sqrt[4]{\frac{4(n-1)^2 }{b(b+1)^2 R^4}}
\right \rfloor \,,\nonumber
\end{align}
where $\lfloor x\rfloor$ denotes the integer part of $x \in \R$.
If
\begin{equation}\label{condition-equiv-prop-bis}
\sqrt[4]{\frac{4(n-1)^2 }{b(b+1)^2 R^4}}\in \n^* \,,
\end{equation}
then
\begin{align}\label{ind-null-equi-bis}
& {\rm Nullity}_{\rm red}(\varphi_{\alpha^*})= 2 \\ \nonumber
& {\rm Index}_{\rm red}(\varphi_{\alpha^*})= 1+2\,\left (
\sqrt[4]{\frac{4(n-1)^2 }{b(b+1)^2 R^4}}
 -1 \right ) \,.\nonumber
\end{align}
\end{proposition}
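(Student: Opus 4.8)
The plan is to exploit the equivariant symmetry in order to reduce everything to a one–dimensional variational problem, so that the general formula \eqref{I2-general-case} --- which is available only when the target is a round sphere --- is never needed. Indeed, every admissible variation in $\mathcal{V}_{\rm red}$ is encoded by a single function $v\in C^{\infty}(\s^1)$, so it suffices to study the second variation of the explicit reduced bienergy \eqref{reduced-bienergy-generalizza-toro-ellissoide}, exactly in the spirit of the proof of Proposition~\ref{Index-theorem-equivariant}. The first step I would carry out is to record the simplifications forced by the biharmonic value \eqref{biarmonia-ellissoide-alpha*}: a direct check gives $\sin^2\alpha^*=1/(b+1)$ and $\cos^2\alpha^*=b/(b+1)$, whence $K^2(\alpha^*)=b$ and $\sin(2\alpha^*)=2\sqrt b/(b+1)$, and confirms that the constant map $\alpha\equiv\alpha^*$ is a critical point of $E_{2,\rm red}$.

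Next I would substitute $\alpha=\alpha^*+t\,v+s\,w$ into \eqref{reduced-bienergy-generalizza-toro-ellissoide} and differentiate twice at $(t,s)=(0,0)$. Because the background solution is constant, so that $\alpha'=\alpha''=0$, and because only the quadratic part of the expansion enters the Hessian, the numerous cross–terms generated by the $\alpha$–dependent coefficients $K^2(\alpha)$, $K'(\alpha)/K(\alpha)$ and $\sin(2\alpha)/K^2(\alpha)$ collapse to expressions depending only on the values of these coefficients and of their first two derivatives at $\alpha^*$; integrating by parts on $\s^1$ (where periodicity annihilates the boundary terms) then yields the bilinear form $H(E_{2,\rm red})_{\alpha^*}(V,W)=c\int_0^{2\pi}\langle I_{2,\rm red}(V),W\rangle\,d\vartheta$ with $I_{2,\rm red}$ the self–adjoint fourth–order operator \eqref{I2-equiv-explicit-ellissoide}. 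The one delicate point here is checking that the potential coefficient assembles precisely into $4(n-1)^2/\bigl(b(b+1)^2R^4\bigr)$; the identities $K^2(\alpha^*)=b$ and $\sin(2\alpha^*)=2\sqrt b/(b+1)$ are the decisive ingredients, and setting $b=1$ recovers the sphere value $(n-1)^2/R^4$ of Proposition~\ref{Index-theorem-equivariant} as a consistency check.

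With the operator in hand, I would diagonalise it on the orthonormal Fourier basis $\mathcal{B}$ of \eqref{base-equiv-case}: each of $U_0,U_m,V_m$ is an eigenvector, giving the eigenvalues $\lambda_m=m^4-C$, where $C=4(n-1)^2/\bigl(b(b+1)^2R^4\bigr)$, with multiplicities $\nu(\lambda_0)=1$ and $\nu(\lambda_m)=2$ for $m\geq1$. Writing $\rho=\sqrt[4]{C}$, one has $\lambda_m<0\iff m<\rho$ and $\lambda_m=0\iff m=\rho$, so counting the negative and zero eigenvalues is purely arithmetic. If $\rho\notin\n^*$ there is no vanishing eigenvalue, hence ${\rm Nullity}_{\rm red}=0$, while the negative eigenvalues are exactly those with $0\leq m\leq\lfloor\rho\rfloor$, contributing $1+2\lfloor\rho\rfloor$ to the index. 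If instead $\rho\in\n^*$, then $\lambda_\rho=0$ has multiplicity $2$, so ${\rm Nullity}_{\rm red}=2$, and the negative eigenvalues are those with $0\leq m\leq\rho-1$, yielding index $1+2(\rho-1)$. This reproduces exactly the two cases of the statement.

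The main obstacle is the Hessian computation itself: unlike the sphere case, the warping function $K(\alpha)$ is non-constant, so the integrand of \eqref{reduced-bienergy-generalizza-toro-ellissoide} is a genuinely nonlinear functional of $\alpha$ whose second variation produces a large number of terms. The saving feature is that we linearise at a \emph{constant} solution and need only the quadratic part; combined with the explicit evaluation $K^2(\alpha^*)=b$, this is precisely what makes the potential coefficient collapse to the clean value $C$ and lets the entire index-and-nullity problem be settled by elementary Fourier analysis on $\s^1$.
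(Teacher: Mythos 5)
Your proposal is correct and follows essentially the same route as the paper: compute the Hessian of the reduced bienergy at the constant critical point $\alpha^*$, integrate by parts to obtain the fourth-order operator \eqref{I2-equiv-explicit-ellissoide}, diagonalise it on the Fourier basis \eqref{base-equiv-case}, and count negative and vanishing eigenvalues according to whether $\sqrt[4]{4(n-1)^2/(b(b+1)^2R^4)}$ is a positive integer. The simplifications you single out ($K^2(\alpha^*)=b$, $\sin(2\alpha^*)=2\sqrt b/(b+1)$, and the recovery of the sphere case at $b=1$) are exactly the ones that make the paper's stated coefficient come out.
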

\begin{remark}\label{remark-ellissoide}
We observe that the value $\alpha^*$ in \eqref{biarmonia-ellissoide-alpha*} corresponds to the parallel proper biharmonic hypersphere in $\mathcal{Q}^n(b)$ which was found in \cite{Mont-Ratto2}. We also point out that, for fixed values of $n,R$, there exists $b^*>0$ such that if $ b \geq b^*$, then the reduced index is $1$. By contrast, the reduced index becomes arbitrarily large provided that $b$ is sufficiently small.
\end{remark}
A natural question is to ask whether variations associated to vector fields in the nullity subspace give rise to variations made of biharmonic maps. We have checked by direct substitution into \eqref{biarmoniatorosfera-generalizzato} that, in general, this is not the case. Actually, variations of this type do not even preserve the bienergy. For instance, assume $n=2$ and $R=b=1$. Then the variation
\[
\alpha_t(\vartheta)=\frac{\pi}{4}+t \sin \vartheta
\]
gives rise to a vector field in the nullity subspace and a direct computation shows that, up to a constant factor, 
\[
 \frac{d}{dt}\, E_{2,{\rm red}} (\alpha_{t})= \pi\, t -\frac{1}{2} \, \pi \, J_1(4t) \,,
\]
where $J_1$ denotes the Bessel function $J_n$ of the first type, with $n=1$ (see \cite{Bessel} for definitions and properties of Bessel functions). From this it is possible to deduce that
\begin{equation*}
\left . \frac{d^j}{dt^j}\right |_{t=0}  E_{2,{\rm red}} (\alpha_{t})=0\,,\quad \quad j=1,2,3. \qquad
\left . \frac{d^4}{dt^4}\right |_{t=0}  E_{2,{\rm red}} (\alpha_{t})=12 \pi \,.
\end{equation*}
In particular, along this direction in the nullity subspace we see that the bienergy has a local minimum.

\end{document}